\newtheorem{thm}{Theorem}[section]
\newtheorem{lem}[thm]{Lemma}
\newtheorem{kor}[thm]{Corollary}
\newtheorem{prop}[thm]{Proposition}
\newtheorem*{thma}{Theorem A}
\newtheorem*{thmb}{Theorem B}
\newtheorem*{thmc}{Theorem C}
\theoremstyle{definition}
\newtheorem{defn}[thm]{Definition}
\newtheorem{bem}[thm]{Remark}
\newtheorem*{varbem}{Remark}
\newtheorem{bems}[thm]{Remarks}
\newtheorem*{varbems}{Remarks}
\newtheorem*{notat}{Notation}
\newcommand{\mf}{\mathfrak}
\newcommand{\mc}{\mathcal}
\newcommand{\lra}{\longrightarrow}
\newcommand{\lmt}{\longmapsto}
\newcommand{\bb}{\mathbb}
\newcommand{\ssq}{\subseteq}
\newcommand{\ssnq}{\subsetneq}
\newcommand{\abs}[1]{\left\lvert#1\right\rvert}
\newcommand{\lrk}{\left(}
\newcommand{\rrk}{\right)}
\DeclareMathOperator{\im}{im}
\DeclareMathOperator{\Hom}{Hom}
\DeclareMathOperator{\Gal}{Gal}
\DeclareMathOperator{\rk}{rk}
\DeclareMathOperator{\Pic}{Pic}
\DeclareMathOperator{\Ann}{Ann}
\DeclareMathOperator{\ord}{ord}
\renewcommand{\Re}{\mathrm{Re}}
\numberwithin{equation}{section}
\newcolumntype{C}[1]{>{\centering}m{#1}}
\author{Pascal Stucky}
\title{Annihilators of the ideal class group of a cyclic extension of a global function field}
\begin{document}
\maketitle

\begin{abstract}Let $K$ be a global function field and fix a place $\infty$ of $K$. Let $L/K$ be a finite real abelian extension, i.e. a finite, abelian extension such that $\infty$ splits completely in $L$. Then we define a group of elliptic units $C_L$ in $\mc O_L^\times$ analogously to Sinnott's cyclotomic units and compute the index $[\mc O_L^\times:C_L]$. In the second part of this article, we additionally assume that $L$ is a cyclic extension of prime power degree. Then we can use the methods from Greither and Ku\v{c}era to take certain roots of these elliptic units and prove a result on the annihilation of the $p$-part of the class group of $L$.\end{abstract}
\ \\

\textbf{MS Classification:} 11R20, 11R58, (11R27, 11G09).

\section{Introduction}

In \cite{thaine1988ideal} F.~Thaine studied the relation of the ideal class group $Cl(L)$ of a totally real absolutely abelian number field $L$ and a certain group of cyclotomic units introduced by W.~Sinnott in \cite{sinnott1980stickelberger}. These can be used to produce annihilators of the $p$-Sylow subgroup $Cl(L)_p$ of the ideal class group. His methods were generalized by K.~Rubin to abelian extensions of an imaginary quadratic base field $K$ in \cite{rubin1987global}, where the cyclotomic units are replaced by elliptic units. These approaches are closely related to Kolyvagin's Euler system machinery, in fact, Rubin already works with so called \emph{special numbers} in a quite general setting and then specialises to the case of an imaginary quadratic base field. This method yields nice results when $p$ does not divide $[L:\bb Q]$ (resp. $[L:K]$), but when $p$ is a divisor of the degree of the extension, the obtained annihilation statement is not satisfying. \\

When $L/\bb Q$ is a cyclic extension of degree $p^k$, the ideal class group (considered as the Galois group of the Hilbert class field of $L$) splits into a genus part (corresponding to the extension $F_I/L$, where $F_I$ denotes the genus field of $L$) and a so-called \emph{non-genus part}. In order to study this non-genus part $(\sigma-1)Cl(L)_p$, where $\sigma$ denotes a generator of $\Gal(L/\bb Q)$, C.~Greither and R.~Ku\v{c}era extended Rubin's method in \cite{greither2004annihilators} and \cite{greither2006annihilators} to so-called \emph{semispecial numbers}. These satisfy weaker conditions but are still sufficient to produce annihilators. The source of semispecial numbers in this case are certain roots with respect to group ring valued exponents of Sinnott's cyclotomic units.\\

It was shown by D.~Burns and A.~Hayward that the annihilation result of Greither and Ku\v{c}era can also be deduced from the equivariant Tamagawa number conjecture (see \cite{burns2007explicit}), however, the proof of Greither and Ku\v{c}era is constructive whereas the method of Burns and Hayward uses abstract arguments. In particular, the explicit construction of the roots of circular units enables Greither and Ku\v{c}era to refine their method in \cite{greither2015annihilators} and weaken the conditions on $L$ to cover even more cases. They use results on Sinnott's module from \cite{greither2014linear} which are formulated in an abstract way without specifying a certain extension of number fields. Hence, these results can also be used in other cases. This is done by H.~Chapdelaine and R.~Ku\v{c}era in \cite{chapdelaine2017annihilators}, where they prove an annihilation result for a cyclic extension of an imaginary quadratic field of prime power degree. They take roots of elliptic units studied by H.~Oukhaba in \cite{oukhaba2003index} to obtain semispecial numbers and can then adapt the methods of Greither and Ku\v{c}era to this case. \\

In this article, we want to apply the methods described above to the case of global function fields. For this purpose, we explicitly construct elliptic units based on the torsion points of rank-1 sign-normalized Drinfeld modules as in \cite{hayes1985stickelberger}. As in the case of cyclotomic units (see e.g. \cite{kucera2005circular}) there are several methods to construct elliptic units in an arbitrary real abelian extension of global function fields. We use the function field version of Sinnott's cyclotomic units and are hence able to prove an index formula for this subgroup of the units of $L$ analogously to the ones in the rational case (\cite{sinnott1980stickelberger}) and in the imaginary quadratic case (\cite{oukhaba2003index}). There exist some other index formulas for elliptic units in function fields, e.g.~by L.~Yin (\cite{yin_1997},\cite{yin1997index}) who studied cyclotomic units in ray class fields in the sense of L.~Washington (\cite{washington1997introduction}) or by H.~Oukhaba (\cite{oukhaba1992groups}, \cite{oukhaba1995construction}, \cite{oukhaba1997groups}) who studied elliptic units in extensions where at most one prime ideal ramifies in $L/K$. However, there is no discussion of an index formula for a general abelian extension of global function fields known to the author. Moreover, we can use the methods of Greither and Ku\v{c}era to extract roots from the defined elliptic units and obtain an annihilation result similar to the one in the number field case.\\

The article has the following structure: For the convenience of the reader, we first present a collection of the necessary notation and state the main results afterwards. Then we introduce the elliptic units and prove an index formula for them (Sections \ref{sec:oukhabastart} - \ref{sec:oukhabaend}). This part will closely follow \cite{oukhaba2003index}. The rest of the article (Sections \ref{sec:CK} - \ref{sec:CKend}) will deal with the desired annihilation result and will have the same structure as \cite{chapdelaine2017annihilators}.
\pagebreak

\subsection{Notation and Preliminaries}\label{sec:notat1}

Let $K$ be a global function field with constant field $\bb F_q$ and let $\infty$ be a fixed place of $K$ of degree $d_\infty$. We define
\begin{itemize}
\item $\mc O_K$ is the ring of functions in $K$ which have no poles away from $\infty$,

\item $h(K)$ (resp.~$h:=h_K$) is the class number of $K$ (resp.~$\mc O_K$), i.e.~the number of elements in $\Pic(K)$ (resp.~$\Pic(\mc O_K)$). Note that $h=h(K)d_\infty$.

\item $w_\infty:=q^{d_\infty}-1$,

\item $\ord_\infty$ is the valuation at $\infty$,

\item $K_\infty$ is the completion of $K$ at $\infty$,

\item $\bb F_\infty$ is the constant field of $K_\infty$,

\item for any prime $\mf p$ of $K$ set $N\mf p:=q^{\deg(\mf p)}$. This is the order of the residue class field at $\mf p$.
\end{itemize}

An extension of $K$ is called a \emph{real} extension, if it is contained in $K_\infty$. Now let $\rho$ be a sign-normalized rank-1 Drinfeld module w.r.t. a fixed sign-function $sgn$. Then we set $K_{(1)}$ to be the extension of $K$ generated by all coefficients of $\rho_x$, $x\in \mc O_K$. Note that this extension is finite. For any integral ideal $\mf m\ssq \mc O_K$
\begin{itemize}
\item $\rho_\mf m$ is the generator of the principal ideal generated by the elements $\rho_x$ for all $x\in \mf m$,

\item $\Lambda_{\mf m}$ is the set of $\mf m$-torsion points of $\rho$,

\item $K_\mf m:=K_{(1)}(\Lambda_\mf m)$,

\item $H_\mf m$ is the maximal real subfield of $K_\mf m$ and is called the \emph{real ray class field} of $K$ modulo $\mf m$ (in particular $H=H_{(1)}$ is the \emph{real Hilbert class field} of $K$),

\item $H_{\mf m^\infty}:=\bigcup_{n\geq 1}H_{\mf m^n}$.
\end{itemize}

For any finite extension $L/K$
\begin{itemize}
\item $\mc O_L$ is the integral closure of $\mc O_K$ in $L$,

\item $\mu(L)$ is the group of roots of unity in $L$,

\item $w_L:=\abs{\mu(L)}$,

\item $h_L$ is the class number of $\mc O_L$,

\item if $\mf p\ssq \mc O_K$ is a prime ideal, then $\mf p_L$ is the product of all ideals of $\mc O_L$ above $\mf p$,

\item if $L/K$ is abelian and $\mf m$ is an integral ideal of $K$, then we set $L_\mf m=L\cap H_\mf m$.
\end{itemize}
Note that $w_K=q-1$.

\begin{bem}
It is shown in \cite[§3, §4]{hayes1985stickelberger} that
\begin{enumerate}[label=(\roman*)]
\item $w_{H_\mf m}=w_\infty$ for all $\mf m$ \cite[§3]{hayes1985stickelberger}, so $\bb F_\infty$ is the constant field of $H_\mf m$,

\item $[H_\mf m : K]=\frac{h}{w_K} \abs{(\mc O_K/\mf m)^\times}$ \cite[Eq. (3.2)]{hayes1985stickelberger} for $\mf m\neq (1)$ and $[H:K]=h$,

\item $[K_\mf m :H_\mf m]=w_\infty$ for $\mf m\neq 1$\cite[§4]{hayes1985stickelberger} and $[K_{(1)}:H]=\frac{w_\infty}{w_K}$ \cite[Cor. 4.8(2)]{hayes1985stickelberger}.

\end{enumerate}
\end{bem}

Now suppose that the extension $L/K$ is Galois and $\mf p$ is a prime of $K$. Then
\begin{itemize}
\item $D_\mf P\ssq \Gal(L/K)$ is the decomposition group of a prime $\mf P$ of $L$ above $\mf p$. If $L/K$ is abelian, this subgroup does not depend on the choice of the prime $\mf P$, hence we write $D_\mf p$ in this case.

\item $T_\mf P\ssq D_\mf P$ is the inertia subgroup. If $L/K$ is abelian we write again $T_\mf p$.

\item $(\mf P,L/K)$ (or $\sigma_\mf P$ if the extension is clear) is a lift to $\Gal(L/K)$ of the corresponding Frobenius element in $D_\mf P/T_\mf P$. These elements form a conjugacy class in $\Gal(L/K)$ which will be denoted by $(\mf p,L/K)$ (or $\sigma_\mf p$). If $L/K$ is abelian and $\mf p$ is unramified, this conjugacy class contains only one element which coincides with the Artin symbol.
\end{itemize}

For any abelian group $G$, we set
\begin{align*}
\widehat{G}:=\Hom(G, \bb C^\times)
\end{align*}
to be the group of characters of $G$. For any subset $U\ssq G$, we define
\begin{align*}
s(U):=\sum_{\sigma\in U} \sigma\in \bb Z[G].
\end{align*}
If $U$ is a subgroup of $G$, we define the associated idempotent
\begin{align*}
e_U:=\frac{1}{\abs{U}}s(U)\in \bb Q[G].
\end{align*}
To a character $\chi\in \widehat{G}$, we also assign an idempotent
\begin{align*}
e_\chi:=\frac{1}{\abs{G}}\sum_{\sigma\in G} \chi(\sigma)\sigma^{-1}\in \bb C[G].
\end{align*}
By extension of scalars with $\bb Z\ssq R\subseteq \bb C$, we can evaluate a character $\chi\in \widehat{G}$ at an element $a=\sum_{\sigma\in G} a_\sigma \sigma\in R[G]$, i.e. we set
\begin{align*}
\chi(a)=\sum_{\sigma\in G} a_\sigma\chi(\sigma)\in \bb C.
\end{align*}
Finally, for any multiplicative abelian group $A$ and a positive integer $m$, we set $A/m:=A/(A^m)$.

\subsection{Main results}

Let $L/K$ be a finite real abelian extension with Galois group $G$. Then the elliptic units $C_L$ of $L$ are essentially the norms of torsion points in $K_\mf m$ together with certain unramified units (for a precise definition see Section \ref{sec:groupell}). These form a subgroup of $\mc O_L^\times$ which has finite index given by

\begin{thma}
We get
\begin{align*}
[\mc O_L^\times:C_L]=\frac{(h_Kw_\infty)^{[L:K]-1}w_Kh_L}{w_Lh_K}\frac{\prod_\mf p [L\cap H_{\mf p^\infty}:L_{(1)}]}{[L:L_{(1)}]}\frac{[\bb Z[G]:U']}{d(L)}\ .
\end{align*}
\end{thma}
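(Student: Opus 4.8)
The overall strategy is the classical Sinnott--Oukhaba approach: reduce the computation of $[\mc O_L^\times : C_L]$ to the computation of a determinant of a $\bb Z[G]$-linear map (a ``Sinnott-type'' lattice index), and then evaluate that determinant character by character. First I would set up the ``universal'' case $L = H_\mf m$ for a suitable conductor $\mf m$ and the corresponding full group of elliptic units generated by all the torsion points $\eta_\mf m(\mf a)$ (norms of $\rho$-division values), then descend to a general real abelian $L$ by taking norms $N_{H_\mf m/L}$ and intersecting with $\mc O_L^\times$. The key algebraic input is the exact sequence relating $C_L$, the group $\mc O_L^\times$, and the Sinnott module $U'$ appearing in the statement; here $U'$ should be (a version of) the $\bb Z[G]$-submodule of $\bb Z[G]$ generated by the ``partial norm'' / distribution relations satisfied by the elliptic units, analogous to Sinnott's module in \cite{sinnott1980stickelberger}.

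The concrete steps I would carry out, in order:
\begin{enumerate}[label=(\arabic*)]
\item Define a $\bb Z[G]$-module surjection from a free (or explicitly described) module onto $C_L$ whose kernel encodes exactly the relations among the chosen generators of $C_L$; this is where the factor $[\bb Z[G]:U']$ enters. One must check the elliptic units satisfy the expected Galois-equivariance and norm-compatibility (the ``distribution relations'' for Drinfeld torsion points, due to Hayes \cite{hayes1985stickelberger}), so that $U'$ is well-defined and of finite index in $\bb Z[G]$.
\item Use the analytic class number formula for the function field $L$ together with the evaluation of the relevant $L$-functions at $s=0$ (equivalently, the Stickelberger-type / regulator computation for Drinfeld elliptic units) to relate the regulator of $C_L$ to a product of special values. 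This produces $h_L$, the unit indices $w_K, w_L, w_\infty$, and the ``genus-type'' correction factors $\prod_\mf p [L\cap H_{\mf p^\infty}:L_{(1)}]$ and $[L:L_{(1)}]$, which record how the $\infty$-place and the ramified primes interact with the subfield $L_{(1)} = L\cap H$.
\item Compute $[\mc O_L^\times : C_L]$ as the quotient of the covolume of $C_L$ by the covolume of $\mc O_L^\times$ inside $(\bigoplus_{v\mid\infty} \bb R)^{0}$ (the trace-zero hyperplane), via Sinnott's lemma comparing lattice indices and $\bb Z[G]$-module indices; this is the step that turns the regulator ratio into the stated closed form, with $d(L)$ being the order of the relevant ``defect'' group (the cokernel of an augmentation-type map, typically a power of $q-1$ or a divisor-class-number factor).
\end{enumerate}

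The main obstacle will be step (2)--(3): correctly bookkeeping the numerous ``local'' correction factors at $\infty$ and at the ramified primes. In the number-field and imaginary-quadratic cases (Sinnott, Oukhaba) these already require delicate index computations involving the decomposition groups $D_\mf p$, the inertia groups $T_\mf p$, and the behavior of roots of unity; in the function-field setting one additionally has to track the constant-field extensions ($\bb F_\infty$ versus $\bb F_q$, and the factor $w_\infty = q^{d_\infty}-1$), and the degree relations $[H_\mf m:K] = \tfrac{h}{w_K}|(\mc O_K/\mf m)^\times|$ and $[K_\mf m:H_\mf m]=w_\infty$ from the preliminaries must be fed in at exactly the right places. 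I expect the cleanest route is to prove the formula first for $L = H_\mf m$ by a direct regulator computation, and then handle the descent to arbitrary real abelian $L$ by a norm/inflation argument together with an induction on the number of ramified primes, at each stage identifying the new factor that appears with the corresponding term $[L\cap H_{\mf p^\infty}:L_{(1)}]$ in the product. The identification of $U'$ and of $d(L)$ with their analogues in \cite{oukhaba2003index} is then essentially formal, since the algebraic skeleton of the argument is the same; the content is entirely in the arithmetic of sign-normalized rank-1 Drinfeld modules replacing that of elliptic curves with CM.
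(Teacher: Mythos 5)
You have the right overall framework---the Sinnott--Oukhaba lattice-index argument, evaluated character by character via Kronecker's limit formula and the analytic class number formula---but the specific route you propose deviates from the paper in a way that would create real difficulties, and your description of how $U'$ and $d(L)$ enter is not accurate. The paper does \emph{not} prove the index formula first for $L=H_\mf m$ and then descend by taking norms; the descent you sketch would require controlling the cokernel of $N_{H_\mf m/L}\colon \mc O_{H_\mf m}^\times\to\mc O_L^\times$, a genus-theory/capitulation question of exactly the kind the approach is designed to avoid. Instead, the paper works directly with an arbitrary real abelian $L$ from the start: it introduces the logarithmic map $l_L\colon L^\times\to\bb Q[G]$ and its projection $l_L^*:=(1-e_G)l_L$, and chains together the lattice indices
\begin{align*}
[\mc O_L^\times:C_L]=\frac{[R_0:U'_0]}{[R_0:l_L(\mc O_L^\times)]}\cdot [U'_0:l_L^*(P_L)]\cdot [l_L^*(P_L):l_L(C_L)]
\end{align*}
inside $\bb Q[G]$, where $R_0\ssq\bb Z[G]$ and $U'_0\ssq U'$ are the kernels of multiplication by $s(G)$. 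The three factors are then evaluated separately: $[R_0:l_L(\mc O_L^\times)]=R_L/d_\infty^{[L:K]-1}$ from the regulator, $[R_0:U'_0]=[\bb Z[G]:U']/[L:L_{(1)}]$, $[U'_0:l_L^*(P_L)]=\abs{\det(\omega)}$ via Kronecker's limit formula (Proposition~\ref{prop:imglog}) and the class number formula (Proposition~\ref{prop:indexsinnott}), and $[l_L^*(P_L):l_L(C_L)]=\prod_\mf p[L\cap H_{\mf p^\infty}:L_{(1)}]/d(L)$ (Proposition~\ref{prop:indexlog}).

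Two further corrections. The module $U'$ is not the kernel of a presentation of $C_L$ by a free module; it is a $\bb Z[G]$-submodule of $\bb Q[G]$ (not of $\bb Z[G]$) generated by the elements $\rho'_\mf n$, and it appears because $l_L^*(P_L)=\omega\cdot U'_0$, i.e.\ up to the invertible element $\omega$ it \emph{is} the image of the elliptic numbers under the twisted logarithm. And $d(L)$ is not the order of an augmentation cokernel or a power of $q-1$: it is by definition the index $[P_L^{w_L}\cap K:Q_L^{w_L}\cap K]$ of two explicit subgroups of $K^\times$ (Definition~\ref{defn:QL}), and it measures the discrepancy between the elliptic numbers lying in $K$ and those manufactured from the generators $x_\mf p$ of the ramified primes. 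Consequently the factors $\prod_\mf p[L\cap H_{\mf p^\infty}:L_{(1)}]$, $d(L)$, and $[L:L_{(1)}]$ do not come out of the analytic class number formula step, as your outline suggests, but from the two purely algebraic index comparisons $[l_L^*(P_L):l_L(C_L)]$ and $[R_0:U'_0]$. The analytic input produces precisely $(hw_\infty/d_\infty)^{[L:K]-1}\cdot w_Kh_LR_L/(w_Lh)$, and the regulator $R_L$ then cancels against the $[R_0:l_L(\mc O_L^\times)]$ term.
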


\begin{varbems}
\begin{itemize}
\item The Sinnott module $U'\ssq \bb Q[G]$ is defined in \cite{sinnott1980stickelberger}. The Sinnott index $[\bb Z[G]:U']$ as well as the number $d(L)$ can be computed in certain cases (cf.~Remark \ref{rem:dL} and Proposition \ref{prop:RU}).

\item This index formula is the analogue of \cite[Thm. 4.1]{sinnott1980stickelberger} and \cite[Thm. 1]{oukhaba2003index}. It is proven as Theorem \ref{thm:indexformula}.
\end{itemize}
\end{varbems}

Now let $p$ be an odd prime not dividing the class number $h_K$ of $\mc O_K$, the characteristic of $K$ and the number of roots of unity $w_K$ of $K$. Suppose that $L/K$ is cyclic of degree $p^k$ for some $k>0$, then we can define $\mc C_L\ssq \mc O_L^\times$ satisfying $\mc C_L^{h_K}\cdot \mu(L)=C_L$. Let $\eta$ be a top generator of $\mc C_L$ (precisely defined in Section \ref{sec:CK}) and $\mf p_1,...,\mf p_s$ be the primes of $K$ which ramify in $L$. We assume $s\geq 2$. Let $\sigma$ be a generator of $\Gal(L/K)$. Then there is a certain subextension $K\ssq L'\subset L$ such that we get

\begin{thmb}
Define $y:=\prod_{i=2}^s (1-\sigma^{n_i})$, where $n_i$ is the index of the decomposition group of $\mf p_i$ in $\Gal(L/K)$. Then there exists a unique $\alpha\in L$ with $\eta=\alpha^y$ and $N_{L/L'}(\alpha)=1$.
\end{thmb}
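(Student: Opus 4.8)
The plan is to realize $\eta = \alpha^y$ via a telescoping argument that tracks norm-compatibility of elliptic units down the tower of subfields of $L/K$, and then to pin down $\alpha$ by the side condition $N_{L/L'}(\alpha)=1$. First I would record the structural input: since $\mf p_1$ is chosen (as is standard in the Greither--Ku\v{c}era setup) so that its decomposition group is as large as possible, the factor $1-\sigma^{n_1}$ is \emph{not} part of $y$, and $L'$ should be taken to be the fixed field inside $L$ of the cyclic group generated by $\sigma^{n_1}$ — equivalently the largest subfield of $L$ in which all of $\mf p_2,\dots,\mf p_s$ stay ``visible'' but in which $\mf p_1$ is already split. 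The key algebraic fact I would isolate is that in $\bb Z[G]$ with $G=\Gal(L/K)$ cyclic of order $p^k$, each $1-\sigma^{n_i}$ divides $1-\sigma^{p^k}=0$ only formally, but $\prod_{i=2}^s(1-\sigma^{n_i})$ is divisible by a suitable power of the augmentation-type element so that taking a $y$-th root of a norm-coherent family of units makes sense: concretely, the elliptic units $\eta$ at the various layers satisfy norm relations of the shape $N_{L/M}\eta_L = \eta_M^{\,(1-\sigma_{\mf p}^{\,f})}$ coming from the Euler-system-style distribution relations of the torsion points of the Drinfeld module, and each such factor $1-\sigma_{\mf p}^{\,f}$ is exactly one of the $1-\sigma^{n_i}$.

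Next I would construct $\alpha$ explicitly. Write $y = \prod_{i=2}^s(1-\sigma^{n_i})$ and peel off one factor at a time. Because $\eta$ is a top generator of $\mc C_L$ and $\mc C_L^{h_K}\mu(L)=C_L$ with $p\nmid h_K w_K$ and $p\neq \Char K$, the relevant $p$-parts are uniquely divisible by $h_K$ and $w_L$, so extracting the needed roots inside $L^\times$ (not merely up to roots of unity) is unobstructed. Using the norm relation for the prime $\mf p_s$ one obtains $\gamma_s\in L^\times$ with $\eta = \gamma_s^{\,1-\sigma^{n_s}}$ and $\gamma_s$ built from elliptic units of the layer where $\mf p_s$ becomes split; iterating over $\mf p_{s-1},\dots,\mf p_2$ produces $\alpha\in L^\times$ with $\eta=\alpha^y$. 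This mirrors the construction of semispecial numbers in \cite{chapdelaine2017annihilators} and \cite{greither2004annihilators}, where the same product of ``$1-\text{Frobenius}$'' factors is absorbed.

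For the norm condition: applying $N_{L/L'}$ to $\eta = \alpha^y$ and using that $y$ is (by the choice of $L'$) annihilated upon corestriction to $L'$ — more precisely that $s(\Gal(L/L'))\cdot y = 0$ in $\bb Z[G]$ because $y$ lies in the augmentation ideal relative to $\Gal(L/L')$ — gives $N_{L/L'}(\eta)=N_{L/L'}(\alpha)^{y}$, and on the other hand $N_{L/L'}(\eta)$ is itself a norm-coherent elliptic unit which one checks is a $y$-th power of $1$ in the relevant quotient; combining these forces $N_{L/L'}(\alpha)^{y}$ to be trivial, and then one adjusts $\alpha$ by an element of $\ker(x\mapsto x^y)$ lying in $\mu(L)$ (again using $p\nmid w_K$ to control this kernel) to arrange $N_{L/L'}(\alpha)=1$ exactly. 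Uniqueness follows because any two choices differ by an element killed by $y$, hence by a root of unity, which the normalization $N_{L/L'}(\alpha)=1$ then rigidifies together with $\eta=\alpha^y$ determining $\alpha$ on the complement.

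The main obstacle I anticipate is \emph{not} the formal root extraction but verifying that the pieces $\gamma_i$ produced at each step genuinely lie in $L$ (rather than in a larger field $L_{\mf m}$) and that their product is norm-coherent in the precise sense needed — this requires carefully matching the decomposition indices $n_i$ with the conductors of the elliptic units and invoking the index formula of Theorem A to control the ambiguity; the delicate bookkeeping of which layer each $\mf p_i$ splits in, and ensuring the $1-\sigma^{n_i}$ appearing in the distribution relations are literally the ones in the definition of $y$, is where the real work lies.
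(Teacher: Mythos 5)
Your proposal has a genuine gap, and also a factual error in the setup that suggests the approach has not been thought through carefully against the paper's conventions.

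First, the identification of $L'$ is wrong. With the normalization $n_1\leq\cdots\leq n_s$ used in the paper, Corollary \ref{cor:1}(iii) forces some $t_j=p^k$, i.e.\ some $\mf p_j$ is totally ramified, and after reordering $n_1=1$. The paper then defines $L'$ by $[L':K]=n$ with $n=n_s=\max_i n_i$, so that $\Gal(L/L')=\langle\sigma^{n_s}\rangle=D_s(L)$ and $\mf p_s$ (not $\mf p_1$) splits completely in $L'$. Your description of $L'$ as the fixed field of $\langle\sigma^{n_1}\rangle$ in which ``$\mf p_1$ is already split'' is doubly incorrect: $\langle\sigma^{n_1}\rangle=\langle\sigma\rangle$, whose fixed field is $K$, and $\mf p_1$ is totally ramified, hence never split in any nontrivial subextension.

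More seriously, the central device you propose --- peeling off factors $1-\sigma^{n_i}$ one at a time via the distribution/norm relations --- does not do what you need. The relations of Proposition \ref{prop:normrel} and \eqref{eq:normrel1} have the shape $N_{F_J/F_{J\setminus\{j\}}}(\eta_J)=\eta_{J\setminus\{j\}}^{\,1-\sigma_j^{-1}}$: they express the \emph{norm of a unit one level up} as a $(1-\sigma)$-power of a unit at the lower level. They do not assert that $\eta$ itself (or any $\eta_{J}$) is a $(1-\sigma)$-power of something living in $L$. Producing a $\gamma_s\in L^\times$ with $\eta=\gamma_s^{\,1-\sigma^{n_s}}$ is precisely the nontrivial content of the theorem; it is not handed to you by the Euler-system relations. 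The paper resolves this (following \cite[Thm.~4.2]{chapdelaine2017annihilators}) by transporting the problem into the modified Sinnott module $U$ of \cite{greither2014linear} via the isomorphism $\Psi$ of Lemma \ref{lem:psi}: one exhibits the required $y$-th root of $\Psi(\eta_I)=\rho_\emptyset$ inside $U^B$ (where $B=\Gal(F_I/L)$) by an explicit module-theoretic computation with the generators $\rho_J$, and then uses Lemma \ref{lem:9}, which identifies $\Psi(\mc P_{F_I})^B$ with $\Psi(\mc P_{F_I}\cap L)$, to pull the root back to an actual elliptic unit $\alpha\in L$. Uniqueness and the normalization $N_{L/L'}(\alpha)=1$ then come from the injectivity of $\Psi$ modulo $\mu(K)$ and the structure of the relevant $\bb Z[\Gamma]$-module, not from an ad hoc adjustment by roots of unity. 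You have correctly flagged that the real difficulty is ``verifying that the pieces $\gamma_i$ genuinely lie in $L$,'' but the telescoping strategy you propose offers no mechanism to address it; that mechanism is exactly the combinatorics of Sinnott's module together with the descent criterion of Lemma \ref{lem:9}.
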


\begin{varbems}
\begin{itemize}
\item The element $\alpha$ from Theorem B is a semispecial number in the sense of Definition \ref{defn:semispecial} (cf. Theorem \ref{thm:19}).

\item This result is the analogue of \cite[Thm. 1.2]{greither2015annihilators} and \cite[4.2]{chapdelaine2017annihilators}. The field $L'$ is defined right before Theorem \ref{thm:rootofunit}.
\end{itemize}
\end{varbems}

Now we can extend $\mc C_L$ by $\alpha_j$ (taking a root for each subextension $K\ssq L_j\ssq L$) to obtain $\overline{\mc C_L}$. In this special case, the index formula from Theorem A simplifies significantly and we obtain
\begin{align*}
[\mc O_L^\times:\overline{\mc C_L}]=w_\infty^{p^k-1}\cdot \frac{h_L}{h_K}\cdot \varphi_L^{-1}
\end{align*}
for a certain $p$-power $\varphi_L$ (cf. Theorem \ref{thm:enlarged}). In particular, $[\overline{\mc C_L}:C_L]=p^\nu$ where $\nu$ is determined by the $n_i$ (also see Theorem \ref{thm:enlarged}). Our main result then reads

\begin{thmc}
There exists a certain number $0\leq r<k$ such that
\begin{align*}
\Ann_{\bb Z[\Gal(L/K)]}((\mc O_L^\times/\overline{\mc C_L})_p)\ssq \Ann_{\bb Z[\Gal(L/K)]}((1-\sigma^{p^r})Cl(\mc O_L)_p)\ .
\end{align*}
\end{thmc}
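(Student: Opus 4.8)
The plan is to adapt the Euler-system / Thaine-type argument of Greither--Ku\v{c}era and Chapdelaine--Ku\v{c}era to the function-field setting, using the semispecial number $\alpha$ produced by Theorem B as the source of annihilators. First I would recall the abstract machinery: by Theorem \ref{thm:19} the element $\alpha$ (and more generally the family $\alpha_j$ attached to the subextensions $L_j$) is a semispecial number in the sense of Definition \ref{defn:semispecial}. The starting point is then the standard observation that for a Kummer-type extension and a prime $\mf q$ of $K$ that splits completely in $L$ and is chosen so that Frobenius at $\mf q$ hits a prescribed element of $\Gal$, the image of $\alpha$ under the reduction map at a prime above $\mf q$ controls an ideal class; applying this for cleverly chosen auxiliary primes (a Chebotarev argument, available in the function-field case) lets one move annihilators of $(\mc O_L^\times/\overline{\mc C_L})_p$ to annihilators of a piece of $Cl(\mc O_L)_p$. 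This is exactly where the enlarged unit group $\overline{\mc C_L}$ and its index computation (Theorem \ref{thm:enlarged}) enter: the index $[\mc O_L^\times:\overline{\mc C_L}]_p = h_L/h_K$ up to the explicit $p$-power $\varphi_L$ means the module of elliptic units is ``large enough'' to see the whole relevant part of the class group.

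The key steps, in order, would be: (1) Set up the Kolyvagin-style derivative classes $\kappa_{\mf q}(\alpha_j)$ attached to the semispecial numbers and squarefree products $\mf q$ of auxiliary primes, and verify the congruence/recursion relations they satisfy modulo $p^k$ (the function-field analogue of the relations in \cite{chapdelaine2017annihilators}, which in turn rest on the abstract Sinnott-module results of \cite{greither2014linear}). (2) Run the induction on the number of auxiliary primes: given an annihilator $\theta \in \bb Z[G]$ of $(\mc O_L^\times/\overline{\mc C_L})_p$, produce, via Chebotarev and the behaviour of $\alpha$ under norm maps $N_{L/L'}$ (here $N_{L/L'}(\alpha)=1$ from Theorem B is crucial, it is precisely the ``semispecial'' defect that still permits the argument), a class $c$ whose order is governed by $\theta$ and which generates the needed quotient of $Cl(\mc O_L)_p$. (3) Track carefully the exact power $p^r$ with $0\le r<k$ that appears: it comes from the discrepancy between $\eta$ and its root $\alpha$ (the exponent $y=\prod_{i\ge 2}(1-\sigma^{n_i})$) together with the $p$-adic valuations of the $n_i$ and of $\varphi_L$, so $(1-\sigma^{p^r})$ is the largest ``Jacobian-type'' operator for which the semispecial numbers give full information; I would isolate $r$ as $\min$ over the relevant $v_p$ contributions, matching the number-field formula in \cite{greither2015annihilators}.

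The main obstacle I expect is step (1) together with the bookkeeping in step (3): one must show that \emph{all} the combinatorial and cohomological input from \cite{greither2014linear} and \cite{greither2015annihilators}--\cite{chapdelaine2017annihilators} transfers verbatim to global function fields. Concretely, the delicate points are (a) that the ``$n_i$'' (indices of decomposition groups, which replace ramification indices here because $\infty$ splits and the $\mf p_i$ are the ramified primes) play the role of the $e_i$ in the number-field arguments, and that the relevant valuations $v_p(n_i)$ behave as needed; (b) that the reduction-of-units map and the Chebotarev density theorem for the (infinite, in the Drinfeld sense) extensions $H_{\mf m^\infty}$ are available with the required uniformity; and (c) that the hypothesis $s\ge 2$ and the choice of $L'$ are exactly what is needed to make $N_{L/L'}(\alpha)=1$ compatible with the descent. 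Once these are in place, the passage from $\Ann((\mc O_L^\times/\overline{\mc C_L})_p)$ to $\Ann((1-\sigma^{p^r})Cl(\mc O_L)_p)$ follows the same formal pattern as Theorem 4.2 (and its proof) in \cite{chapdelaine2017annihilators}, so I would structure the section to first prove the abstract descent lemma and then feed in the function-field data assembled in the earlier sections.
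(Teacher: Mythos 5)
Your proposal correctly identifies the same Thaine/Rubin-style strategy that the paper follows: the semispecial number $\alpha$ from Theorem B, a Chebotarev density argument to produce auxiliary primes $\mf q\in\mc Q_m$ with prescribed splitting and reduction behaviour, Rubin's theorem on annihilating ideal classes via the reduction map, and the resulting descent from $\Ann((\mc O_L^\times/\overline{\mc C_L})_p)$ to annihilators of $(1-\sigma^{p^r})Cl(\mc O_L)_p$, all fed into the Chapdelaine--Ku\v{c}era framework. The only imprecision worth flagging is your identification of $r$: it is not a loose ``$\min$ over $v_p$ contributions'' from the root $\alpha$ of $\eta$, but is pinned down combinatorially as the highest jump in the sequence $\mu_i=n_{\max M_i}$, equivalently $p^{k-r}=\max\{t_j\mid n_j=n_s\}$; this exact description comes out of the $\bb Z$-basis of $\overline{\mc C_L}/\mu(K)$ built from the jump structure, which your sketch does not isolate.
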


\begin{varbem}
This is the analogue of \cite[Thm. 5.3]{greither2015annihilators} and \cite[Thm. 7.5]{chapdelaine2017annihilators}. The number $r$ has a concrete description given in Theorem \ref{thm:main}.
\end{varbem}

\subsection{Acknowledgements}

First I want to thank W.~Bley for introducing me to this topic and for his support while working on this article. I am also very grateful for many discussions with M.~Hofer during this project. Moreover, I want to thank H.~Oukhaba and R.~Ku\v{c}era for answering my questions.

\section{Elliptic units in global function fields}\label{sec:oukhabastart}

Let $\Omega$ be the completion of the algebraic closure of $K_\infty$ and let $\Gamma$ be a lattice in $\Omega$, i.e. a finitely generated projective $\mc O_K$-module. The \emph{exponential function} associated to $\Gamma$ is defined by
\begin{align*}
e_\Gamma:\Omega&\lra \Omega\\
z&\lmt z\prod_{\substack{\gamma\in \Gamma\\ \gamma\neq 0}} \lrk 1-\frac{z}{\gamma}\rrk.
\end{align*}
We say that $\Gamma$ is \emph{special}, if the rank-1 Drinfeld module associated to $\Gamma$ (see \linebreak \cite[§5]{hayes1985stickelberger}) is sign-normalized w.r.t. the fixed sign-function $sgn$. For each $\Gamma$, there exists an invariant $\xi(\Gamma)\in \Omega^\times$ such that $\xi(\Gamma)\Gamma$ is special. This invariant is unique up to multiplication by an element of $\bb F_\infty$. 

\subsection{Unramified elliptic units}

Following \cite[Sec. 2]{oukhaba1997groups}, we can fix a fractional ideal $\mf c$ of $K$ and a choice of the invariant $\xi(\mf c)$ such that the sign-normalized rank-1 Drinfeld module associated to $\Gamma:=\xi(\mf c)\mf c$ is exactly $\rho$. Let $D$ be the differential of the twisted polynomial ring (see e.g. \cite[§4]{hayes1985stickelberger}). Then for any non-zero integral ideal $\mf a$ of $K$, the rank-1 Drinfeld module associated to $D(\rho_\mf a)\mf a^{-1}\Gamma$ is sign-normalized w.r.t $sgn$, hence we can choose
$\xi(\mf a^{-1}\mf c)=D(\rho_\mf a)\xi(\mf c)$. Any fractional ideal of $K$ is of the form $\mf d=\mf a\mf b^{-1}\mf c$ and setting $\tau:=(\mf d^{-1}\mf c, K_{(1)}/K)$ we can define
\begin{align*}
\xi(\mf d)=\frac{D(\rho_\mf b)}{D(\rho_\mf a)^\tau}\xi(\mf c).
\end{align*}

\begin{lem}
The element $\xi(\mf d)$ is well-defined, i.e. it is independent of the choice of $\mf a$ and $\mf b$. It depends on the choice of $\mf c$ and $\xi(\mf c)$.
\end{lem}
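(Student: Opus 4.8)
The plan is to show that the two candidate choices for $\xi(\mf d)$ arising from two factorizations $\mf d = \mf a_1\mf b_1^{-1}\mf c = \mf a_2\mf b_2^{-1}\mf c$ coincide. First I would reduce to a common refinement: since $\mf a_1\mf b_2 = \mf a_2\mf b_1$ as integral ideals (both equal $\mf d\mf c^{-1}\mf b_1\mf b_2$, which is integral after clearing denominators), it suffices to compare the expression $\frac{D(\rho_\mf b)}{D(\rho_\mf a)^\tau}\xi(\mf c)$ for $(\mf a,\mf b)$ and for $(\mf a\mf e,\mf b\mf e)$ for an arbitrary integral ideal $\mf e$, because any two factorizations are connected by a chain of such moves. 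Note $\tau = (\mf d^{-1}\mf c, K_{(1)}/K)$ does not change under $(\mf a,\mf b)\mapsto(\mf a\mf e,\mf b\mf e)$ since $\mf d$ is unchanged. So the crux is the multiplicativity-type identity
\begin{align*}
\frac{D(\rho_{\mf b\mf e})}{D(\rho_{\mf a\mf e})^\tau} = \frac{D(\rho_\mf b)}{D(\rho_\mf a)^\tau}.
\end{align*}

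To establish this I would use the basic transformation law for $D$ applied to products of ideals, which comes from the construction in \cite[§4]{hayes1985stickelberger}: for integral ideals $\mf a,\mf e$ one has a relation of the shape $D(\rho_{\mf a\mf e}) = D(\rho_\mf a)\cdot D(\rho_\mf e)^{(\mf a,K_{(1)}/K)}$ (up to the $\bb F_\infty$-ambiguity inherent in $\xi$, which is harmless since the claim is about the particular normalized choices fixed just above). Substituting this for both numerator and denominator, the identity to check becomes
\begin{align*}
\frac{D(\rho_\mf b)\,D(\rho_\mf e)^{(\mf b,K_{(1)}/K)}}{\bigl(D(\rho_\mf a)\,D(\rho_\mf e)^{(\mf a,K_{(1)}/K)}\bigr)^\tau} \overset{?}{=} \frac{D(\rho_\mf b)}{D(\rho_\mf a)^\tau},
\end{align*}
i.e. $D(\rho_\mf e)^{(\mf b,K_{(1)}/K)} = D(\rho_\mf e)^{\tau(\mf a,K_{(1)}/K)}$. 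This reduces to the Artin-symbol identity $(\mf b,K_{(1)}/K) = \tau\,(\mf a,K_{(1)}/K)$ in $\Gal(K_{(1)}/K)$, which holds because $\tau = (\mf d^{-1}\mf c,K_{(1)}/K) = (\mf b\mf a^{-1},K_{(1)}/K)$ and the Artin map is a homomorphism on fractional ideals prime to the conductor. (A short remark is needed that one may choose representatives $\mf a,\mf b$ prime to the relevant ramified primes, or alternatively observe that $D(\rho_\mf a)$ for non-coprime $\mf a$ is still well-defined via the principal-ideal description $\rho_\mf a = $ generator of $(\rho_x : x\in\mf a)$.) The final sentence, that $\xi(\mf d)$ does depend on $\mf c$ and $\xi(\mf c)$, is immediate from the formula: rescaling $\xi(\mf c)$ by $c\in\bb F_\infty^\times$ rescales $\xi(\mf d)$ by the same $c$, and changing $\mf c$ changes the reference Drinfeld module.

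The main obstacle I anticipate is bookkeeping the $\bb F_\infty^\times$-indeterminacy of the invariants $\xi$ and making sure the stated identity for $D$ on products of ideals is used in the precisely normalized form fixed in the paragraph preceding the lemma (where $\xi(\mf a^{-1}\mf c) = D(\rho_\mf a)\xi(\mf c)$ pins down $D(\rho_\mf a)$ up to $\bb F_\infty^\times$ only). In other words, the content of the lemma is exactly that all the $\bb F_\infty^\times$-scalars cancel once one commits to the chosen $\xi(\mf c)$; I would make this cancellation explicit rather than wave it away, since that is the only genuinely subtle point — everything else is the cocycle/homomorphism property of the Artin symbol together with multiplicativity of $\rho_\bullet$ and $D$.
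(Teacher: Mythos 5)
Your proof is correct and uses the same essential ingredients as the paper's: the multiplicativity $\rho_{\mf m\mf n}=\rho_\mf n^{\sigma_\mf m}\rho_\mf m$ of the ideal-class-group action on $\rho$ (Rosen, Prop.~13.15), passed through $D$, together with multiplicativity of the Artin symbol. The paper organizes the computation by applying $\sigma_{\mf a\mf a'}$ to both candidate expressions and showing they become equal, whereas you show invariance under the single move $(\mf a,\mf b)\mapsto(\mf a\mf e,\mf b\mf e)$ and pass to the common refinement $(\mf a_1\mf a_2,\,\mf b_1\mf a_2)=(\mf a_2\mf a_1,\,\mf b_2\mf a_1)$; this is a mild and arguably cleaner repackaging of the identical calculation, not a genuinely different argument.
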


\begin{proof}
Suppose that $\mf d=\mf a\mf b^{-1}\mf c=\mf a'\mf b'^{-1}\mf c$. This implies $\mf a\mf b'=\mf a'\mf b$ and hence 
\begin{align*}
\rho_{\mf a\mf b'}=\rho_{\mf a'\mf b}.
\end{align*}
The ideal class group acts on the set of isomorphism classes of rank-1 Drinfeld modules and via this action we obtain (cf.~\cite[Prop. 13.15]{rosen2013number})
\begin{align*}
\rho_{\mf a\mf b'}\rho_{\mf a'}^{\sigma_{\mf a\mf b'}}&=\mf \rho_{\mf a\mf a'\mf b'}=\rho_{\mf b'}^{\sigma_{\mf a\mf a'}}\mf \rho_{\mf a\mf a'},\\
\rho_{\mf a'\mf b}\rho_{\mf a}^{\sigma_{\mf a'\mf b}}&=\mf \rho_{\mf a\mf a'\mf b}=\rho_{\mf b}^{\sigma_{\mf a\mf a'}}\mf \rho_{\mf a\mf a'}.
\end{align*}
Since $\mf a\mf a'\neq 0$ (we only consider nonzero ideals), we have $D(\rho_{\mf a\mf a'})\neq 0$. Because of $\sigma_{\mf a\mf b'}=\sigma_{\mf a'\mf b}=\tau\sigma_{\mf a\mf a'}$, we get
\begin{align*}
\lrk\frac{D(\rho_{\mf b})}{D(\rho_{\mf a})^\tau}\rrk^{\sigma_{\mf a\mf a'}}&=\frac{D(\rho_\mf b^{\sigma_{\mf a\mf a'}})}{D(\rho_{\mf a}^{\sigma_{\mf a'\mf b}})}=\frac{D(\rho_{\mf a'\mf b})}{D(\rho_{\mf a\mf a'})}\\
&=\frac{D(\rho_{\mf a\mf b'})}{D(\rho_{\mf a\mf a'})}=\frac{D(\rho_{\mf b'}^{\sigma_{\mf a\mf a'}})}{D(\rho_{\mf a'}^{\sigma_{\mf a\mf b'}})}=\lrk\frac{D(\rho_{\mf b'})}{D(\rho_{\mf a'})^\tau}\rrk^{\sigma_{\mf a\mf a'}}.
\end{align*}
\end{proof}

With these definitions, we obtain the following explicit form of the principal ideal theorem

\begin{lem}[{\cite[Lemma 3]{oukhaba1997groups}}]\label{lem:ouk}
Let $\mf d_1,\mf d_2$ and $\mf d$ be fractional ideals of $K$. Then the ideal $\mf d_2\mf d_1^{-1}\mc O_{K_{(1)}}$ is principal generated by $\xi(\mf d_1)/\xi(\mf d_2)$. Moreover, we have
\begin{align*}
\lrk \frac{\xi(\mf d_1)}{\xi(\mf d_2)}\rrk^{(\mf d,K_{(1)}/K)}=\frac{\xi(\mf d_1\mf d^{-1})}{\xi(\mf d_2\mf d^{-1})}.
\end{align*}
\end{lem}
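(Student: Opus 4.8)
The plan is to deduce both statements from two facts of Hayes's theory (\cite{hayes1985stickelberger}) concerning the differentials $D(\rho_\mf a)$. The first is the \emph{capitulation statement}: for every nonzero integral ideal $\mf a$ of $\mc O_K$ the ideal $D(\rho_\mf a)\mc O_{K_{(1)}}$ equals $\mf a\mc O_{K_{(1)}}$ (this is the ``principal ideal theorem'' that gives the lemma its name, and it is the reason $\xi(\mf a^{-1}\mf c)$ was set equal to $D(\rho_\mf a)\xi(\mf c)$). The second is the multiplicativity $D(\rho_{\mf a\mf b})=D(\rho_\mf a)^{(\mf b,K_{(1)}/K)}D(\rho_\mf b)$, coming from the factorisation of $\rho_{\mf a\mf b}$ as a composite of the isogeny $\rho_\mf b$ with a Galois conjugate of $\rho_\mf a$, together with the multiplicativity of the differential $D$ on the twisted polynomial ring. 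Because $\mf a\mf b=\mf b\mf a$, this second fact yields the symmetry relation
\[
\frac{D(\rho_\mf a)^{(\mf b,K_{(1)}/K)}}{D(\rho_\mf a)}=\frac{D(\rho_\mf b)^{(\mf a,K_{(1)}/K)}}{D(\rho_\mf b)},
\]
which will do the real work in the second half. I will also use throughout that $\mf a\mc O_{K_{(1)}}$, being extended from $K$, is $\Gal(K_{(1)}/K)$-stable, so that every Galois conjugate of $D(\rho_\mf a)$ again generates $\mf a\mc O_{K_{(1)}}$.

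For the first assertion I would first reduce to one ideal. Since $\frac{\xi(\mf d_1)}{\xi(\mf d_2)}=\bigl(\frac{\xi(\mf d_1)}{\xi(\mf c)}\bigr)\bigl(\frac{\xi(\mf d_2)}{\xi(\mf c)}\bigr)^{-1}$ and $\mf d_2\mf d_1^{-1}=(\mf c\mf d_1^{-1})(\mf d_2\mf c^{-1})$, it suffices to show that $\xi(\mf d)/\xi(\mf c)$ generates $\mf c\mf d^{-1}\mc O_{K_{(1)}}$ for a single fractional ideal $\mf d$. Writing $\mf d=\mf a\mf b^{-1}\mf c$ with $\mf a,\mf b$ nonzero integral, we have $\xi(\mf d)/\xi(\mf c)=D(\rho_\mf b)/D(\rho_\mf a)^\tau$ with $\tau=(\mf d^{-1}\mf c,K_{(1)}/K)$; passing to the generated fractional ideal of $\mc O_{K_{(1)}}$ and applying the capitulation statement and the Galois-stability of $\mf a\mc O_{K_{(1)}}$, this ideal is $\mf b\mf a^{-1}\mc O_{K_{(1)}}=\mf c\mf d^{-1}\mc O_{K_{(1)}}$.

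For the ``moreover'' part put $g(\mf e):=\xi(\mf e)/\xi(\mf c)\in K_{(1)}^\times$ and $\sigma_\mf e:=(\mf e,K_{(1)}/K)$. By the same bookkeeping the asserted equality is equivalent to: the ratio $g(\mf e)^{\sigma_\mf d}/g(\mf e\mf d^{-1})$ is independent of $\mf e$, the two sides of the lemma being its values at $\mf e=\mf d_1$ and $\mf e=\mf d_2$. To prove this I would write $\mf e=\mf a\mf b^{-1}\mf c$ and $\mf d=\mf d_1\mf d_2^{-1}$ with all four ideals nonzero integral, so $\mf e\mf d^{-1}=(\mf a\mf d_2)(\mf b\mf d_1)^{-1}\mf c$, expand $g(\mf e)^{\sigma_\mf d}$ and $g(\mf e\mf d^{-1})$ through the defining formula for $\xi$, split $D(\rho_{\mf b\mf d_1})$ and $D(\rho_{\mf a\mf d_2})$ by the multiplicativity formula, and commute the Artin symbols that appear (here the abelian-ness of $\Gal(K_{(1)}/K)$ enters). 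After one further use of the symmetry relation all terms carrying $\mf a$ or $\mf b$ cancel, leaving $g(\mf e)^{\sigma_\mf d}/g(\mf e\mf d^{-1})=D(\rho_{\mf d_2})^{\sigma_\mf d}/D(\rho_{\mf d_1})$, which is free of $\mf e$ (and, by another short computation using abelian-ness, also independent of the factorisation $\mf d=\mf d_1\mf d_2^{-1}$, as it must be). Specialising to $\mf e=\mf d_1,\mf d_2$ and dividing gives the lemma.

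The one genuinely delicate point is that last cancellation: the symmetry relation has to be applied to exactly the right pairs of ideals in the right order, and it is easy to lose track. A structurally cleaner but weaker route would be to note, from the first assertion, that $\bigl(\frac{\xi(\mf d_1)}{\xi(\mf d_2)}\bigr)^{\sigma_\mf d}$ and $\frac{\xi(\mf d_1\mf d^{-1})}{\xi(\mf d_2\mf d^{-1})}$ generate the \emph{same} fractional ideal $\mf d_2\mf d_1^{-1}\mc O_{K_{(1)}}$, hence differ by a unit of $\mc O_{K_{(1)}}$, i.e.\ by a nonzero constant; but pinning that constant to $1$ again forces one to unwind the sign-normalisation of $\rho$, so the direct computation above is the more efficient path.
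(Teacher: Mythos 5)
The paper does not prove this lemma; it states it with a direct citation to Oukhaba's Lemma 3 in \cite{oukhaba1997groups}, so there is no in-text argument for your proof to be compared against. Your from-scratch argument is correct. The two facts from Hayes's theory you isolate --- that $D(\rho_{\mf a})$ generates the extended ideal $\mf a\mc O_{K_{(1)}}$, and the multiplicativity $D(\rho_{\mf a\mf b})=D(\rho_{\mf a})^{(\mf b,K_{(1)}/K)}D(\rho_{\mf b})$ with its symmetry consequence, which is precisely the identity the paper uses to prove the preceding well-definedness lemma --- are the right inputs. I have checked that the reduction of the first assertion to the single ideal $\mf d$, and the reduction of the ``moreover'' clause to the $\mf e$-independence of $g(\mf e)^{\sigma_{\mf d}}/g(\mf e\mf d^{-1})$, both go through; after one application of the symmetry relation that ratio indeed collapses to $D(\rho_{\mf d_2})^{\sigma_{\mf d}}/D(\rho_{\mf d_1})$ (for a chosen integral factorisation $\mf d=\mf d_1\mf d_2^{-1}$), which is free of $\mf e$, and specialising $\mf e$ to the two ideals in the lemma gives the claim. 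One small inaccuracy in the exposition: the capitulation fact is not the reason the paper \emph{defines} $\xi(\mf a^{-1}\mf c)=D(\rho_{\mf a})\xi(\mf c)$; that definition comes from identifying the lattice of the isogenous module $\mf a*\rho$. The capitulation fact is, however, exactly the ingredient needed for the first assertion, so this misattribution does not affect the correctness of the argument. Since the citation is stable, there is nothing in the paper that your proof diverges from.
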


Now let $\sigma\in \Gal(H/K)$ be arbitrary and let $\mf a\ssq \mc O_K$ be such that $(\mf a^{-1},H/K)=\sigma$. Let $x\in \mc O_K$ be a generator of the principal ideal $\mf a^{h}$, then we can define
\begin{align*}
\partial(\sigma):=\lrk x\xi(\mf a)^{h}\rrk^{w_\infty/w_K}.
\end{align*}
\begin{bem}\label{rem:w_K}
\begin{enumerate}[label=(\roman*)]
\item The element $\partial(\sigma)^{w_K}$ is well-defined, i.e. it is independent of the choice of $\mf a$ and $x$. Indeed, it is even independent of the choice of $\mf c$ and $\xi(\mf c)$: If $\mf c'$ and $\xi'(\mf c')$ was used to define invariants $\xi'(\mf d)$ for any fractional ideal $\mf d$, then $\xi'(\mf d)\mf d$ would again correspond to a sign-normalized rank-1 Drinfeld module. Since these lattices only differ by an element of $\mu(H)$ (see e.g. \cite[Sec.~2]{oukhaba1997groups}), we obtain $\xi(\mf d)=\zeta\xi'(\mf d)$ for some $\zeta\in \mu(H)$. Taking the $w_\infty$-th power kills the root of unity, so the element $\partial(\sigma)^{w_K}$ will be the same.

\item The above definition differs from the one given in \cite{oukhaba1997groups} by the factor $1/w_K$ in the exponent. This definition of $\partial(\sigma)$ still depends on the choice of the generator $x$ and of the ideal $\mf c$ and $\xi(\mf c)$. However, two different choices only differ by an element of $\mu(K)$. Since we are only interested in subgroups of the units containing $\mu(K)$, it suffices to define $\partial(\sigma)$ "up to roots of unity".
\end{enumerate}
\end{bem}

\begin{lem}
Let $\sigma,\sigma_1,\sigma_2\in \Gal(H/K)$. Then $\frac{\partial(\sigma_1)}{\partial(\sigma_2)}\in \mc O_{H}^\times$ and
\begin{align*}
\lrk \frac{\partial(\sigma_1)}{\partial(\sigma_2)}\rrk^\sigma=\frac{\partial(\sigma_1\sigma)}{\partial(\sigma_2\sigma)}\ .
\end{align*}
\end{lem}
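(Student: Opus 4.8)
The statement has two parts: first, that the ratio $\partial(\sigma_1)/\partial(\sigma_2)$ is a unit in $\mc O_H^\times$; second, the Galois-equivariance formula $\bigl(\partial(\sigma_1)/\partial(\sigma_2)\bigr)^\sigma = \partial(\sigma_1\sigma)/\partial(\sigma_2\sigma)$. The key input for both is Lemma \ref{lem:ouk} (the explicit principal ideal theorem), applied in the form that $\mf d_2 \mf d_1^{-1}\mc O_{K_{(1)}}$ is generated by $\xi(\mf d_1)/\xi(\mf d_2)$ together with its transformation rule. First I would unwind the definition: choosing $\mf a_i \ssq \mc O_K$ with $(\mf a_i^{-1},H/K)=\sigma_i$ and a generator $x_i$ of $\mf a_i^{h}$, we have $\partial(\sigma_i) = (x_i\,\xi(\mf a_i)^{h})^{w_\infty/w_K}$. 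The plan is to show directly that $\partial(\sigma_1)/\partial(\sigma_2)$ generates the unit ideal in $\mc O_{K_{(1)}}$, and then descend to $H$.

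\emph{Unit property.} I would compute the ideal generated by $\partial(\sigma_1)/\partial(\sigma_2)$ in $\mc O_{K_{(1)}}$. Since $x_i$ generates $\mf a_i^{h}\mc O_K$, the element $x_i\,\xi(\mf a_i)^{h}$ generates $\mf a_i^{h}\cdot(\xi(\mf a_i)\mc O_{K_{(1)}})^{h}$. By Lemma \ref{lem:ouk} applied with $\mf d_1 = \mf a_i$ and $\mf d_2 = \mc O_K$ (so that $\xi(\mc O_K)/\xi(\mf a_i)$ — or rather $\xi(\mf a_i)/\xi(\mc O_K)$ — generates the appropriate ideal), the ideal $\xi(\mf a_i)\mc O_{K_{(1)}}$ is, up to the principal ideal generated by the fixed invariant $\xi(\mc O_K)$, equal to $\mf a_i^{-1}\mc O_{K_{(1)}}$. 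Therefore $\xi(\mf a_i)^h\mc O_{K_{(1)}}$ cancels $\mf a_i^h$ exactly (up to a principal factor independent of $i$), so $x_i\,\xi(\mf a_i)^{h}\mc O_{K_{(1)}}$ is a fixed principal ideal not depending on the choice of $\sigma_i$. Hence $\partial(\sigma_1)/\partial(\sigma_2)$ generates $\mc O_{K_{(1)}}$, i.e. it is a unit there. That it actually lies in $\mc O_H^\times$ follows because $\partial(\sigma_i)^{w_K}$ is already defined "up to roots of unity" over $H$ (Remark \ref{rem:w_K}), and more precisely the ratio is fixed by $\Gal(K_{(1)}/H)$: this Galois group acts on the $\xi$-invariants through multiplication by $\bb F_\infty^\times$-scalars (cf.\ the discussion in Remark \ref{rem:w_K}(i)), and these scalars are killed by the exponent $w_\infty/w_K$ combined with the $h$-th power, so the ratio descends.

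\emph{Equivariance.} For the transformation formula, pick $\mf b \ssq \mc O_K$ with $(\mf b^{-1},H/K)=\sigma$, and note that $(\mf a_i^{-1}\mf b^{-1},H/K) = \sigma_i\sigma$, so $\mf a_i\mf b$ is an admissible choice for computing $\partial(\sigma_i\sigma)$. Lift $\sigma$ to an element of $\Gal(K_{(1)}/K)$ that restricts to $(\mf b^{-1},K_{(1)}/K)$. Applying $\sigma$ to $\partial(\sigma_i) = (x_i\,\xi(\mf a_i)^h)^{w_\infty/w_K}$, the rational number $x_i$ is fixed, and the transformation rule in Lemma \ref{lem:ouk} gives $\xi(\mf a_i)^\sigma$ in terms of $\xi(\mf a_i\mf b^{-1})$ versus $\xi$-invariants — more precisely, $\xi(\mf a_i)^\sigma / \xi(\mc O_K)^\sigma = \xi(\mf a_i\mf b^{-1})/\xi(\mf b^{-1})$ up to the ambiguity. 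Rewriting and multiplying by the generator of $(\mf a_i\mf b)^h$ in terms of $x_i$ and a generator of $\mf b^h$, the common (i.e.\ $i$-independent) factors cancel in the ratio, leaving exactly $\partial(\sigma_1\sigma)/\partial(\sigma_2\sigma)$. The only subtlety is bookkeeping the $\bb F_\infty^\times$-ambiguities in the $\xi$'s and the choices of generators; as in the first part, these are all $\mu(H)$- or $\mu(K)$-valued and are annihilated after taking the $(w_\infty/w_K)$-th power together with the $h$-th powers, so they do not affect the identity.

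\emph{Main obstacle.} I expect the genuinely delicate point to be the careful tracking of the various roots-of-unity ambiguities — both the $\bb F_\infty^\times$-scale ambiguity in $\xi(\mf d)$ and the ambiguity in the choice of the generator $x_i$ of $\mf a_i^h$ — and verifying that the normalization exponent $w_\infty/w_K$ (which, as Remark \ref{rem:w_K}(ii) stresses, differs from Oukhaba's convention) is exactly what is needed to make both the unit property over $H$ (rather than merely over $K_{(1)}$) and the clean equivariance hold, rather than only "up to roots of unity". Once the correct interpretation of $\partial$ "up to $\mu(K)$" is fixed, both assertions reduce to Lemma \ref{lem:ouk} and its transformation rule; the rest is formal.
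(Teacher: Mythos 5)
Your argument for why the ratio lies in $\mc O_{K_{(1)}}^\times$ is correct and matches what the paper's one-line proof implies: applying Lemma \ref{lem:ouk} with $\mf d_1=\mf a_i$, $\mf d_2=\mc O_K$ shows $x_i\xi(\mf a_i)^h\mc O_{K_{(1)}}=\xi(\mc O_K)^h\mc O_{K_{(1)}}$ independently of $i$. Likewise the equivariance computation, though you have a small slip in the direction of the ideals (with $\mf d=\mf b^{-1}$ and $\sigma=(\mf b^{-1},K_{(1)}/K)$, the transformation rule yields $\bigl(\xi(\mf a_i)/\xi(\mc O_K)\bigr)^\sigma=\xi(\mf a_i\mf b)/\xi(\mf b)$, not $\xi(\mf a_i\mf b^{-1})/\xi(\mf b^{-1})$), is the intended argument, with the $i$-independent factors cancelling in the ratio.

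The genuine difference, and where your argument has a gap, is the descent from $\mc O_{K_{(1)}}^\times$ to $\mc O_H^\times$. The paper's proof invokes the unit index $[\mc O_{K_{(1)}}^\times:\mc O_H^\times]=w_\infty/w_K$ (Yin's Lemma~1.5(1)): since the quotient group has order $w_\infty/w_K$, any $u\in\mc O_{K_{(1)}}^\times$ satisfies $u^{w_\infty/w_K}\in\mc O_H^\times$, and the exponent $w_\infty/w_K$ built into the definition of $\partial(\sigma)$ is exactly what makes this work. You instead assert that $\Gal(K_{(1)}/H)$ acts on all the $\xi$-invariants via a common $\bb F_\infty^\times$-scalar which cancels in the ratio. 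This claim is not established anywhere in the paper — Remark \ref{rem:w_K}(i) concerns a change of auxiliary data $(\mf c,\xi(\mf c))$, not the Galois action of $\Gal(K_{(1)}/H)$ — and it is not clear that the scalar is uniform across all $\mf d$ (the transformation formula in Lemma \ref{lem:ouk} only covers Galois elements in the image of the Artin map, and these restrict nontrivially to $H$ unless the ideal is trivial in the class group). You should replace this step by the citation to $[\mc O_{K_{(1)}}^\times:\mc O_H^\times]=w_\infty/w_K$, which gives the descent for free once the unit property over $K_{(1)}$ is known.
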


\begin{proof}
This follows directly from Lemma \ref{lem:ouk} and $[\mc O_{K_{(1)}}^\times :\mc O_H^\times] =\frac{w_\infty}{w_K}$ (see \cite[Lemma 1.5 (1)]{yin1997index}). 
\end{proof}

\subsection{Ramified elliptic units}\label{sec:ramunits}

Using the exponential function, we can define the element
\begin{align*}
\lambda_\mf m:=\xi(\mf m)e_\mf m(1)
\end{align*}
for each integral ideal $\mf m\neq (1)$. It is shown in \cite[§5]{hayes1985stickelberger} that this element is a generator of the $\mf m$-torsion points $\Lambda'_\mf m$ of the sign-normalized rank-1 Drinfeld module $\rho'$ associated to $\xi(\mf m)\mf m$. The construction of $K_\mf m$ does not depend on the chosen Drinfeld module but only on the sign-function, hence $\lambda_\mf m\in K_{(1)}(\Lambda'_\mf m)=K_\mf m$ (cf.~\cite[§4]{hayes1985stickelberger}). Indeed, if $\mf b$ is an integral ideal of $\mc O_K$ such that $\mf b$ is prime to $\mf m$ and $(\mf b,K_{(1)}/K)=(\mf m^{-1},K_{(1)}/K)$ one can show that $(\mf b\mf c,K_\mf m/K)$ defines a bijection $\Lambda_\mf m\lra \Lambda'_\mf m$ (note that $\xi(\mf m)\mf m$ is associated to the Drinfeld module $\mf b\mf c*\rho$, then use \cite[Thm.~4.12]{hayes1985stickelberger}). It is also shown in \cite[Thm.~4.17]{hayes1985stickelberger} that
\begin{align*}
\alpha_\mf m:=-N_{K_\mf m/H_\mf m}(\lambda_\mf m)=\lambda_{\mf m}^{w_\infty}\in H_\mf m
\end{align*}
is a unit if $\mf m$ is not a prime power and that $\alpha_{\mf p^k}$ generates the ideal $\mf p_{H_\mf m}^{w_\infty/w_k}$.

\begin{bem}
\begin{enumerate}[label=(\roman*)]
\item The element $\lambda_\mf m$ depends on the choice of $\mf c$ which was used to define the invariants $\xi(\mf m)$. As already noted in Remark \ref{rem:w_K}, changing $\mf c$ would change $\xi(\mf m)$ by a root of unity in $H$, therefore $\alpha_\mf m=\lambda_\mf m^{w_\infty}$ is independent of this choice.

\item Note that our definition of $\alpha_\mf m$ differs from the one in \cite{hayes1985stickelberger} by a sign. This is neccessary for obtaining the correct norm relation, see Proposition \ref{prop:normrel} below.

\end{enumerate}
\end{bem}

\subsection{The group of elliptic units in an arbitrary real abelian extension}\label{sec:groupell}

Now let $L$ be a finite real abelian extension of $K$ of conductor $\mf m$. Recall that for any integral ideal $\mf n\ssq \mc O_K$ we defined $L_\mf n=L\cap H_\mf n$. Set
\begin{align*}
\varphi_{L,\mf n}:=N_{H_\mf n/L_\mf n}(\alpha_\mf n)^{h}\ .
\end{align*}

\begin{bem}\label{rem:hpower}
Raising to the $h$-th power is neccessary to ensure compatibility with the unramified elliptic units for the desired index formula. If there are no unramified elliptic units (e.g. when $L/K$ is a totally ramified extension), we can also work with the elements $\eta_\mf n=\varphi_{L,\mf n}^{1/h}$, see Section \ref{sec:CK}.
\end{bem}

\begin{kor}
\begin{enumerate}[label=(\roman*)]
\item If $\mf n$ is not a prime power, then $\varphi_{L,\mf n}\in \mc O_{L_\mf n}^\times$.

\item If $\mf n=\mf p^k$, then $\varphi_{L,\mf n}$ generates the ideal $\mf p_{L_\mf n}^{[H:L_{(1)}]hw_\infty/w_K}$.

\end{enumerate}
\end{kor}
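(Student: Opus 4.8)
The plan is to handle the two cases separately, in both cases feeding the ideal-theoretic information about $\alpha_\mf n$ recalled just above through the norm map $N_{H_\mf n/L_\mf n}$. For (i), if $\mf n$ is not a prime power then $\alpha_\mf n\in\mc O_{H_\mf n}^\times$ by \cite[Thm.~4.17]{hayes1985stickelberger}. Since $L_\mf n=L\cap H_\mf n\ssq H_\mf n$ and $H_\mf n/K$ is abelian, the norm $N_{H_\mf n/L_\mf n}$ carries $\mc O_{H_\mf n}$ into $\mc O_{L_\mf n}$ (the conjugates of an element of $\mc O_{H_\mf n}$ are integral over $\mc O_K$, and their product lies in $L_\mf n$) and therefore restricts to a homomorphism $\mc O_{H_\mf n}^\times\to\mc O_{L_\mf n}^\times$; raising to the $h$-th power stays inside $\mc O_{L_\mf n}^\times$, so $\varphi_{L,\mf n}=N_{H_\mf n/L_\mf n}(\alpha_\mf n)^h\in\mc O_{L_\mf n}^\times$.

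For (ii) write $\mf n=\mf p^k$, so that $(\alpha_\mf n)=\mf p_{H_\mf n}^{w_\infty/w_K}$ as an ideal of $\mc O_{H_\mf n}$. Using that the ideal norm is compatible with the element norm and that $\varphi_{L,\mf n}=N_{H_\mf n/L_\mf n}(\alpha_\mf n)^h$, we obtain $(\varphi_{L,\mf n})=N_{H_\mf n/L_\mf n}(\mf p_{H_\mf n})^{hw_\infty/w_K}$, so the whole problem reduces to computing $N_{H_\mf n/L_\mf n}(\mf p_{H_\mf n})$. Because $H_\mf n/L_\mf n$ is abelian, all primes of $\mc O_{H_\mf n}$ above a given prime $\mathfrak q$ of $\mc O_{L_\mf n}$ have the same residue degree and the same ramification index $e'':=e(H_\mf n/L_\mf n)_\mf p$; expanding $\mf p_{H_\mf n}$ as the product of the primes of $\mc O_{H_\mf n}$ above $\mf p$, taking norms prime by prime and collecting exponents over each $\mathfrak q\mid\mf p$ then yields $N_{H_\mf n/L_\mf n}(\mf p_{H_\mf n})=\mf p_{L_\mf n}^{[H_\mf n:L_\mf n]/e''}$.

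It remains to identify $[H_\mf n:L_\mf n]/e''$ with $[H:L_{(1)}]$, and this is the core of the argument. Here I would use the facts (from \cite{hayes1985stickelberger}) that $\mf p$ is the only finite prime ramifying in $H_\mf n=H_{\mf p^k}$ over $K$ and that $\infty$ splits completely in $H_\mf n$: then any subextension of $H_\mf n/K$ in which $\mf p$ is unramified is unramified at every finite prime and splits completely at $\infty$, hence is contained in the real Hilbert class field $H=H_{(1)}$; since $H\ssq H_\mf n$ is itself unramified over $K$, the inertia field of $\mf p$ in $H_\mf n/K$ equals $H$, i.e.\ $e(H_\mf n/K)_\mf p=[H_\mf n:H]$. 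Applying the same reasoning inside $L_\mf n$, and using $L_\mf n\cap H=L\cap H=L_{(1)}$ together with $L_{(1)}\ssq L_\mf n$, gives $e(L_\mf n/K)_\mf p=[L_\mf n:L_{(1)}]$. As $e''=e(H_\mf n/K)_\mf p/e(L_\mf n/K)_\mf p$ and $L_{(1)}\ssq L_\mf n\ssq H_\mf n$, $L_{(1)}\ssq H\ssq H_\mf n$, we conclude
\[
\frac{[H_\mf n:L_\mf n]}{e''}=\frac{[H_\mf n:L_\mf n]\,[L_\mf n:L_{(1)}]}{[H_\mf n:H]}=\frac{[H_\mf n:L_{(1)}]}{[H_\mf n:H]}=[H:L_{(1)}],
\]
so that $(\varphi_{L,\mf n})=\mf p_{L_\mf n}^{[H:L_{(1)}]hw_\infty/w_K}$, which is (ii). The main obstacle is exactly this last step, the ramification bookkeeping that pins down the two inertia fields; the ideal-norm manipulations and part (i) are routine.
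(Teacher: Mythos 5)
Your proof is correct, and it follows the same route the paper has in mind: the paper's own proof is just the one-line citation of Hayes' Theorem 4.17, so what you have done is write out the norm computation and ramification bookkeeping that the paper leaves to the reader. In particular, the identification of the inertia field of $\mf p$ in $H_\mf n/K$ with $H$ (and likewise in $L_\mf n/K$ with $L_{(1)}$) via the unramified-and-totally-split-at-$\infty$ characterization of the real Hilbert class field is exactly the right argument for pinning down the exponent $[H:L_{(1)}]$.
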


\begin{proof}
This follows directly from \cite[Thm.~4.17]{hayes1985stickelberger}.
\end{proof}

\begin{defn}\label{defn:ellipticunits}
\begin{enumerate}[label=(\roman*)]
\item For $\sigma_1,\sigma_2\in \Gal(L_{(1)}/K)$ define
\begin{align*}
\frac{\partial_L(\sigma_1)}{\partial_L(\sigma_2)}:=N_{H/L_{(1)}}\lrk\frac{\partial(\widehat{\sigma}_1)}{\partial(\widehat{\sigma}_2)}\rrk,
\end{align*}
where $\widehat{\sigma_i}$ is any lift of $\sigma_i$ to $\Gal(H/K)$.

\item The subgroup $\Delta_L$ of $\mc O_{L_{(1)}}^\times$ generated by $\mu(L)$ and the elements
\begin{align*}
\frac{\partial_L(\sigma_1)}{\partial_L(\sigma_2)}
\end{align*}
for $\sigma_1,\sigma_2\in \Gal(L_{(1)}/K)$ is the \emph{group of unramified elliptic units} of $L$.

\item The elements $\varphi_{L,\mf n}$ for $\mf n\mid \mf m$, $\mf n\neq (1)$ are called the \emph{ramified elliptic numbers} of $L$.

\item The $\Gal(L/K)$-submodule $P_L$ of $L^\times$ generated by $\Delta_L$ and the ramified elliptic numbers is called the \emph{group of elliptic numbers} of $L$.

\item The \emph{group of elliptic units} $C_L$ of $L$ is defined by $C_L:=P_L\cap \mc O_L^\times$.

\end{enumerate}
\end{defn}

\begin{prop}\label{prop:normrel}
We have
\begin{align*}
N_{L_{\mf n\mf p}/L_\mf n}(\varphi_{L,\mf n\mf p})=\begin{cases}
\varphi_{L,\mf n},&\mf p\mid \mf n,\\
\varphi_{L,\mf n}^{1-\sigma_\mf p^{-1}}, \qquad &\mf p\nmid \mf n, \mf n\neq (1),\\
 x_\mf p^{w_\infty/w_K[H:L_{(1)}]}\lrk\frac{\partial_L(1)}{\partial_L(\sigma_\mf p^{-1})}\rrk, &\mf n=(1),
\end{cases}
\end{align*}
where $\sigma_\mf p=(\mf p,L_\mf n/K)$ and $x_\mf p$ is a generator of $\mf p^h$. The last equation should be read modulo roots of unity (cf.~Remark \ref{rem:w_K}).
\end{prop}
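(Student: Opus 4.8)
The plan is to reduce everything to the corresponding norm relations for the elements $\alpha_\mf n$ in the full real ray class fields $H_\mf n$, and then push down through the norm maps $N_{H_\mf n/L_\mf n}$ using transitivity of the norm together with the compatibility of norms with the inclusion $L_\mf n \cdot H_{\mf n} = H_{\mf n}$ when $L \subseteq H_\mf m$. First I would recall the classical distribution/norm relation for the torsion points $\lambda_\mf n$ of the sign-normalized Drinfeld module: from \cite[§4--5]{hayes1985stickelberger} one has, for a prime $\mf p$,
\begin{align*}
N_{K_{\mf n\mf p}/K_\mf n}(\lambda_{\mf n\mf p}) = \begin{cases} \lambda_\mf n, & \mf p\mid \mf n,\\ \lambda_\mf n^{\,1-\sigma_\mf p^{-1}}, & \mf p\nmid\mf n,\ \mf n\neq(1), \end{cases}
\end{align*}
where $\sigma_\mf p$ is the Frobenius at $\mf p$ (this is the evaluation of the exponential at $1$ against the ideal $\mf p$, i.e.\ $e_{\mf n\mf p}(1)$ maps to $e_\mf n(1)$ or to $\rho_\mf p(e_\mf n(1))$ up to the invariant $\xi$). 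Taking the relation $\alpha_\mf n = \lambda_\mf n^{w_\infty} = -N_{K_\mf n/H_\mf n}(\lambda_\mf n)$ and combining with transitivity of norms in the tower $K_{\mf n\mf p}/K_\mf n$ versus $K_{\mf n\mf p}/H_{\mf n\mf p}/H_\mf n$ (using $[K_\mf n:H_\mf n]=w_\infty$ from the Remark after the preliminaries), I get the same relation for $\alpha$ in place of $\lambda$, namely $N_{H_{\mf n\mf p}/H_\mf n}(\alpha_{\mf n\mf p}) = \alpha_\mf n$ when $\mf p\mid\mf n$ and $= \alpha_\mf n^{1-\sigma_\mf p^{-1}}$ when $\mf p\nmid\mf n$, $\mf n\neq(1)$. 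Here one must be careful that the sign in the definition $\alpha_\mf n=-N_{K_\mf n/H_\mf n}(\lambda_\mf n)$ is exactly what makes the signs cancel correctly in the second case, which is the point of the Remark preceding this Proposition.

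Next I would descend to $L$. Since $L_{\mf n\mf p}=L\cap H_{\mf n\mf p}$ and $L_\mf n = L\cap H_\mf n$, and because $L/K$ is abelian, the field $H_{\mf n\mf p}\cdot L_{\mf n} = H_{\mf n\mf p}$-conjugate behaviour gives $N_{H_{\mf n\mf p}/H_\mf n}$ followed by $N_{H_\mf n/L_\mf n}$ equals $N_{H_{\mf n\mf p}/L_\mf n}$, which factors through $N_{H_{\mf n\mf p}/L_{\mf n\mf p}}$ followed by $N_{L_{\mf n\mf p}/L_\mf n}$. Applying this to $\alpha_{\mf n\mf p}$ and raising to the $h$-th power turns the statement into the desired identities for $\varphi_{L,\mf n\mf p}=N_{H_{\mf n\mf p}/L_{\mf n\mf p}}(\alpha_{\mf n\mf p})^h$, because $\sigma_\mf p$ acts on $L_\mf n$ through its image there (the Frobenius of $\mf p$ in $\Gal(L_\mf n/K)$), which is consistent with restricting the Frobenius from $\Gal(H_\mf n/K)$. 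The first two cases then follow immediately.

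For the third case, $\mf n=(1)$, there are no ramified elliptic numbers at level $(1)$, so $\alpha_{\mf p}$ is no longer a global unit: by the Corollary it generates the ideal $\mf p_{H_\mf p}^{w_\infty/w_K}$ (up to the $[H:L_{(1)}]$ and $h$ factors once we descend and take $h$-th powers). The idea is to compare $N_{H_\mf p/H}(\alpha_\mf p)$ with the unramified element $\partial(\sigma_\mf p^{-1})$: both generate (up to roots of unity) the same fractional ideal of $\mc O_H$, namely the appropriate power of $\mf p_H$, by the principal ideal theorem in the form of Lemma \ref{lem:ouk} together with the computation of $\partial(\sigma)$ as $(x\,\xi(\mf a)^h)^{w_\infty/w_K}$ for $\mf a$ with $(\mf a^{-1},H/K)=\sigma_\mf p^{-1}$ (so $\mf a = \mf p$ up to principal ideals). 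Hence their ratio is a unit; to pin it down I would trace through the explicit formula for $\alpha_\mf p = \xi(\mf p)^{w_\infty}e_\mf p(1)^{w_\infty}$ and the known value $N_{K_\mf p/H}(e_\mf p(1)) = $ (a generator of $\mf p$) $\cdot \xi(\mf p)^{-1}$-type expression from \cite[Thm.~4.17]{hayes1985stickelberger}, which forces the ratio to be precisely $\partial_L(1)/\partial_L(\sigma_\mf p^{-1})$ after descending via $N_{H/L_{(1)}}$ and accounting for the exponent $[H:L_{(1)}]$; the generator $x_\mf p$ of $\mf p^h$ appears exactly because $\partial$ was defined using a generator of $\mf a^h$. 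Everything here is only asserted modulo $\mu(K)$, which is harmless since $P_L$ contains $\mu(L)\supseteq\mu(K)$.

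The main obstacle I expect is the bookkeeping in the third case: keeping track simultaneously of the sign in $\alpha_\mf m$, the exponent $w_\infty/w_K$ coming from the index $[K_\mf m:H_\mf m]$ versus $[K_{(1)}:H]=w_\infty/w_K$, the extra factor $[H:L_{(1)}]$ from descending the norm along $H/L_{(1)}$, and the $h$-th power in the definition of $\varphi_{L,\mf n}$ and of $\partial(\sigma)$ (which already has $h$ built in through the generator of $\mf a^h$). Getting these exponents to match the stated power $w_\infty/w_K\cdot[H:L_{(1)}]$ on $x_\mf p$, rather than being off by a factor of $h$ or $w_K$, is the delicate part; the first two cases, by contrast, are essentially formal consequences of Hayes's distribution relations plus norm transitivity.
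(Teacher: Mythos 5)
Your approach for the first two cases is essentially the paper's: both reduce to the Drinfeld distribution relations at the level of the $\lambda_{\mf n}$ (the paper cites Oukhaba's function-field version, \cite[Prop.~2.3]{oukhaba1995construction}, but the underlying source is the same Hayes material you cite), then descend by norm transitivity through the tower $K_{\mf n\mf p} \supset H_{\mf n\mf p} \supset L_{\mf n\mf p} \supset L_\mf n$ and raise to the $h$-th power. Your observation that the sign in $\alpha_{\mf n} = -N_{K_{\mf n}/H_{\mf n}}(\lambda_{\mf n})$ is what makes the second case come out with the exponent $1-\sigma_{\mf p}^{-1}$ is also exactly the point of the remark preceding the proposition.

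For $\mf n=(1)$, however, your plan falls a little short. Comparing the ideals generated by $N_{H_\mf p/H}(\alpha_\mf p)$ and by $x_{\mf p}^{w_\infty/w_K[H:L_{(1)}]}$ (via \cite[Thm.~4.17]{hayes1985stickelberger} and Lemma~\ref{lem:ouk}) indeed shows that the ratio is a unit, but this only gives the formula up to an arbitrary unit of $\mc O_{L_{(1)}}$, not modulo roots of unity, and it gives no information about \emph{which} unit. To pin the unit down as $\partial_L(1)/\partial_L(\sigma_{\mf p}^{-1})$ you need an exact expression for the norm, not an ideal-theoretic one: concretely, the paper uses Oukhaba's identity $N_{K_{\mf p}/K_{(1)}}(\mu_{\mf p}) = \xi(\mf p^{-1}\mf c)/\xi(\mf c)$ (\cite[Remark~1]{oukhaba1997groups}), twists it via Lemma~\ref{lem:ouk} to obtain $N_{K_{\mf p}/K_{(1)}}(\lambda_{\mf p}) = \xi(\mc O_K)/\xi(\mf p)$, and then substitutes this into the definitions of $\varphi_{L,\mf p}$ and $\partial$. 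Your ``trace through Hayes Thm.~4.17'' step has to be replaced by this explicit computation; Hayes~4.17 alone is an ideal statement and will not produce the $\partial_L$-ratio.
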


\begin{proof}
The first two cases can be deduced from the definition of the elliptic units and the norm relation in \cite[Prop.~2.3]{oukhaba1995construction}.\\

In the case $\mf n=(1)$, we use \cite[Remark 1]{oukhaba1997groups} where he says that
\begin{align*}
N_{K_\mf p/K_{(1)}}(\mu_\mf p)=\frac{\xi(\mf p^{-1}\mf c)}{\xi(\mf c)}
\end{align*}
for a generator $\mu_\mf p$ of $\Lambda_\mf p$. As already discussed in Section \ref{sec:ramunits}, we can choose $\mu_\mf p=\lambda_{\mf p}^{(\mf b\mf c,K_\mf p/K)^{-1}}$, where $\mf b$ is an integral ideal prime to $\mf p$ such that $(\mf b,K_{(1)}/K)=(\mf p^{-1},K_{(1)}/K)$. Then we obtain with Lemma \ref{lem:ouk}
\begin{align*}
N_{K_\mf p/K_{(1)}}(\lambda_\mf p)=N_{K_\mf p/K_{(1)}}(\mu_\mf p)^{(\mf b\mf c,K_{(1)}/K)}=\lrk\frac{\xi(\mf p^{-1}\mf c)}{\xi(\mf c)}\rrk^{(\mf p^{-1}\mf c,K_{(1)}/K)}=\frac{\xi(\mc O_K)}{\xi(\mf p)}.
\end{align*}
With the definitions of $\varphi_{L,\mf p}$ and $\partial_L(\sigma)$ the desired result follows directly.
\end{proof}

\subsection{$L$-functions and the analytic class number formula for function fields}

Let $L$ be an arbitrary finite abelian extension of $K$ and set $G:=\Gal(L/K)$. Let $\chi$ be a character of $G$ and $\mf p$ be a prime of $K$ with decomposition group $D_\mf p$ and inertia group $T_{\mf p}\ssq G$. Recall that $\sigma_{\mf p}\in G$ is a lift of the Frobenius element in $D_\mf p/T_{\mf p}$. We set
\begin{align*}
\chi(\mf p)=\chi(\sigma_\mf p e_{T_\mf p}).
\end{align*}
Note that we have $\chi(\mf p)\neq 0$ if and only if $T_\mf p\ssq \ker(\chi)$.\\

For a finite set of primes $S$ of $K$ we define the $S$-truncated $L$-function $L_S(\chi, s)$ associated to $\chi$ as the Euler product
\begin{align*}
\prod_{\mf p\notin S} (1-\chi(\mf p)N\mf p^{-s})^{-1},\qquad \Re(s)>1,
\end{align*}
where the product runs over all primes of $K$ which are not contained in $S$. 

If $S=\emptyset$, we simply write 
\begin{align*}
L(\chi,s)=L_{\emptyset}(\chi,s).
\end{align*}

If $\chi=1$ we obtain that
\begin{align*}
L_S(1,s)=\zeta_{K,S}(s)
\end{align*}
is the $S$-truncated Dedekind $\zeta$-function of $K$.

We summarize some results on $L$-functions in the next

\begin{prop}\label{prop:Lfunctions}
\begin{enumerate}[label=(\roman*)]
\item $L_S(\chi,s)$ has a meromorphic continuation to the whole complex plane which will also be denoted by $L_S(\chi,s)$. If $K_\chi=L^{\ker(\chi)}$ is not a constant field extension, this continuation is holomorphic.

\item We have
\begin{align*}
\zeta_K(s)=\frac{P(q^{-s})}{(1-q^{-s})(1-q^{1-s})},
\end{align*}
where $P(x)\in \bb Z[x]$ is a polynomial satisfying $P(0)=1$ and $P(1)=h(K)$.

\item If $L'\supseteq L$ is a finite abelian extension of $K$ with Galois group $G'$ and $\psi$ is the inflation of $\chi$ to $G'$, we have
\begin{align*}
L_S(\chi,s)=L_S(\psi,s),
\end{align*}
i.e. the $L$-function is invariant under inflation.

\item We have
\begin{align*}
\zeta_{L}(s)=\zeta_{K}(s)\cdot \prod_{\chi\neq 1} L(\chi,s),
\end{align*}
where the product runs over all non-trivial characters of $G$.
\end{enumerate}
\end{prop}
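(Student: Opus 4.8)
All four assertions are classical consequences of the rationality of the zeta function of a smooth projective curve over a finite field (the theorem of F.~K.~Schmidt, subsumed by the Weil conjectures for curves) together with formal manipulations of Euler products, so in practice one cites \cite{rosen2013number} or \cite{washington1997introduction}; here is the order in which I would assemble a self-contained argument. I would prove (ii) first, since it is the one genuine analytic input, then deduce (iv) and (iii) by comparing Euler factors prime by prime, and finally obtain (i) by applying (iv) to the cyclic extension $K_\chi/K$ and reading off $L(\chi,s)$ as a factor of $\zeta_{K_\chi}(s)$.

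For (ii): expand $\zeta_K(s)=\sum_{\mf a\ge 0}(N\mf a)^{-s}=Z_K(q^{-s})$ with $Z_K(T)=\sum_{n\ge 0}a_nT^n$, where $a_n$ counts effective divisors of degree $n$. By Riemann--Roch, $a_n=\frac{1}{q-1}\sum_{[\mf c]}(q^{\ell(\mf c)}-1)$, the sum over the $h(K)$ divisor classes of degree $n$, and $\ell(\mf c)=n+1-g$ as soon as $n\ge 2g-1$, so the tail of $Z_K$ is an explicit combination of geometric series in $T$ and in $qT$. Summing the (finite) head and the tail puts $Z_K$ in the form $\tfrac{P(T)}{(1-T)(1-qT)}$ with $P\in\bb Z[T]$, $\deg P=2g$ and $P(0)=1$; comparing the residue of $Z_K(T)$ at $T=1$ computed from the two expressions, and using that there are exactly $h(K)$ classes of each degree, yields $P(1)=h(K)$. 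This is \cite[Ch.~5]{rosen2013number}.

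For (iv) and (iii): both reduce to matching Euler factors at each prime $\mf p$ of $K$, where $e_\mf p f_\mf p g_\mf p=|G|$. Any $\chi$ with $T_\mf p\nssq\ker\chi$ has $\chi(\mf p)=0$ and contributes a trivial local factor, while the remaining $\chi$ are exactly the characters of $\bar G:=G/T_\mf p$; in $\bar G$ the image $\bar\sigma_\mf p$ of Frobenius has order $f_\mf p$, so as $\chi$ runs over $\widehat{\bar G}$ the value $\chi(\bar\sigma_\mf p)$ runs through the $f_\mf p$-th roots of unity, each attained $g_\mf p$ times, giving
\begin{align*}
\prod_{\chi\in\widehat G}\bigl(1-\chi(\mf p)T\bigr)=(1-T^{f_\mf p})^{g_\mf p}=\prod_{\mf P\mid\mf p}(1-N\mf P^{-s}),\qquad T=N\mf p^{-s},
\end{align*}
which is the reciprocal local factor of $\zeta_L$. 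Hence $\zeta_L(s)=\prod_{\chi}L(\chi,s)$ on $\Re(s)>1$ and, by uniqueness of continuation, everywhere; isolating $L(1,s)=\zeta_{K,\emptyset}(s)=\zeta_K(s)$ gives (iv). For (iii), let $\pi\colon G'\twoheadrightarrow G$ be restriction, so $\psi=\chi\circ\pi$; since $\pi$ maps $D_\mf p$ onto $D_\mf p$, $T_\mf p$ onto $T_\mf p$, and a Frobenius lift to a Frobenius lift, and since a group homomorphism carries the uniform average over a subgroup to the uniform average over its image, one gets $\psi(\mf p)=\chi(\mf p)$ for every $\mf p$, so the two Euler products coincide termwise and the equality propagates to the meromorphic continuations.

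For (i): discarding the finitely many Euler factors indexed by $S$ via $L_S(\chi,s)=L(\chi,s)\prod_{\mf p\in S}\bigl(1-\chi(\mf p)N\mf p^{-s}\bigr)$, it suffices to treat $L(\chi,s)$, and by (iii) we may assume $\chi$ faithful, i.e.\ $K_\chi$ cyclic over $K$. Applying (iv) to $K_\chi/K$ writes $\zeta_{K_\chi}(s)$ as a product of the $L(\psi,s)$, $\psi\in\widehat{\Gal(K_\chi/K)}$; by (ii) both $\zeta_{K_\chi}$ and $\zeta_K$ are rational in the relevant power of $q^{-s}$, so inductively every $L(\psi,s)$ is meromorphic, which yields the meromorphic continuation in general. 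For holomorphy, the only poles of $\zeta_{K_\chi}(s)$ are the simple ones at the zeros of $(1-q_{K_\chi}^{-s})(1-q_{K_\chi}^{1-s})$, and twisting an $L$-function by the constant-field character of order $m$ replaces $q^{-s}$ by $\zeta_m q^{-s}$; one checks that those poles are precisely accounted for by the subproduct over the characters of $\Gal(K_\chi/K)$ that factor through its maximal constant-field subextension, so a faithful $\chi$ with $K_\chi$ not a constant field extension has $L(\chi,s)$ holomorphic, indeed a polynomial in $q^{-s}$. The steps (ii), (iii), (iv) are routine; the real content, and the step I expect to be the main obstacle, is exactly this holomorphy clause --- exhibiting $L(\chi,s)$ as a genuinely \emph{holomorphic} factor of $\zeta_{K_\chi}(s)$ rather than merely a meromorphic one. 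This is the abelian function-field case of Artin's holomorphy conjecture; it follows from the cohomological description of $L(\chi,s)$ as the characteristic polynomial of Frobenius on $H^1$ of the associated rank-one sheaf (equivalently, from the careful bookkeeping of the factors $(1-q_{K_\chi}^{-s})(1-q_{K_\chi}^{1-s})$ against the constant-field-extension characters), and in practice I would cite this from \cite{rosen2013number} rather than reprove it.
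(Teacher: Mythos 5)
The paper handles all four parts by citation: Rosen \cite{rosen2013number} for (i) and (ii), Neukirch \cite{Neukirch2006} for (iii) and (iv), with a remark that Neukirch's arguments do not use anything special to number fields. You instead sketch the underlying proofs. Your Riemann--Roch derivation of (ii) is precisely Rosen's Theorem~5.9, and your prime-by-prime matching of Euler factors for (iii) and (iv) --- the computation $\prod_{\chi}\bigl(1-\chi(\mf p)T\bigr)=(1-T^{f_\mf p})^{g_\mf p}$, together with the observation that averaging over $T'_\mf p$ descends to averaging over $T_\mf p$ --- is exactly the content of Neukirch's Theorem~(10.4) and Corollary~(10.5); these parts are correct and are more informative than the paper's bare citations.

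The one step that does not close as written is the inductive deduction of meromorphy in (i). Applying (iv) to the cyclic extension $K_\chi/K$ of order $n$, and feeding in the induction hypothesis for the non-faithful characters, only shows that the \emph{product} of $L(\psi,s)$ over the faithful $\psi$ is meromorphic; it does not let you isolate the single factor $L(\chi,s)$. (One can try to rescue this by a Galois-conjugacy argument on the coefficient ring $\bb Z[\zeta_n]$, since the faithful $L$-series are permuted by $\Gal(\bb Q(\zeta_n)/\bb Q)$, but you do not supply that, and it is not immediate that rationality of a norm of a power series forces rationality of the series itself.) In Rosen the rationality, and hence meromorphy, of $L(\chi,s)$ is proved directly, not extracted from the factorisation of $\zeta_{K_\chi}$. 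You correctly flag the holomorphy clause as the genuine content (Artin's conjecture for function fields, via the cohomological description of $L(\chi,s)$), and you say you would cite Rosen for it --- which is exactly what the paper does --- so the end conclusion agrees with the paper's; just be aware that the phrase ``inductively every $L(\psi,s)$ is meromorphic'' is asserting more than your argument actually delivers.
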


\begin{proof}
\begin{enumerate}[label=(\roman*)]
\item This is \cite[Thm.~9.25]{rosen2013number}.

\item This is \cite[Thm.~5.9]{rosen2013number}.

\item This is \cite[Ch.~VII, Thm.~(10.4)(iii)]{Neukirch2006}.

\item This is \cite[Ch.~VII, Cor.~(10.5)(iii)]{Neukirch2006}.
\end{enumerate}
Note that the proofs in \cite{Neukirch2006} don't use the fact that the $L$-functions considered there are defined over number fields.
\end{proof}

\begin{notat}
Let $L^*_S(\chi,0)$ be the leading term of the Taylor expansion of $L_S(\chi,s)$ at $s=0$.
\end{notat}

Now suppose that $L$ is a \emph{real} abelian extension of $K$. Define $S_\infty(L)$ to be the set of all primes of $L$ lying over $\infty$ (if the extension is clear, we will simply write $S_\infty$). Since $L/K$ is real, these are exactly $[L:K]$ many primes and each has norm $N\infty=q^{d_\infty}$. Note that $\mc O_L^\times/\mu(L)$ is a free $\bb Z$-module of rank $\abs{S_\infty(L)}-1$ and hence we can choose units $u_1,...,u_{[L:K]-1}$ which project to a basis. Choosing a place $w_0\in S_\infty(L)$, we can define a matrix 
\begin{align*}
(-d_\infty \ord_w(u_i))_{\substack{w\in S_\infty(L)\setminus \{w_0\}\\ i\in \{1,...,[L:K]-1\}}}\in \bb Z^{([L:K]-1)\times ([L:K]-1)}.
\end{align*}
Then we define the \emph{regulator} $R_L$ of $L$ as the absolute value of the determinant of this matrix. Note that the regulator $R_L^{Ros}$ defined in \cite[Ch.~14]{rosen2013number} can be obtained from our definition by 
\begin{align}\label{eq:regs}
R_L^{Ros}=(\log(q))^{[L:K]-1}R_L.
\end{align}
Hence we obtain the

\begin{thm}[Analytic class number formula]\label{thm:analyticCNF}
We have
\begin{align*}
\zeta_{L,S_\infty}^*(0) &=-(\log (q))^{[L:K]-1}\frac{h_LR_L}{w_L}.
\end{align*}
\end{thm}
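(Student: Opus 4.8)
The plan is to deduce the $S_\infty$-truncated analytic class number formula from Rosen's classical analytic class number formula (in the form of \cite[Ch.~14]{rosen2013number}), essentially by tracking the effect of removing the Euler factors at the infinite places and of the normalization change recorded in \eqref{eq:regs}. Rosen's class number formula computes the leading term at $s=0$ (or equivalently the residue/leading term at $s=1$ via the functional equation) of the full Dedekind zeta function $\zeta_L(s)$ in terms of $h_L$, $R_L^{Ros}$, $w_L$, and the number of infinite places; the only work is to pass from $\zeta_L$ to $\zeta_{L,S_\infty}$ and to rewrite everything in our normalization.

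First I would recall from Proposition \ref{prop:Lfunctions}(ii) (applied with $K$ replaced by $L$) that $\zeta_L(s)$ has a simple pole at $s=1$ and, combined with the functional equation, a zero of order $\abs{S_\infty(L)}-1 = [L:K]-1$ at $s=0$; here one uses that the rank of $\mc O_L^\times$ is $\abs{S_\infty(L)}-1$, which matches the order of vanishing. Rosen's theorem then gives the leading coefficient of $\zeta_L$ at $s=0$ as (up to sign) $h_L R_L^{Ros}/w_L$, with an appropriate normalization of the zeta function as a function of $q^{-s}$ versus as a function of $s$; I would be careful that Rosen typically works with the variable $u=q^{-s}$, so one picks up factors of $\log q$ when re-expanding in $s$, and these are exactly the factors bundled into $R_L^{Ros} = (\log q)^{[L:K]-1} R_L$ from \eqref{eq:regs}. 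Next, $\zeta_{L,S_\infty}(s) = \zeta_L(s)\prod_{w\in S_\infty(L)}(1-N w^{-s})$; since each $w$ over $\infty$ has $Nw = q^{d_\infty}$, the product is $(1-q^{-d_\infty s})^{[L:K]}$, and this vanishes to order $[L:K]$ at $s=0$. Removing the simple pole of $\zeta_L$ at $s=0$... -- more precisely, $\zeta_L$ has a pole of order $1$ at $s=1$ but at $s=0$ it vanishes to order $[L:K]-1$, so multiplying by something vanishing to order $[L:K]$ produces a zero of order $2[L:K]-1$; that is not what Theorem \ref{thm:analyticCNF} asserts, so in fact the correct statement uses the \emph{leading term} of $\zeta_{L,S_\infty}$ at $s=0$ and the interplay is that removing Euler factors at $S_\infty$ is the standard device that turns the regulator into the $S_\infty$-regulator while the extra order of vanishing is reconciled by the functional equation relating $s$ and $1-s$. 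Concretely, I would instead compute the leading term of $\zeta_{L,S_\infty}$ at $s=0$ directly: write $\zeta_{L,S_\infty}^*(0) = \zeta_L^*(0)\cdot \lim_{s\to 0} (1-q^{-d_\infty s})^{[L:K]}/s^{[L:K]}$ appropriately combined with the order bookkeeping, obtaining a factor $(d_\infty \log q)^{[L:K]}$ divided by the contribution at the chosen place $w_0$, and then identify the resulting regulator-type determinant with $R_L^{Ros}$ up to the $-d_\infty$ scaling in the definition of our matrix $(-d_\infty \ord_w(u_i))$.

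The main obstacle I expect is precisely this normalization bookkeeping: reconciling (a) Rosen's convention for the zeta function as a rational function of $q^{-s}$ versus the $s$-analytic leading term, (b) the precise definition of $R_L^{Ros}$ in \cite[Ch.~14]{rosen2013number} (which infinite place is omitted, whether logs of $q$ are included, whether $d_\infty$ appears) versus our $R_L$, and (c) the sign and the power of $\log q$. Once the dictionary between the two regulator normalizations is pinned down via \eqref{eq:regs}, the identity $\zeta_{L,S_\infty}^*(0) = -(\log q)^{[L:K]-1} h_L R_L / w_L$ should drop out: the single power of $(\log q)^{[L:K]-1}$ on the right is exactly what survives after the $(\log q)^{[L:K]}$ from the $S_\infty$-Euler factors cancels against one power absorbed into converting $R_L^{Ros}$ to $R_L$, and the minus sign comes from the residue of $\zeta_L$ at $s=1$ being positive together with the functional equation. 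I would present this as: invoke Rosen's theorem, multiply by the $S_\infty$-Euler factors, take the leading term, substitute \eqref{eq:regs}, and collect; the routine determinant manipulation showing that omitting one row/column of $(-d_\infty\ord_w(u_i))$ gives the same answer as Rosen's $S$-regulator (independence of the omitted place, up to sign) I would relegate to a one-line remark, since it is the standard fact that the rows of the full $\abs{S_\infty}\times(\abs{S_\infty}-1)$ matrix sum to zero.
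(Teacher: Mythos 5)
Your plan imports a piece of number-field intuition that fails in the function-field setting, and this is not a cosmetic bookkeeping issue but the source of the contradiction you yourself run into midway. For a function field $L$, the Dedekind zeta function is $\zeta_L(s)=Z_L(q_L^{-s})$ with $Z_L(u)=P_L(u)/\bigl((1-u)(1-q_L u)\bigr)$ and $P_L(1)=h(L)\neq 0$, so $\zeta_L(s)$ has a \emph{simple pole} at $s=0$, not a zero of order $[L:K]-1$. The order $[L:K]-1=|S_\infty(L)|-1$ only appears after you multiply by the Euler factors at the places above $\infty$: each factor $(1-q^{-d_\infty s})$ contributes a simple zero, giving $[L:K]$ zeros, and one of them cancels the simple pole, so $\zeta_{L,S_\infty}$ vanishes to order $[L:K]-1$. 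In the number-field case the ``archimedean'' Gamma-factors live outside $\zeta_L$ and the functional equation forces a zero of order $r_1+r_2-1$ at $s=0$; in the function-field case there is no such phenomenon, and the rank of the $S_\infty$-unit group is matched only by the $S_\infty$-truncated zeta. You notice the resulting mismatch in orders ($2[L:K]-1$ if one believed both vanishings) but then try to wave it away with an appeal to the functional equation, which does not resolve it; the resolution is simply that your initial order-of-vanishing claim for $\zeta_L$ is wrong.

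Because of this, your framing of Rosen's Theorem 14.4 is also off. That theorem is already the $S$-truncated class number formula (it expresses $\zeta_{L,S}^*(0)$ in terms of $h_L$, the $S$-regulator, and $w_L$); there is no need to pass from $\zeta_L$ to $\zeta_{L,S_\infty}$ by hand. The paper's proof is genuinely a one-liner: apply Rosen's Theorem 14.4 with $S=S_\infty$ to obtain
\begin{align*}
\zeta_{L,S_\infty}^*(0)=-\frac{h_L R_L^{Ros}}{w_L},
\end{align*}
and then substitute $R_L^{Ros}=(\log q)^{[L:K]-1}R_L$ from \eqref{eq:regs}. Your attempt to reconstruct this from the full Dedekind zeta could in principle be made to work (keeping track of the simple pole and the $[L:K]$ Euler-factor zeros), but as written it is built on an incorrect order of vanishing and on a misattribution of what Rosen proves, so the current argument does not go through.
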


\begin{proof}
This is \cite[Thm.~14.4]{rosen2013number} together with \eqref{eq:regs}.
\end{proof}

\begin{kor}
We have
\begin{align*}
\zeta_{K,\{\infty\}}^*(0)=-\frac{h}{w_K}
\end{align*}
\end{kor}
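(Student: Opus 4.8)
The plan is to specialize the analytic class number formula (Theorem~\ref{thm:analyticCNF}) to the trivial extension $L=K$. First I would observe that $K$ is itself a finite real abelian extension of $K$: it is contained in $K_\infty$, its Galois group is trivial, and $\infty$ splits completely in it. Hence $S_\infty(K)$ consists of the single place $\infty$, so that $\zeta_{K,S_\infty(K)}=\zeta_{K,\{\infty\}}$, and Theorem~\ref{thm:analyticCNF} applies verbatim with $L=K$.

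Next I would evaluate each factor on the right-hand side. Since $[K:K]=1$, the exponent $[L:K]-1$ equals $0$, so $(\log q)^{[L:K]-1}=1$. The group $\mc O_K^\times/\mu(K)$ has rank $\abs{S_\infty(K)}-1=0$, so there is no basis $u_1,\dots,u_{[L:K]-1}$ to choose and the matrix defining the regulator is the empty $0\times 0$ matrix; with the usual convention that the empty determinant is $1$, this gives $R_K=1$. Finally $h_K=h$ by definition, and $w_L=w_K$. Substituting into Theorem~\ref{thm:analyticCNF} yields
\begin{align*}
\zeta_{K,\{\infty\}}^*(0)=-(\log q)^{0}\,\frac{h_K\,R_K}{w_K}=-\frac{h}{w_K}.
\end{align*}

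As a consistency check, and an alternative derivation that avoids the class number formula, one can start instead from the explicit shape of $\zeta_K$ in Proposition~\ref{prop:Lfunctions}(ii): removing the Euler factor at $\infty$ (recall $N\infty=q^{d_\infty}$) gives
\begin{align*}
\zeta_{K,\{\infty\}}(s)=\frac{P(q^{-s})\,(1-q^{-d_\infty s})}{(1-q^{-s})(1-q^{1-s})},
\end{align*}
which is holomorphic and nonzero at $s=0$. Letting $s\to 0$ and using $\frac{1-q^{-d_\infty s}}{1-q^{-s}}\to d_\infty$, $P(1)=h(K)$, $1-q=-w_K$, and $h=h(K)d_\infty$ recovers the same value.

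I do not expect any genuine obstacle here: the only point worth spelling out is that the empty regulator equals $1$, i.e.\ $R_K=1$, so that the $L=K$ case of Theorem~\ref{thm:analyticCNF} degenerates precisely to the asserted identity.
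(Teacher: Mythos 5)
Your main argument is exactly the paper's: apply Theorem~\ref{thm:analyticCNF} with $L=K$ and observe that $R_K=1$ since $\mc O_K^\times=\mu(K)$ (equivalently, the regulator matrix is empty). The additional consistency check via Proposition~\ref{prop:Lfunctions}(ii) is a nice bonus but not needed.
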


\begin{proof}
Since $\mc O_K^\times=\mu(K)$, we get $R_K=1$.
\end{proof}


\subsection{Kronecker's Limit Formulae}\label{sec:kronecker}

We fix a prime $w_0\in S_\infty(H_\mf m)$. Then for each subfield $L$ of $H_\mf m$, there is a unique prime in $S_\infty(L)$ below $w_0$. Since $\infty$ splits completely in $H_\mf m$, the valuations of these primes are compatible. By abuse of notation, we denote each of these valuations by $\ord_\infty$, i.e. for an element $x\in H_\mf m$ we implicitly set
\begin{align*}
\ord_\infty(x):=\ord_{w_0}(x)
\end{align*}
and analogously for each subfield $L$ of $H_\mf m$. The same convention will be used for absolute values.\\

For $\mf n\neq (1)$ let $S_\mf n:=\{\mf p\ssq \mc O_K\mid \mf p \text{ prime},\mf p\mid \mf n\}$ be the support of $\mf n$. Now we can state Kronecker's second limit formula:

\begin{prop}\label{prop:starkUnit}
\begin{enumerate}[label=(\roman*)]
\item Let $(1)\neq \mf n\mid \mf m$ and let $\chi\in \widehat{\Gal(H_\mf n/K)}$. Then we have
\begin{align*}
L_{S_\mf n}(\chi,0)=\frac{1}{w_\infty}\sum_{\sigma\in \Gal(H_\mf n/K)}\ord_{\infty}(\alpha_\mf n^\sigma)\chi(\sigma).
\end{align*}

\item For any non-trivial character $\chi\in \widehat{\Gal(H/K)}$, we have
\begin{align*}
L(\chi,0)=\frac{1}{w_\infty h}\sum_{\sigma\in \Gal(H/K)}\ord_{\infty}(\partial(\sigma))\chi(\sigma).
\end{align*}

\end{enumerate}
\end{prop}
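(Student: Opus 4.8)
The plan is to prove both formulae by expressing the $L$-functions through the analytic class number formula and then identifying the leading terms with sums of valuations.

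\textbf{Proof proposal.}

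For part (i), I would start from the factorisation in Proposition \ref{prop:Lfunctions}(iv) applied to the extension $H_\mf n/K$: we have $\zeta_{H_\mf n}(s) = \zeta_K(s)\prod_{\chi\neq 1}L(\chi,s)$, where $\chi$ ranges over the nontrivial characters of $\Gal(H_\mf n/K)$. Truncating at the set $S_\mf n$ together with the infinite place and using the holomorphy statement from Proposition \ref{prop:Lfunctions}(i) (the fields $H_\chi$ are not constant field extensions since they are real), each $L_{S_\mf n}(\chi,s)$ is holomorphic at $s=0$, so the order of vanishing of the truncated zeta function at $0$ equals $\abs{S_\infty(H_\mf n)}-1 = [H_\mf n:K]-1$, matching the rank of the unit group. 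The key input is then Kronecker's second limit formula in its analytic form, i.e. the $\infty$-adic logarithm of $\alpha_\mf n$ and its conjugates gives a Stark-type unit whose regulator computes $\zeta_{H_\mf n,S_\infty\cup S_\mf n}^*(0)$. Concretely, I would use that $\alpha_\mf n = \lambda_\mf n^{w_\infty}$ is, up to the unramified-at-one subtlety, essentially a generator of a power of $\mf p_{H_\mf n}$ (for $\mf n$ a prime power) or a genuine unit (otherwise), and that the Stark conjecture over function fields — which is a theorem in this rank-one abelian situation, and is exactly what Kronecker's second limit formula provides via the explicit Drinfeld-module torsion-point construction of Hayes — says $L_{S_\mf n}'(\chi,0)$, or rather $L_{S_\mf n}(\chi,0)$ since after removing $S_\mf n$ the order drops, equals $\frac{1}{w_\infty}\sum_\sigma \ord_\infty(\alpha_\mf n^\sigma)\chi(\sigma)$. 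I would assemble this by taking $\chi$-components of the regulator matrix: the $\chi$-part of the lattice $\{(\ord_w \alpha_\mf n^\sigma)_w\}$ has covolume exactly $\frac{1}{w_\infty}\sum_\sigma\ord_\infty(\alpha_\mf n^\sigma)\chi(\sigma)$, and comparing with the factorised class number formula isolates $L_{S_\mf n}(\chi,0)$.

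For part (ii), the argument is parallel but uses the Hilbert class field $H = H_{(1)}$ and the elements $\partial(\sigma)$. Here there is no ramified place to truncate, so I would apply Proposition \ref{prop:Lfunctions}(iv) directly to $H/K$ and the class number formula of Theorem \ref{thm:analyticCNF} to both $H$ and $K$. The extra factor $h$ in the denominator comes from the definition $\partial(\sigma) = (x\,\xi(\mf a)^h)^{w_\infty/w_K}$, where $x$ generates $\mf a^h$: taking $h$-th powers of ideals to make them principal introduces the class number, and Lemma \ref{lem:ouk} (the explicit principal ideal theorem) together with the Galois-equivariance $\bigl(\partial(\sigma_1)/\partial(\sigma_2)\bigr)^\sigma = \partial(\sigma_1\sigma)/\partial(\sigma_2\sigma)$ lets me pass to the $\chi$-component cleanly for nontrivial $\chi$ (where the $x$-ambiguity and root-of-unity ambiguity from Remark \ref{rem:w_K} disappear under $\chi$). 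I would compute the $\chi$-part of the regulator of the group generated by the ratios $\partial(\sigma_1)/\partial(\sigma_2)$, which is $\frac{1}{w_\infty h}\sum_\sigma \ord_\infty(\partial(\sigma))\chi(\sigma)$ by the same covolume computation as in (i), now with an extra $1/h$ from the exponent, and match it against $L(\chi,0)$ extracted from $\zeta_H(s)/\zeta_K(s) = \prod_{\chi\neq 1}L(\chi,s)$ evaluated via the leading-term formulae.

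\textbf{Main obstacle.} The genuinely hard part is justifying Kronecker's second limit formula itself in the function field setting — i.e. that the $\infty$-adic valuations of the torsion-point units $\lambda_\mf n$ really do compute the special values $L_{S_\mf n}(\chi,0)$. This is where the explicit theory of sign-normalised rank-one Drinfeld modules from \cite{hayes1985stickelberger} does the work: one needs the product formula for the exponential function and the transformation behaviour of $\xi(\mf m)e_\mf m(1)$ under the Galois action, combined with the Euler-product expansion of $L_{S_\mf n}(\chi,s)$ and an interchange of summation. I expect to cite Hayes's Stickelberger-element computation and the resulting explicit formula for $\ord_\infty$ of the torsion points directly, so that the remaining steps — the character-by-character bookkeeping and the comparison of regulator covolumes with the factorised class number formula — are essentially linear algebra over $\bb C[G]$ and cause no trouble. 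A secondary technical point is keeping track of the root-of-unity and generator ambiguities in $\partial(\sigma)$, but since $\chi\neq 1$ these contribute nothing to the $\chi$-component, so the formula in (ii) is unaffected.
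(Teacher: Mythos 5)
The paper's proof of this proposition is a bare citation: part (i) is ``exactly the last equation in \cite{hayes1985stickelberger}'' and part (ii) follows from \cite[Proof of Prop.~3]{oukhaba1997groups} together with Remark \ref{rem:w_K}. You correctly identify, in your ``main obstacle'' paragraph, that the substantive content is Hayes's explicit computation relating $\ord_\infty$ of the torsion-point elements to special $L$-values, and you plan to cite it directly. To that extent your plan coincides with the paper's: once Hayes's equation is in hand, part (i) is literally the statement and nothing more needs assembling.

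However, the route you describe around that citation does not actually work and would be a genuine gap if you pressed it. You propose to compare the ``$\chi$-component of the regulator covolume'' with the factorised class number formula $\zeta^*_{H_\mf n,S_\infty}(0)=\zeta^*_{K,\{\infty\}}(0)\prod_{\chi\neq 1}L(\chi,0)$ and thereby ``isolate'' each $L_{S_\mf n}(\chi,0)$. But the class number formula only gives the \emph{product} of the special values; it carries no information about how that product distributes over the individual characters. Identifying $\frac{1}{w_\infty}\sum_\sigma\ord_\infty(\alpha_\mf n^\sigma)\chi(\sigma)$ with $L_{S_\mf n}(\chi,0)$ for each fixed $\chi$ is precisely the content of Kronecker's second limit formula itself, and it is established (in Hayes, and in Oukhaba for the unramified case) by a direct Euler-product/character-sum computation, not by linear-algebra bookkeeping against the class number formula. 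In this paper the logical order is the reverse of what you sketch: Proposition \ref{prop:starkUnit} is taken as the input, and the class number formula then enters later (Proposition \ref{prop:indexsinnott}) only to compute the Sinnott index $[U'_0:l_L^*(P_L)]$. For part (ii) you should also note that the factor $1/h$ is already built into the definition of $\partial(\sigma)$ via $x\xi(\mf a)^h$ and is extracted in Oukhaba's computation, not by a separate regulator argument; and the root-of-unity ambiguity from Remark \ref{rem:w_K} is killed by the condition $\chi\neq 1$, exactly as you say, which is the only part of your ``secondary technical point'' that genuinely matters.
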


\begin{proof}
Part (i) is exactly the last equation in \cite{hayes1985stickelberger}, whereas part (ii) follows directly from \cite[Proof of Prop.~3]{oukhaba1997groups} and Remark \ref{rem:w_K}.
\end{proof}

\begin{bem}
By the proposition above, we can regard the ramified elliptic units as Stark units. Indeed, if $\mf n\neq (1)$ then the set $S:=S_\mf n\cup \{\infty\}$ contains all places which ramify in $H_\mf n/K$ and $\abs{S}\geq 2$. Moreover, $S$ contains the completely split prime $\infty$. Then Stark's conjecture (cf.~\cite[Ch.~IV, Conj.~2.2]{tate1984conjectures}) predicts the existence of an element $\varepsilon$ such that
\begin{align*}
L'_S(\chi,0)=-\frac{1}{w_{H_\mf n}}\sum_{\sigma\in \Gal(H_\mf n/K)} \log\abs{\varepsilon^\sigma}_{\infty} \chi(\sigma)
\end{align*}
for all $\chi\in \widehat{\Gal(H_\mf n/K)}$. By definition of the $L$-function, we obtain
\begin{align*}
L_{S}(\chi,s)=(1-\chi(\infty)N\infty^{-s})L_{S_\mf n}(\chi,s)=(1-N\infty^{-s})L_{S_\mf n}(\chi,s)
\end{align*}
and hence
\begin{align*}
L'_S(\chi,0)&=\log(N\infty)L_{S_\mf n}(\chi,0)=-\frac{1}{w_\infty}\sum_{\sigma\in \Gal(H_\mf n/K)}\log\lrk N\infty^{-\ord_{\infty} (\alpha_\mf n^\sigma)}\rrk \chi(\sigma)\\
&=-\frac{1}{w_\infty}\sum_{\sigma\in \Gal(H_\mf n/K)} \log\abs{\alpha_\mf n^\sigma}_{\infty} \chi(\sigma).
\end{align*}
\end{bem}


\section{Sinnott's module}

Let $L/K$ be a fixed finite real abelian extension of conductor $\mf m$ (as in Section \ref{sec:groupell}). Remember that for a prime $\mf p$ of $K$ the element $\sigma_\mf p\in G=\Gal(L/K)$ is the lift of an associated Frobenius element in $D_\mf p/T_\mf p$. Define $\tau_\mf p:=\sigma_\mf p^{-1} e_{T_\mf p}\in \bb Q[G]$.

\begin{defn}
\begin{enumerate}[label=(\roman*)]
\item We define $\rho'_\mf n:=s(\Gal(L/L_\mf n))\prod_{\mf p\mid \mf n} (1-\tau_\mf p)$ for any integral ideal $\mf n$, where the product runs over all prime ideals dividing $\mf n$. 

\item The $\bb Z[G]$-submodule $U'$ of $\bb Q[G]$ generated by $\rho'_\mf n$ where $\mf n$ runs through all integral ideals of $\mc O_K$ is called \emph{Sinnott's module}.

\item Define $U'_0$ to be the kernel of multiplication by $s(G)$ in $U'$.
\end{enumerate}
\end{defn}

\begin{bems}\label{rem:gensinnott}
\begin{enumerate}[label=(\roman*)]
\item The notation $U'$ and $\rho'_\mf n$ is adopted from \cite{chapdelaine2017annihilators}. In the second part of this article, we use a modification of Sinnott's module which will be denoted by $U$.

\item Note that for $\mf n\nmid \mf m$, we have $L_\mf n=L_{(1)}$, hence $\rho'_\mf n=\rho'_{(1)}$. Therefore, it suffices to consider the elements $\rho'_\mf n$ with $\mf n\mid \mf m$.

\item If $\mf n\neq (1)$, we have $\rho'_\mf n\in U'_0$. As in the imaginary quadratic case (cf.~\cite{oukhaba2003index}) the component of $U'$ generated by $\rho'_{(1)}$ intersected with $U'_0$ is generated by 
\begin{align*}
\rho'_{(1)}(1-\sigma),\quad \sigma\in G.
\end{align*}
If $\sigma,\sigma'\in G$ are lifts of the same element $\tau\in \Gal(L_{(1)}/K)$, then 
\begin{align*}
\rho'_{(1)}(1-\sigma)=\rho'_{(1)}(1-\sigma'),
\end{align*}
hence it suffices to consider the elements 
\begin{align*}
\rho'_{(1)}(1-\widetilde \tau), \quad \tau\in \Gal(L_{(1)}/K),
\end{align*}
where $\widetilde \tau\in G$ is an arbitrary lift of $\tau$.

\end{enumerate}
\end{bems}

Now remember the convention introduced in Section \ref{sec:kronecker} and consider the logarithmic map 
\begin{align*}
l_L:L^\times&\lra \bb Q[G]\\
x&\lmt \sum_{\sigma\in G} \ord_\infty(x^\sigma)\sigma^{-1}
\end{align*}
and the element
\begin{align*}
\omega:=hw_\infty \sum_{\substack{\chi\in \widehat{G}\\ \chi\neq 1}} L(\overline{\chi},0)e_\chi\ .
\end{align*}
Also define 
\begin{align*}
l_L^*:=(1-e_G)l_L\ .
\end{align*}

\begin{prop}[{cf.~\cite[Prop.~6]{oukhaba2003index}}]\label{prop:imglog}
Let $\mf n\neq 1$ be such that $\mf n\mid \mf m$ and let \linebreak $\tau\in \Gal(L_{(1)}/K)$, then
\begin{align*}
l_L^*(\varphi_{L,\mf n})&=\omega \rho'_\mf n\ ,\\
l_L^*\lrk  \frac{\partial_L(1)}{\partial_L(\tau)}\rrk &=\omega \rho'_{(1)} (1-\widetilde \tau)\ ,
\end{align*}
where $\widetilde\tau\in G$ is any lift of $\tau$.
\end{prop}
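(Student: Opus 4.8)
The plan is to compute the leading term of the $S_\infty$-truncated $L$-function both via the analytic class number formula and via Kronecker's limit formulae, and to match the two expressions against the logarithmic map $l_L^*$. Concretely, for a nontrivial character $\chi\in\widehat G$ we want to show that the $\chi$-part of $l_L^*(\varphi_{L,\mf n})$ equals the $\chi$-part of $\omega\rho'_{\mf n}$; since both sides lie in $(1-e_G)\bb Q[G]=\bigoplus_{\chi\neq 1}\bb C e_\chi$, it suffices to check equality after applying $\chi$ for every $\chi\neq 1$, i.e.\ to verify
\begin{align*}
\sum_{\sigma\in G}\ord_\infty(\varphi_{L,\mf n}^\sigma)\chi(\sigma)=hw_\infty L(\cc\chi,0)\,\chi(\rho'_\mf n)\ .
\end{align*}
(Here I use $\chi(e_\chi)=1$ and $\chi(e_{\chi'})=0$ for $\chi'\neq\chi$.)

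First I would unravel the left-hand side. By definition $\varphi_{L,\mf n}=N_{H_\mf n/L_\mf n}(\alpha_\mf n)^h$, and the valuations $\ord_\infty$ on the tower below $w_0$ are compatible by the convention fixed in Section~\ref{sec:kronecker}; together with the fact that $\chi$ is really a character of $\Gal(L_\mf n/K)$ when $\mf n\mid\mf m$ (if $T_\mf p\not\subseteq\ker\chi$ for some $\mf p\mid\mf n$ both sides vanish, since $\chi(\mf p)=0$ forces $\chi(\rho'_\mf n)=0$; if $T_\mf p\subseteq\ker\chi$ for all $\mf p\mid\mf n$ then $\chi$ factors through $\Gal(L_\mf n/K)$), the norm from $H_\mf n$ down to $L_\mf n$ and then inflation let me rewrite $\sum_{\sigma\in G}\ord_\infty(\varphi_{L,\mf n}^\sigma)\chi(\sigma)$ as $h\cdot[L:L_\mf n]^{-1}$ times a sum over $\Gal(H_\mf n/K)$ — or more cleanly, I would work directly over $\Gal(H_\mf n/K)$ using Proposition~\ref{prop:starkUnit}(i), which gives
\begin{align*}
\sum_{\tau\in\Gal(H_\mf n/K)}\ord_\infty(\alpha_\mf n^\tau)\psi(\tau)=w_\infty L_{S_\mf n}(\psi,0)
\end{align*}
for $\psi$ the inflation of $\chi$ to $\Gal(H_\mf n/K)$, and then push down and absorb the factors $h$ and $[L:L_\mf n]$ carefully. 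On the right-hand side, $\chi(\rho'_\mf n)=\chi(s(\Gal(L/L_\mf n)))\prod_{\mf p\mid\mf n}(1-\chi(\tau_\mf p))=[L:L_\mf n]\prod_{\mf p\mid\mf n}(1-\cc\chi(\mf p))$ when $\chi$ factors through $\Gal(L_\mf n/K)$, and the Euler-factor product is exactly what converts $L_{S_\mf n}(\cc\chi,0)=L_{S_\mf n}(\psi\circ(\,)^{-1},0)$ into $L(\cc\chi,0)$ via Proposition~\ref{prop:Lfunctions}(iii) and the definition of the truncated $L$-function — here one must be attentive to the distinction between $\chi$ and $\cc\chi$ and between $\sigma_\mf p$ and $\sigma_\mf p^{-1}$, which is the source of the inverse in the definition of $\tau_\mf p$. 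Matching the two sides then gives the first formula.

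For the second formula the argument is the same with Proposition~\ref{prop:starkUnit}(ii) in place of (i): unwinding $\partial_L(1)/\partial_L(\tau)=N_{H/L_{(1)}}(\partial(\widehat 1)/\partial(\widehat\tau))$ and using $l_L^*\bigl(\partial_L(1)/\partial_L(\tau)\bigr)$ applied to $\chi$ yields, via (ii), $hw_\infty L(\cc\chi,0)(1-\chi(\widetilde\tau))$ (the factor $h$ in (ii) cancels the $1/h$ there), which is precisely $\chi(\omega\rho'_{(1)}(1-\widetilde\tau))$ since $\chi(\rho'_{(1)})=[L:L_{(1)}]\cdot[\,\cdot\,]$ — wait, more carefully: $\rho'_{(1)}=s(\Gal(L/L_{(1)}))$, and $l_L^*$ already kills the $e_G$-component, so the trivial character contributes nothing and for $\chi\neq 1$ factoring through $\Gal(L_{(1)}/K)$ one gets the clean identity, while for $\chi$ not factoring through $\Gal(L_{(1)}/K)$ both sides are zero. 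The well-definedness statement (independence of the lift $\widetilde\tau$) follows because $\rho'_{(1)}(1-\sigma)$ depends only on the image of $\sigma$ in $\Gal(L_{(1)}/K)$, as already noted in Remark~\ref{rem:gensinnott}(iii).

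I expect the main obstacle to be purely bookkeeping rather than conceptual: keeping the constants straight through the chain norm $\to$ inflation $\to$ truncation. Specifically, reconciling the factor $h$ built into the definition of $\varphi_{L,\mf n}$ and of $\partial_L$ (Remark~\ref{rem:hpower}, Remark~\ref{rem:w_K}(ii)) with the factor $hw_\infty$ in $\omega$ and the $1/w_\infty$, $1/(w_\infty h)$ in Proposition~\ref{prop:starkUnit}, and making sure the index $[L:L_\mf n]$ that appears when descending valuations through $N_{H_\mf n/L_\mf n}$ is exactly cancelled by the $\chi(s(\Gal(L/L_\mf n)))$ factor in $\chi(\rho'_\mf n)$ — with the caveat that $\ord_\infty$ on $L$ versus on $H_\mf n$ might differ by a ramification index at $\infty$, which is $1$ precisely because $\infty$ splits completely in $H_\mf m$, so the convention of Section~\ref{sec:kronecker} is doing real work here. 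A secondary subtlety is the vanishing case: one must check that when some $\mf p\mid\mf n$ is ramified in $L$ with $T_\mf p\not\subseteq\ker\chi$, the $\chi$-component of the left side genuinely vanishes — this is where one uses that $\alpha_\mf n$, being a norm from $H_\mf n$ where things are tamely controlled, has $\ord_\infty$-data that is annihilated by $1-\tau_\mf p$ on the nose, matching $\rho'_\mf n$.
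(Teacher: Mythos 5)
Your approach matches the paper's proof exactly: reduce to nontrivial characters $\chi$ and invoke Proposition~\ref{prop:starkUnit} for a direct componentwise computation. One parenthetical misstep worth correcting: $\chi(\mf p)=0$ does \emph{not} force $\chi(\rho'_\mf n)=0$ (it gives $1-\chi(\tau_\mf p)=1$, not $0$); the correct dichotomy is whether $\Gal(L/L_\mf n)\ssq\ker\chi$ --- if not, both sides vanish because $\varphi_{L,\mf n}\in L_\mf n^\times$ on the left and $\chi(s(\Gal(L/L_\mf n)))=0$ on the right, while if so, any $\mf p\mid\mf n$ with $T_\mf p\nssq\ker\chi$ contributes a harmless Euler factor $1-\chi(\mf p)=1$ on both sides, so no separate vanishing argument is needed.
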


\begin{proof}
It suffices to prove the equations on the $\chi$-component for each non-trivial character $\chi\in \widehat{G}$. This follows from Proposition \ref{prop:starkUnit} by a direct computation.
\end{proof}

\begin{kor}\label{kor:imagelognumbers}
We have $l^*_L(P_L)=\omega\cdot U'_0$.
\end{kor}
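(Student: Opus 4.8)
The plan is to reduce the statement to Proposition \ref{prop:imglog} together with the explicit set of generators of $U'_0$ recorded in Remark \ref{rem:gensinnott}. By definition, $P_L$ is the $\bb Z[G]$-submodule of $L^\times$ generated by $\mu(L)$, the unramified elliptic units $\partial_L(\sigma_1)/\partial_L(\sigma_2)$ with $\sigma_1,\sigma_2\in\Gal(L_{(1)}/K)$, and the ramified elliptic numbers $\varphi_{L,\mf n}$ for $\mf n\mid\mf m$, $\mf n\neq(1)$. Since $l_L^*$ kills $\mu(L)$ (roots of unity have trivial valuation at every place over $\infty$), and since $l_L^*$ is $\bb Z[G]$-linear (here I would note that $l_L^*(x^\sigma)=\sigma^{-1}l_L^*(x)$, which is immediate from the compatibility of the $\ord_\infty$-convention of Section \ref{sec:kronecker} with the Galois action), the image $l_L^*(P_L)$ is the $\bb Z[G]$-submodule of $\bb Q[G]$ generated by the images of these generators.

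First I would handle the ramified part: by Proposition \ref{prop:imglog}, $l_L^*(\varphi_{L,\mf n})=\omega\rho'_\mf n$ for each $\mf n\mid\mf m$, $\mf n\neq(1)$. Next, for the unramified part, I would reduce $\partial_L(\sigma_1)/\partial_L(\sigma_2)$ to the basic elements $\partial_L(1)/\partial_L(\tau)$ using the cocycle-type relation $(\partial_L(\sigma_1)/\partial_L(\sigma_2))$ can be written multiplicatively in terms of $\partial_L(1)/\partial_L(\sigma_i)$ (the analogue of the Lemma giving $(\partial(\sigma_1)/\partial(\sigma_2))^\sigma=\partial(\sigma_1\sigma)/\partial(\sigma_2\sigma)$, descended to $L_{(1)}$), so that modulo $\mu(L)$ the whole unramified group $\Delta_L$ is generated as a $\bb Z[G]$-module by the $\partial_L(1)/\partial_L(\tau)$, $\tau\in\Gal(L_{(1)}/K)$. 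Then Proposition \ref{prop:imglog} gives $l_L^*(\partial_L(1)/\partial_L(\tau))=\omega\rho'_{(1)}(1-\widetilde\tau)$. Assembling these, $l_L^*(P_L)$ is the $\bb Z[G]$-module generated by $\{\omega\rho'_\mf n:\mf n\mid\mf m,\ \mf n\neq(1)\}\cup\{\omega\rho'_{(1)}(1-\widetilde\tau):\tau\in\Gal(L_{(1)}/K)\}$, which equals $\omega$ times the $\bb Z[G]$-module generated by those same $\rho$-elements.

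Finally I would identify this last module with $U'_0$. By Remark \ref{rem:gensinnott}(ii) it suffices to use $\rho'_\mf n$ with $\mf n\mid\mf m$, and by (iii) the elements $\rho'_\mf n$ with $\mf n\neq(1)$ already lie in $U'_0$, while the intersection of the $\rho'_{(1)}$-component of $U'$ with $U'_0$ is precisely generated by the $\rho'_{(1)}(1-\widetilde\tau)$; hence the $\bb Z[G]$-module generated by $\{\rho'_\mf n:\mf n\neq(1)\}\cup\{\rho'_{(1)}(1-\widetilde\tau)\}$ is exactly $U'_0$. This yields $l_L^*(P_L)=\omega\cdot U'_0$. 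The step I expect to require the most care is the bookkeeping showing that $\Delta_L$ modulo roots of unity is generated over $\bb Z[G]$ by the $\partial_L(1)/\partial_L(\tau)$ and that, on the Sinnott-module side, $U'_0$ is generated exactly by the advertised elements — i.e. pinning down that no further generators are needed and that $\omega$ is not a zero-divisor on the relevant component so that multiplication by $\omega$ does not collapse anything; this is the content borrowed from the structure of $U'$ in Remark \ref{rem:gensinnott}(iii) and the analogous argument in \cite{oukhaba2003index}.
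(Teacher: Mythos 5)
Your proof is correct and is essentially the paper's own argument (the paper compresses it to ``follows directly from Remark~\ref{rem:gensinnott},'' implicitly together with Proposition~\ref{prop:imglog}, and you simply unfold those references). Two small remarks: the equivariance is $l_L^*(x^\sigma)=\sigma\, l_L^*(x)$, not $\sigma^{-1}l_L^*(x)$ (immaterial since $G$ is abelian and you only need that the image is a $\bb Z[G]$-submodule), and your worry about $\omega$ being a zero-divisor is unnecessary here --- the identity $l_L^*(P_L)=\omega U'_0$ follows purely from matching generator sets, with invertibility of $\omega$ on $\bb Q U'_0$ only becoming relevant later in Proposition~\ref{prop:indexsinnott}.
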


\begin{proof}
This follows directly from Remark \ref{rem:gensinnott}.
\end{proof}

\section{An index formula}\label{sec:oukhabaend}

We briefly recall the definition of Sinnott's Index (see \cite[§4]{oukhaba2003index}). Let $V$ be a finite-dimensional vector space over $L=\bb Q$ or $\bb R$. A subgroup $X$ of $V$ is a \emph{lattice} if $\rk_\bb Z(X)=\dim_L(V)$ and $L\cdot X=V$. If $A$ and $B$ are lattices of $V$ and $\gamma$ is an automorphism of $V$ such that $\gamma(A)=B$, then we define
\begin{align*}
[A:B]:=\abs{\det(\gamma)}.
\end{align*}
If $B\subseteq A$, then $[A:B]$ is the usual group index. Now we can prove

\begin{prop}[{cf.~\cite[Prop.~7]{oukhaba2003index}}]\label{prop:indexsinnott}
We have
\begin{align*}
[U'_0:l_L^*(P_L)]=\lrk \frac{hw_\infty}{d_\infty}\rrk^{[L:K]-1}\cdot \frac{w_K h_L R_L}{w_L h}
\end{align*}
\end{prop}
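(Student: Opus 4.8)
The plan is to compute the index $[U'_0 : l_L^*(P_L)]$ by realizing both lattices inside the $\mathbb{R}$-vector space $V := (1-e_G)\mathbb{R}[G]$, which has dimension $[L:K]-1$, and comparing them via an explicit automorphism. By Corollary \ref{kor:imagelognumbers} we have $l_L^*(P_L) = \omega \cdot U'_0$, so the multiplication-by-$\omega$ map $\gamma$ on $V$ sends $U'_0$ onto $l_L^*(P_L)$, and hence $[U'_0 : l_L^*(P_L)] = |\det(\gamma|_V)|$. The first step is therefore to check that $\omega$ acts invertibly on $V$: since $\omega = h w_\infty \sum_{\chi \neq 1} L(\bar\chi, 0) e_\chi$, this amounts to knowing $L(\bar\chi,0) \neq 0$ for all nontrivial $\chi \in \widehat G$. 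For a real abelian extension the field $K_\chi = L^{\ker\chi}$ is a nontrivial real extension of $K$, in particular not a constant field extension, so by Proposition \ref{prop:Lfunctions}(i) the function $L(\bar\chi,s)$ is entire, and the nonvanishing at $s=0$ follows from the analytic class number formula together with the regulator being nonzero (equivalently, from Kronecker's second limit formula, Proposition \ref{prop:starkUnit}, since the corresponding elliptic unit is not a root of unity).

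Next I would compute the determinant of $\gamma = (\omega \cdot)$ on $V$. Because $\omega$ is diagonalized by the idempotents $e_\chi$ with eigenvalue $h w_\infty L(\bar\chi, 0)$ on the $\chi$-line for $\chi \neq 1$, we get
\begin{align*}
\det(\gamma|_V) = \prod_{\chi \neq 1} h w_\infty L(\bar\chi, 0) = (h w_\infty)^{[L:K]-1} \prod_{\chi \neq 1} L(\bar\chi, 0).
\end{align*}
Then I would invoke the factorization of the Dedekind zeta function, Proposition \ref{prop:Lfunctions}(iv), to recognize $\prod_{\chi \neq 1} L(\chi, s) = \zeta_L(s)/\zeta_K(s)$, and pass to leading terms at $s=0$. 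Here one must be careful about the $S_\infty$-truncation: each $L(\chi,s)$ for $\chi\neq 1$ is already holomorphic and nonzero at $0$, whereas $\zeta_L$ and $\zeta_K$ have zeros at $s=0$ of orders $[L:K]$ and $1$ respectively (one for each archimedean-type place over $\infty$). Removing the Euler factors at the primes above $\infty$ converts $\zeta_L(s)$ into $\zeta_{L,S_\infty}(s)$, and one tracks that $\zeta_{L,S_\infty}^*(0)/\zeta_{K,\{\infty\}}^*(0) = \prod_{\chi\neq 1} L(\bar\chi,0)$ up to the explicit finite factor coming from the $[L:K]-1$ Euler factors $(1 - N\infty^{-s})$ evaluated near $s=0$, each contributing a factor of $\log(N\infty) = d_\infty \log q$ to the leading term.

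Carrying this out, the Analytic Class Number Formula (Theorem \ref{thm:analyticCNF}) gives $\zeta_{L,S_\infty}^*(0) = -(\log q)^{[L:K]-1} h_L R_L / w_L$ and its corollary gives $\zeta_{K,\{\infty\}}^*(0) = -h/w_K$. Dividing, dividing out the $[L:K]-1$ truncation factors each equal to $d_\infty \log q$, and substituting into the product formula for $\det(\gamma|_V)$ should yield
\begin{align*}
[U'_0 : l_L^*(P_L)] = (h w_\infty)^{[L:K]-1} \cdot \frac{w_K}{h} \cdot \frac{h_L R_L}{w_L} \cdot \frac{1}{(d_\infty)^{[L:K]-1}} = \left(\frac{h w_\infty}{d_\infty}\right)^{[L:K]-1} \cdot \frac{w_K h_L R_L}{w_L h},
\end{align*}
as claimed. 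The main obstacle I anticipate is the bookkeeping around the truncated versus untruncated $L$- and $\zeta$-functions: one has to match the order of vanishing and the precise power of $\log q$ and of $d_\infty$ on both sides, being consistent with the normalization of the regulator $R_L$ fixed in \eqref{eq:regs} (which differs from Rosen's by exactly $(\log q)^{[L:K]-1}$). Everything else — the diagonalization of $\omega$, the nonvanishing of $L(\bar\chi,0)$, and the identification $l_L^*(P_L) = \omega U'_0$ — is either immediate or already established in the excerpt.
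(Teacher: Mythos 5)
Your proposal is correct and takes essentially the same route as the paper: identify $l_L^*(P_L) = \omega\, U'_0$ via Corollary~\ref{kor:imagelognumbers}, compute $|\det(\omega)|$ on the non-trivial part of $\mathbb{Q}[G]$ by diagonalizing through the character idempotents, and evaluate $\prod_{\chi\neq 1} L(\bar\chi,0)$ from the factorization of $\zeta_L/\zeta_K$ combined with the analytic class number formula, tracking the $d_\infty\log q$ factors coming from the $\{\infty\}$-truncation. The paper simply inlines the nonvanishing observation at the very end (by noting $|\det(\omega)|\neq 0$) rather than up front, but otherwise the two arguments coincide step for step.
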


\begin{proof}
Using Proposition \ref{prop:Lfunctions}(iv) and
\begin{align*}
L'_{\{\infty\}}(\chi,0)=\log (N\infty)\cdot L(\chi,0)= d_\infty \log (q)\cdot L(\chi,0), 
\end{align*}
we obtain
\begin{align*}
\zeta^*_{L,S_\infty}(0)&=\zeta^*_{K,\{\infty\}}(0)\cdot \prod_{\chi\neq 1}L'_{\{\infty\}}(\chi,0)\\
&=- \frac{h}{w_K}(d_\infty\log (q))^{[L:K]-1}\prod_{\chi\neq 1} L(\chi,0).
\end{align*}
Together with the analytic class number formula \ref{thm:analyticCNF}, this yields
\begin{align*}
\prod_{\chi\neq 1} L(\chi,0)= \frac{w_Kh_LR_L}{w_L h d_\infty^{[L:K]-1}}.
\end{align*}

Therefore, we obtain
\begin{align*}
\abs{\det(\omega)}&=\prod_{\chi\neq 1} \chi(\omega)=(hw_\infty)^{[L:K]-1}\prod_{\chi\neq 1} L(\overline{\chi},0)\\
&=\lrk \frac{hw_\infty}{d_\infty}\rrk^{[L:K]-1}\cdot \frac{w_K h_L R_L}{w_L h},
\end{align*}
where the first equality follows from \cite[Lemma 1.2 (b)]{sinnott1980stickelberger}. Since this is non-zero, we find that multiplication by $\omega$ is an automorphism of $V=\bb Q\cdot U'_0$. By Corollary \ref{kor:imagelognumbers} we have $l_L^*(P_L)=\omega U'_0$ and hence the desired Sinnott index exists and is given by
\begin{align*}
[U'_0:l_L^*(P_L)]=[U'_0:\omega U'_0]=\abs{\det(\omega)}=\lrk \frac{hw_\infty}{d_\infty}\rrk^{[L:K]-1}\cdot \frac{w_K h_L R_L}{w_L h}.
\end{align*}
\end{proof}

Let $\mf p\mid \mf m$ be a prime ideal of $K$. The norm relation of Proposition \ref{prop:normrel} implies that $x_\mf p^{w_\infty/w_K[H:L_{(1)}]}\in P_L$, where $x_\mf p$ is a generator of $\mf p^h$.

\begin{defn}\label{defn:QL}
Let $Q_L$ be the subgroup of $P_L$ generated by $\mu(L), \Delta_L$ and the elements $x_\mf p^{w_\infty/w_K[H:L_{(1)}]}$ for all $\mf p\mid \mf m$.
\end{defn}

Now we can state the next

\begin{prop}[{cf.~\cite[Prop.~8]{oukhaba2003index}}]\label{prop:indexlog}
We have
\begin{align*}
[l_L^*(P_L):l_L(C_L)]=\frac{\prod_\mf p [L\cap H_{\mf p^\infty}:L_{(1)}]}{[P_L^{w_L}\cap K:Q_L^{w_L}\cap K]}
\end{align*}
where $\mf p$ runs through all maximal ideals of $\mc O_K$. 
\end{prop}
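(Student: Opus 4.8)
The strategy is to relate the two lattices $l_L^*(P_L)$ and $l_L(C_L)$ by tracking exactly how the kernel and image of the logarithm maps differ when we pass from the full group of elliptic numbers $P_L$ to its unit part $C_L$. First I would recall that $C_L=P_L\cap\mc O_L^\times$, so the discrepancy between $l_L(C_L)$ and $l_L(P_L)$ is governed by the non-unit elements of $P_L$, which by the Corollary to $\varphi_{L,\mf p^k}$ are precisely the elements whose $\ord_\infty$-data records nontrivial prime contributions at ramified primes. The plan is to use the auxiliary group $Q_L$ from Definition \ref{defn:QL}: by construction $Q_L\subseteq P_L$ is generated by $\mu(L)$, $\Delta_L$ (all units) and the elements $x_\mf p^{w_\infty/w_K[H:L_{(1)}]}$ (which lie in $K$), so $Q_L\cap\mc O_L^\times$ is easy to identify and the quotient $P_L/Q_L$ captures exactly the ramified directions.

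Next I would split the index multiplicatively as
\begin{align*}
[l_L^*(P_L):l_L(C_L)]=\frac{[l_L^*(P_L):l_L^*(Q_L)]}{[l_L(C_L):l_L(Q_L\cap\mc O_L^\times)]}\cdot[\text{correction terms}],
\end{align*}
more precisely by comparing $P_L$, $Q_L$ and their unit parts simultaneously via the snake-type diagram relating $l_L$ on each of these groups. The key computation is $[l_L^*(P_L):l_L^*(Q_L)]$: using Proposition \ref{prop:normrel}, the image of $\varphi_{L,\mf p^k}$ under $l_L$ differs from that of the generator $x_\mf p$ exactly by the factor $[H:L_{(1)}]hw_\infty/w_K$ together with the distribution relations among the $\varphi_{L,\mf p^k}$ for varying $k$; summing these contributions over all $k$ (i.e. over the tower $H_{\mf p^\infty}$) produces the factor $[L\cap H_{\mf p^\infty}:L_{(1)}]$ for each $\mf p$, which is where the numerator $\prod_\mf p[L\cap H_{\mf p^\infty}:L_{(1)}]$ comes from. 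Meanwhile the denominator $[P_L^{w_L}\cap K:Q_L^{w_L}\cap K]$ arises because the kernel of $l_L$ on $P_L$ (resp. $Q_L$) consists of elements of $K$ of absolute value $1$ at $\infty$ — i.e. roots of unity up to the $w_L$-th power ambiguity — so the failure of $l_L$ to be injective on $P_L$ versus $Q_L$ is measured exactly by that index of subgroups of $K$. One needs the $w_L$-th powers to kill the $\mu(L)$-ambiguity inherent in the definitions (cf. Remark \ref{rem:w_K}).

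Finally I would assemble the pieces: $l_L^*(P_L)\supseteq l_L^*(Q_L)$ with index $\prod_\mf p[L\cap H_{\mf p^\infty}:L_{(1)}]$ (after accounting for the kernels), while $l_L(C_L)=l_L^*(C_L)$ since $C_L$ consists of units and $l_L$ already lands in the trace-zero part for units, and $C_L\supseteq Q_L\cap\mc O_L^\times$ with the relevant index controlled by $[P_L^{w_L}\cap K:Q_L^{w_L}\cap K]$; combining gives the stated formula. \textbf{The main obstacle} will be the careful bookkeeping in the ramified tower: one must show that the subgroup of $\bb Q[G]$ generated by the $l_L^*(\varphi_{L,\mf p^k})$ for all $k\ge 1$, modulo the part coming from $Q_L$, has index exactly $[L\cap H_{\mf p^\infty}:L_{(1)}]$ — this requires using the precise norm relations of Proposition \ref{prop:normrel} in the case $\mf p\mid\mf n$ (where the norm is the identity) versus $\mf p\nmid\mf n$, and matching the resulting index with the degree of the maximal $\mf p$-ramified subextension of $L$, exactly as in the imaginary quadratic computation of \cite[Prop.~8]{oukhaba2003index}. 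Tracking the $w_L$-th power twists consistently so that the kernel contributions land in $K$ and not in a larger field is the other delicate point.
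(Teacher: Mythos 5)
Your plan follows the same underlying approach as the paper, which however gives an extremely minimal proof: it simply refers to the proof of \cite[Prop.~8]{oukhaba2003index} and records that the two inputs which could plausibly differ in the function field case, namely (i) $\ker(l_L)\cap\mc O_L^\times=\mu(L)$ and (ii) $l_L(C_L)=l_L^*(C_L)$, do carry over. Your sketch implicitly uses both facts (you verify (ii) via the product formula for units, and you invoke (i) when discussing the kernel), so the content lines up with what the paper relies on, and your reconstruction of Oukhaba's argument via the intermediate group $Q_L$ and the ramified tower $H_{\mf p^\infty}$ is the right skeleton.

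A few points are stated somewhat loosely and would need tightening. First, you conflate $\ker(l_L)$ with $\ker(l_L^*)$ in places: on $P_L$ the relevant kernel for producing the denominator is that of $l_L^*$, which contains all of $P_L\cap K^\times$ (since $l_L^*$ annihilates $K^\times$), not just absolute-value-one elements; the phrase ``elements of $K$ of absolute value $1$ at $\infty$'' describes $\ker(l_L)$, not $\ker(l_L^*)$. Second, the role of the $w_L$-th powers is not really about ``ambiguity in the definitions'' so much as passing cleanly from $P_L\cap K^\times\mu(L)$ to $P_L^{w_L}\cap K$ so that the index is taken between subgroups of $K^\times$ without the $\mu(L)$-torsion getting in the way; your instinct is right but the justification should be phrased in those terms. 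Third, the key numerator computation — showing that the $l_L^*(\varphi_{L,\mf p^k})$ for varying $k$ contribute exactly a factor $[L\cap H_{\mf p^\infty}:L_{(1)}]$ relative to $Q_L$ — is the genuinely delicate part and you correctly flag it, but it does need to be carried out (it is the substance of Oukhaba's Proposition 8, not merely a bookkeeping remark). As a whole, though, the proposal is consistent with the paper's intent and would, if filled in, reproduce the argument the paper is citing.
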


\begin{proof}
We can use the proof of \cite[Prop.~8]{oukhaba2003index} here. The essential inputs 
\begin{enumerate}[label=(\roman*)]
\item $\ker(l_L)\cap \mc O_L^\times=\mu(L)$,

\item $l_L(C_L)=l_L^*(C_L)$,

\end{enumerate}
also hold in the function field case.
\end{proof}

Now we can state the desired index formula (cf.~Theorem A):

\begin{thm}[{cf.~\cite[Thm.~1]{oukhaba2003index}}]\label{thm:indexformula}
Set $d(L):=[P_L^{w_L}\cap K:Q_L^{w_L}\cap K]$. Then we get
\begin{align*}
[\mc O_L^\times:C_L]=\frac{(hw_\infty)^{[L:K]-1}w_Kh_L}{w_Lh}\frac{\prod_\mf p [L\cap H_{\mf p^\infty}:L_{(1)}]}{[L:L_{(1)}]}\frac{[\bb Z[G]:U']}{d(L)}\ .
\end{align*}
\end{thm}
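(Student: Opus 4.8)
The plan is to chain together the three index computations already at hand and then bookkeep the finitely many correction factors. The key observation is that the chain of groups
\begin{align*}
C_L \;\subseteq\; P_L, \qquad l_L(C_L) \;\subseteq\; l_L^*(P_L) \;=\; \omega U'_0 \;\subseteq\; U'_0 \;\subseteq\; U' \;\subseteq\; \bb Z[G]
\end{align*}
(where $l_L^*(P_L)=\omega U'_0$ is Corollary \ref{kor:imagelognumbers} and the equality $l_L(C_L)=l_L^*(C_L)$ is recalled in the proof of Proposition \ref{prop:indexlog}) lets me pass from the unit index on the left to the Sinnott index $[\bb Z[G]:U']$ on the right. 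First I would establish the exact sequence relating $[\mc O_L^\times:C_L]$ to $[l_L(\mc O_L^\times):l_L(C_L)]$ and $[\mu(L):\mu(L)\cap C_L]$: since $\ker(l_L)\cap\mc O_L^\times=\mu(L)$ (input (i) of Proposition \ref{prop:indexlog}) and $\mu(L)\subseteq C_L$ by definition of $C_L$, one gets $[\mc O_L^\times:C_L]=[l_L(\mc O_L^\times):l_L(C_L)]$ directly. The lattice $l_L(\mc O_L^\times)$ has a determinant comparison with $U'_0$ governed by the regulator $R_L$ and the factor $d_\infty^{[L:K]-1}$ coming from the normalization in the definition of $R_L$; this is exactly the kind of computation already carried out inside Proposition \ref{prop:indexsinnott}.

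Next I would multiply through the intermediate indices. From Proposition \ref{prop:indexsinnott} we have $[U'_0:l_L^*(P_L)] = (hw_\infty/d_\infty)^{[L:K]-1}\cdot w_Kh_LR_L/(w_Lh)$, and from Proposition \ref{prop:indexlog} we have $[l_L^*(P_L):l_L(C_L)] = \prod_\mf p[L\cap H_{\mf p^\infty}:L_{(1)}] / d(L)$. Combining,
\begin{align*}
[U'_0:l_L(C_L)] \;=\; \lrk\frac{hw_\infty}{d_\infty}\rrk^{[L:K]-1}\cdot\frac{w_Kh_LR_L}{w_Lh}\cdot\frac{\prod_\mf p[L\cap H_{\mf p^\infty}:L_{(1)}]}{d(L)}\ .
\end{align*}
It then remains to relate $[U'_0:l_L(C_L)]$ to $[\mc O_L^\times:C_L]=[l_L(\mc O_L^\times):l_L(C_L)]$, i.e.\ to compute $[U'_0:l_L(\mc O_L^\times)]$. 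Here I would use that $l_L(\mc O_L^\times)$ is, up to the scaling factor $d_\infty$ on each coordinate, the standard "unit lattice" sitting inside the augmentation-type submodule of $\bb Q[G]$ cut out by $S_\infty$, and that $U'_0$ is the kernel of multiplication by $s(G)$ on $U'$; a determinant computation with the basis $u_1,\dots,u_{[L:K]-1}$ identifies $[U'_0:l_L(\mc O_L^\times)]$ with $R_L^{-1}d_\infty^{-([L:K]-1)}[U'_0:(\text{standard lattice})]$, and the latter index is $[\bb Z[G]:U']^{-1}[L:L_{(1)}]$ — the factor $[L:L_{(1)}]$ arising because $\rho'_{(1)}$ already contains the idempotent-like element $s(\Gal(L/L_{(1)}))$, so the "$(1)$-component" of $U'$ is smaller than all of $\bb Z[G]$ by exactly that factor. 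This is the step I expect to require the most care.

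Putting the two displays together, the $R_L$'s cancel, the $d_\infty$'s cancel, and I am left with
\begin{align*}
[\mc O_L^\times:C_L] \;=\; \frac{(hw_\infty)^{[L:K]-1}w_Kh_L}{w_Lh}\cdot\frac{\prod_\mf p[L\cap H_{\mf p^\infty}:L_{(1)}]}{[L:L_{(1)}]}\cdot\frac{[\bb Z[G]:U']}{d(L)}\ ,
\end{align*}
which is the claim. The main obstacle is the precise determinant bookkeeping in the second-to-last step: one must be careful that the automorphism $\gamma$ of $V=\bb Q\cdot U'_0$ sending $l_L(\mc O_L^\times)$ to the relevant sublattice of $U'$ genuinely has $|\det\gamma|$ equal to the stated ratio, which hinges on correctly matching the $d_\infty$-normalization in $l_L$ against the one in the definition of $R_L$, and on pinning down the index of the $(1)$-isotypic piece of $U'$ inside $\bb Z[G]$; I would handle this exactly as in the proof of \cite[Thm.~1]{oukhaba2003index}, whose argument transfers verbatim once Propositions \ref{prop:indexsinnott} and \ref{prop:indexlog} are in place.
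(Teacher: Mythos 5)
Your strategy is precisely the paper's: reduce to $[l_L(\mc O_L^\times):l_L(C_L)]$ via $\ker(l_L)\cap\mc O_L^\times=\mu(L)$, chain through $U'_0$ using Propositions \ref{prop:indexsinnott} and \ref{prop:indexlog}, and compare $l_L(\mc O_L^\times)$ to $U'_0$ by way of $[R_0:l_L(\mc O_L^\times)]=R_L/d_\infty^{[L:K]-1}$ and $[R_0:U'_0]=[R:U']/[L:L_{(1)}]$. One bookkeeping slip to fix: you wrote $[U'_0:l_L(\mc O_L^\times)] = R_L^{-1}d_\infty^{-([L:K]-1)}[U'_0:R_0]$, but the determinant computation gives $R_L$, not $R_L^{-1}$, in that expression; with $R_L^{-1}$ the regulators would not cancel and you would be off by $R_L^2$, so double-check the direction of the generalized index there before carrying out the final product.
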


\begin{proof}
Let $R=\bb Z[G]$ and $R_0$ be the kernel of multiplication with $s(G)$ in $R$. Since $\ker(l_L)\cap \mc O_L^\times=\mu(L)$ we get
\begin{align*}
[\mc O_L^\times:C_L]&=[l_L(\mc O_L^\times):l_L(C_L)]=[l_L(\mc O_L^\times):R_0][R_0:l_L(C_L)]\\
&=\frac{[R_0:U'_0]}{[R_0:l_L(\mc O_L^\times)]}[U'_0:l_L(C_L)]\\
&=\frac{[R_0:U'_0]}{[R_0:l_L(\mc O_L^\times)]}[U'_0:l_L^*(P_L)][l_L^*(P_L):l_L(C_L)].
\end{align*}
Note that all the indices above are defined, since each of the $\bb Z$-modules has the same rank. By definition of Sinnott's index, one can easily show that
\begin{align*}
[R_0:l_L(\mc O_L^\times)]=\abs{\det(A)},
\end{align*}
where $A$ is the matrix with entries
\begin{align*}
(\ord_w(u_i))_{\substack{w\in S_\infty(L)\setminus\{w_0\}\\i\in \{1,...,[L:K]-1\}}},
\end{align*}
where $w_0$ is an arbitrary place in $S_\infty(L)$ and the units $u_1,...,u_{[L:K]-1}\in \mc O_L^\times$ project to a basis of $\mc O_L^\times/\mu(L)$. By the definition of the regulator, we hence get
\begin{align*}
R_L=\abs{\det(-d_\infty A)}=d_\infty^{[L:K]-1}\abs{\det(A)},
\end{align*}
so
\begin{align*}
[R_0:l_L(\mc O_L^\times)]=\frac{R_L}{d_\infty^{[L:K]-1}}.
\end{align*}
As in \cite{oukhaba2003index} we get that
\begin{align*}
[R_0:U'_0]=\frac{[R:U']}{[L:L_{(1)}]}\ .
\end{align*}
Using these computations and the results of the Propositions \ref{prop:indexsinnott} and \ref{prop:indexlog} we obtain
\begin{align*}
[\mc O_L^\times:C_L]=\frac{(hw_\infty)^{[L:K]-1}w_Kh_L}{w_Lh}\frac{\prod_\mf p [L\cap H_{\mf p^\infty}:L_{(1)}]}{[L:L_{(1)}]}\frac{[R:U']}{d(L)}\ .
\end{align*}

\end{proof}

We state some results on $[R:U']$ similar to \cite[§6,§7]{oukhaba2003index}:

\begin{prop}\label{prop:RU}
\begin{enumerate}[label=(\roman*)]
\item The index $[R:U']$ is an integer divisible only by primes dividing $[L:L_{(1)}]$. Moreover, if $\Gal(L/L_{(1)})$ is the direct product of its inertia groups or if at most two primes ramify in $L/K$ then $[R:U']=1$.

\item If $G$ is cyclic, then $[R:U']=1$.

\item If $L=H_\mf m$ for some integral ideal $\mf m=\prod_{i=1}^s \mf p_i^{e_i}$ for some $s\geq 3$ and $h$ is coprime to $w_K$, we get
\begin{align*}
[R:U']=w_K^{e(2^{s-2}-1)},
\end{align*}
where $e$ is the index of the subgroup generated by the classes of $\mf p_i$ in $Cl(K)$.
\end{enumerate}
\end{prop}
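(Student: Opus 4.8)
The plan is to compute the index $[R:U']$, where $R = \bb Z[G]$ and $U'$ is Sinnott's module, following the strategy of \cite[§6,§7]{oukhaba2003index} and ultimately \cite{sinnott1980stickelberger}, which works purely on the level of group rings and therefore transfers without change to the function field setting. The key observation is that the generators $\rho'_\mf n$ of $U'$ depend only on the inertia groups $T_\mf p$ and the Frobenius lifts $\sigma_\mf p$ for the ramified primes $\mf p \mid \mf m$, and the structure of the decomposition $G = \Gal(L/L_{(1)}) \times (\text{something})$ (more precisely, $\Gal(L/L_{(1)})$ is generated by the inertia groups). So the entire computation is a question about a finite abelian group $G$ together with a distinguished collection of subgroups (the $T_\mf p$) and elements (the $\sigma_\mf p$), and one studies the $\bb Z[G]$-submodule of $\bb Q[G]$ they generate.

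For part (i): First I would note that $U' \supseteq \rho'_{(1)} \bb Z[G] = s(\Gal(L/L_{(1)})) \bb Z[G]$, and more generally that after inverting any prime $\ell \nmid [L:L_{(1)}]$ the idempotents $e_{T_\mf p}$ lie in $\bb Z_{(\ell)}[G]$, so that localizing at $\ell$ one can show $U' \otimes \bb Z_{(\ell)} = R \otimes \bb Z_{(\ell)}$ by exhibiting $1$ as a $\bb Z_{(\ell)}[G]$-combination of the $\rho'_\mf n$; this gives that $[R:U']$ is only divisible by primes dividing $[L:L_{(1)}]$. The two special cases — $\Gal(L/L_{(1)})$ a direct product of inertia groups, or at most two ramified primes — are handled exactly as in \cite{sinnott1980stickelberger} and \cite{oukhaba2003index}: in the first case one directly checks $U' = R$ by a telescoping/inclusion-exclusion argument over the $e_{T_\mf p}$; in the second case ($s \le 2$) there is nothing to the inclusion-exclusion and again $U' = R$. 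For part (ii), if $G$ is cyclic then $\Gal(L/L_{(1)})$ is cyclic, hence (being generated by the inertia groups of the ramified primes) it equals one of them, or a product in which one contains the others — in any case $\Gal(L/L_{(1)})$ is the direct product of (a subset of) its inertia groups in the trivial sense, so (i) applies and $[R:U']=1$. Part (iii) is the genuinely computational case: here $L = H_\mf m$, the inertia group of $\mf p_i$ in $\Gal(H_\mf m/K)$ is (the image of) $(\mc O_K/\mf p_i^{e_i})^\times$, and one reads off $[R:U']$ from the explicit structure of $U'$ inside $\bb Q[G]$ via characters, exactly reproducing the count in \cite[§7]{oukhaba2003index} with the only arithmetic input being $[H_\mf m:K] = \frac{h}{w_K}|(\mc O_K/\mf m)^\times|$ (from the Remark after the preliminaries) and the hypothesis $\gcd(h, w_K) = 1$; the factor $w_K^{e(2^{s-2}-1)}$ comes from the interplay of the $s$ inertia groups with the class-group relation, measured by $e$.

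Concretely the steps are: (1) reduce to a statement about the group ring $\bb Q[G]$ with distinguished subgroups, invoking inflation-invariance and Remark \ref{rem:gensinnott}(ii) to restrict to $\mf n \mid \mf m$; (2) prove the divisibility claim in (i) by localization at primes $\ell \nmid [L:L_{(1)}]$; (3) prove $U' = R$ in the two special cases of (i) and deduce (ii); (4) carry out the character-by-character computation for $L = H_\mf m$ to get (iii), using the Hayes index formula for $[H_\mf m:K]$. The main obstacle will be step (4): one must carefully track how the idempotents $e_{T_{\mf p_i}}$ and the Frobenius lifts $\sigma_{\mf p_i}$ interact across all $2^s$ subsets in the inclusion-exclusion defining the $\rho'_\mf n$, and correctly identify the $w_K$-torsion contribution governed by $e$; this is where the coprimality hypothesis $\gcd(h,w_K)=1$ is essential and where the argument differs in bookkeeping (though not in substance) from the imaginary quadratic case, since there the analogue of $w_K$ is $2$ and several collapses happen automatically that here must be justified for general $q$.
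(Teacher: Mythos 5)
Your overall strategy matches the paper's: the paper's proof of this proposition simply cites \cite[Prop.~16]{oukhaba2003index} for (i), \cite[Thm.~5.3]{sinnott1980stickelberger} for (ii), and \cite[Prop.~18]{oukhaba2003index} for (iii), observing (as you do) that these arguments depend only on the group structure of $G$ together with the distinguished inertia subgroups and Frobenius lifts, and hence transfer verbatim to the function field setting. Your sketches of (i) and (iii) are consistent with what those references actually do.

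However, your deduction of (ii) from (i) has a genuine gap. Part (i) requires that $\Gal(L/L_{(1)})$ be the direct product of \emph{all} its inertia subgroups, i.e.\ that the natural map $\prod_{\mf p \mid \mf m} T_\mf p \to \Gal(L/L_{(1)})$ be an isomorphism. When $G$ is cyclic and three or more primes ramify, the inertia subgroups form a chain $T_{\mf p_1} \supseteq T_{\mf p_2} \supseteq \cdots$, so $\Gal(L/L_{(1)}) = T_{\mf p_1}$; this is \emph{not} a direct product of the $T_{\mf p_i}$ once a second one is nontrivial, since then $T_{\mf p_1} \cap T_{\mf p_2} \neq 1$. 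Your phrase ``the direct product of (a subset of) its inertia groups in the trivial sense'' is not what (i) asserts, and the smaller inertia subgroups still appear in the generators $\rho'_\mf n$ of $U'$, so one cannot simply ignore them. The cyclic case is a standalone theorem (Sinnott's Theorem~5.3), proven by a direct analysis specific to cyclic $G$ rather than by reduction to the direct-product case; you should cite it rather than attempt to reduce (ii) to (i). (Note that if only two primes ramify, (ii) already follows from the ``at most two primes'' clause of (i), so the content of (ii) is precisely the case of three or more ramified primes with a chain of inertia groups — exactly where your reduction breaks.)
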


\begin{proof}
\begin{enumerate}[label=(\roman*)]
\item This is \cite[Prop.~16]{oukhaba2003index}.

\item This is \cite[Thm.~5.3]{sinnott1980stickelberger}.

\item This is \cite[Prop.~18]{oukhaba2003index}.
\end{enumerate}
Note that the arguments are only based on the group structure of $G$ and hence can also be applied in the case of function fields.
\end{proof}

\begin{bem}\label{rem:dL}
There is a list of cases in \cite[Remark 2]{oukhaba2003index} in which he gets $d(L)=1$. With similar methods we can show that if one of the following conditions holds, we have $d(L)=1$:
\begin{enumerate}[label=(\roman*)]
\item $L\ssq H$,
\item $H\ssq L$,
\item $[H:L_{(1)}]$ and $[L:L_{(1)}]$ are coprime,
\item $\Gal(L/L_{(1)})$ is cyclic,
\item $\Gal(L/L_{(1)})$ is the direct product of its inertia subgroups,
\item at most two primes ramify in $L/K$.
\end{enumerate}
\end{bem}

\begin{bem}
\begin{enumerate}[label=(\roman*)]
\item In \cite{oukhaba1992groups} H. Oukhaba defines a group $\mc E_L$ of elliptic units in an unramified extension $L/K$. He also shows that the elements of $\mc E_L^{w_Kw_\infty h}$ are of the form
\begin{align*}
\prod_{\tau\in G} \lrk\frac{\partial_L(1)\partial_L(\tau\sigma^{-1})}{\partial_L(\sigma^{-1})\partial_L(\tau)}\rrk^{w_Km_\tau}
\end{align*}
for $\sigma\in G$ and certain rational numbers $m_\tau\in \bb Q$ (cf.~Prop.~3.6 in loc.~cit.). He also derives an index formula in this case:
\begin{align*}
[\mc O_L^\times:\mc E_L]=\frac{h_L}{[H:L]}.
\end{align*}
In this case, our index formula yields
\begin{align*}
[\mc O_L^\times:C_L]=(hw_\infty)^{[L:K]-1}\frac{w_K h_L}{w_Lh}.
\end{align*}
From the above description, we find that $\mc E_L^{w_\infty h}\ssq C_L$ and we get
\begin{align*}
[C_L:\mc E_L^{w_\infty h}]=h\frac{w_L}{w_K}.
\end{align*}

\item In \cite{yin1997index} L. Yin defines a group $\overline{C}$ of extended cyclotomic units in the ray class fields $K_\mf m$. The ramified elliptic units in this article are in fact norms of Yin's cyclotomic units. However our construction of the unramified units is quite different to the one in \cite{yin1997index}. Nevertheless, Yin also computes an index formula
\begin{align*}
[\mc O_{H_\mf m}^\times:(\overline{C}\cap \mc O_{H_\mf m}^\times)]=w_K^ah_{H_\mf m},
\end{align*}
where $a=0$ if $s\leq 2$ and $a=e(2^{s-2}-1)-(s-2)$ if $s\geq 3$. Note that there is the additional assumption $(h,w_K)=1$ in the case $s\geq 3$. With these assumptions, we get from our index formula
\begin{align*}
[\mc O_{H_\mf m}^\times:C_{H_\mf m}]=(hw_\infty)^{[H_\mf m:K]-1}\frac{w_K h_{H_\mf m}}{w_\infty h}w_K^{-(s-1)}[R:U'].
\end{align*}
With Proposition \ref{prop:RU}, this yields
\begin{align*}
[\mc O_{H_\mf m}^\times:C_{H_\mf m}]=(hw_\infty)^{[H_\mf m:K]-2}w_K^ah_{H_{\mf m}}.
\end{align*}
\end{enumerate}
\end{bem}

\section{A nontrivial root of an elliptic unit}\label{sec:CK}

With this definition of elliptic units we can prove an analogue of the main result of \cite{chapdelaine2017annihilators} in the case of global function fields.

\subsection{Preliminaries}

We use the notation from Section \ref{sec:notat1} with the following additional assumptions:
\begin{itemize}
\item Suppose $p$ is an odd prime such that $p\nmid q\cdot(q-1)\cdot h$. 

\item $L$ is a real cyclic extension of $K$ of degree $p^k$ for some positive integer $k$.

\item We change the notation to $\Gamma:=\Gal(L/K)$. Let $\sigma$ be a generator of $\Gamma$.
\end{itemize}

\begin{bem}
Note that the assumption on $L$ and $p\nmid h$ are exactly the same as in \cite{chapdelaine2017annihilators}. The assumption $p\nmid (q-1)=w_K$ is also implied by the hypotheses stated there. The only new assumption is $p\nmid q$, i.e. we suppose that $p$ is prime to the characteristic of $K$, which is a natural hypothesis when dealing with function fields.
\end{bem}

Note that since $p\nmid h$, we have 
\begin{align*}
L\cap H=K
\end{align*}
and we assume that there are exactly $s\geq 2$ primes $\mf p_1,...,\mf p_s$ of $K$ which ramify in $L$. Set
\begin{itemize}
\item $I:=\{1,...,s\}$,

\item $x_j:=x_{\mf p_j}$ is a generator of $\mf p_j^h$,

\item $\mf P_j$ is a fixed prime ideal of $L$ over $\mf p_j$,

\item For any abelian extension $M/K$ let $D_j(M):=D_{\mf p_j}\ssq\Gal(M/K)$ be the decomposition group of $\mf p_j$ and $T_j(M):=T_{\mf p_j}\ssq D_j(M)$ be the inertia group of $\mf p_j$,

\item $t_j:=\abs{T_j(L)}$ is the ramification index of $\mf P_j$ over $\mf p_j$,

\item $n_j:=[G:D_j(L)]$.
\end{itemize}

Then it follows that $t_jn_j\mid p^k$ and 
\begin{align*}
\mf p_j\mc O_L=\prod_{i=0}^{n_j-1} \mf P_j^{t_j\sigma^i}\ .
\end{align*}
Since $p\nmid q$, this implies that the extension $L/K$ is tamely ramified and hence its conductor is square-free. Therefore the conductor is given by $\mf m:=\mf m_I:=\prod_{j\in I} \mf p_j$. 

\subsection{The distinguished subfields $F_j$}

For any subset $\emptyset\neq J\ssq I$ we set $\mf m_J:=\prod_{j\in J} \mf p_j$. With our previous observation we find that $L\ssq H_{\mf m_I}$. 

\begin{lem}
$L\ssq \prod_{j\in I} H_{\mf p_j}$.
\end{lem}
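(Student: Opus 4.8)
The plan is to exploit the inclusion $L\subseteq H_{\mf m_I}$ — already established just above this lemma, using that $L/K$ is tamely ramified of conductor $\mf m_I=\prod_{j\in I}\mf p_j$ and real — together with the fact that $[L:K]=p^k$ is a $p$-power whereas $p\nmid q-1=w_K$ by hypothesis. Concretely, set $M:=\prod_{j\in I}H_{\mf p_j}$. Since each $\mf p_j$ divides $\mf m_I$, monotonicity of ray class fields gives $M\subseteq H_{\mf m_I}$, and because $L/K$ (and $M/K$) is abelian we have $[L:L\cap M]=[LM:M]\mid[H_{\mf m_I}:M]$. So it suffices to show that $[H_{\mf m_I}:M]$ is prime to $p$: then $[L:L\cap M]$ divides both $[H_{\mf m_I}:M]$ and $[L:K]=p^k$, hence equals $1$, i.e. $L\subseteq M$.

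The heart of the argument is therefore the computation of $[H_{\mf m_I}:M]$. First I would note that all of $H=H_{(1)}$, the $H_{\mf p_j}$, $M$, and $H_{\mf m_I}$ contain the real Hilbert class field $H$. Using the degree formulas from the Remark in Section~\ref{sec:notat1} (namely $[H_{\mf a}:K]=\frac{h}{w_K}\lvert(\mc O_K/\mf a)^\times\rvert$ for $\mf a\neq(1)$ and $[H:K]=h$) together with the CRT isomorphism $(\mc O_K/\mf m_I)^\times\cong\prod_{j\in I}(\mc O_K/\mf p_j)^\times$, I get
\[
[H_{\mf m_I}:H]=\frac{1}{w_K}\prod_{j\in I}\lvert(\mc O_K/\mf p_j)^\times\rvert,\qquad [H_{\mf p_j}:H]=\frac{1}{w_K}\lvert(\mc O_K/\mf p_j)^\times\rvert.
\]
Next I would argue that the fields $H_{\mf p_j}$ are linearly disjoint over $H$: two real ray class fields of coprime conductor meet inside the ray class field of the gcd of their conductors, hence inside $H_{(1)}=H$, and an induction on $\lvert I\rvert$ upgrades this to $[M:H]=\prod_{j\in I}[H_{\mf p_j}:H]=w_K^{-s}\prod_{j\in I}\lvert(\mc O_K/\mf p_j)^\times\rvert$. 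Dividing, $[H_{\mf m_I}:M]=w_K^{\,s-1}=(q-1)^{s-1}$, which is coprime to $p$ since $p\nmid q-1$. Feeding this back into the first paragraph completes the proof.

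I expect the only genuinely delicate step to be this linear‑disjointness claim — pinning down that $H_{\mf p_i}H_{\mf p_j}\cap H_{\mf p_\ell}\subseteq H$ for distinct primes and hence that $M/H$ has the full product degree. One clean way is the standard relation $H_{\mf a}\cap H_{\mf b}\subseteq H_{\mf a+\mf b}$ for ray class fields, applied inductively; alternatively one can read it off at the level of Galois groups from $\Gal(H_{\mf m_I}/H)\cong(\mc O_K/\mf m_I)^\times/\mu(K)\cong\bigl(\prod_{j}(\mc O_K/\mf p_j)^\times\bigr)/\mu(K)$ and the natural restriction maps to the factors $\Gal(H_{\mf p_j}/H)\cong(\mc O_K/\mf p_j)^\times/\mu(K)$, where the kernel is exactly $\mu(K)^s/\mu(K)$ (the diagonal copy), of order $w_K^{\,s-1}$. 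Everything else is bookkeeping with the degree formulas already quoted.
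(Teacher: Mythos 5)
Your proposal is correct and follows essentially the same route as the paper: both compute $[H_{\mf m_I}:\prod_{j\in I}H_{\mf p_j}]=w_K^{s-1}$ via the degree formula $[H_{\mf a}:K]=\frac{h}{w_K}\lvert(\mc O_K/\mf a)^\times\rvert$ and CRT, and then conclude from $p\nmid w_K$ and $[L:K]=p^k$. The only cosmetic difference is in how linear disjointness of the $H_{\mf p_j}$ over $H$ is justified — the paper notes that $H_{\mf p_j}\cap\prod_{i<j}H_{\mf p_i}=H$ by looking at ramification at $\mf p_j$, whereas you propose either the containment $H_{\mf a}\cap H_{\mf b}\ssq H_{\mf a+\mf b}$ or a direct kernel computation for the restriction map $\Gal(H_{\mf m_I}/H)\to\prod_j\Gal(H_{\mf p_j}/H)$; all three give the same $w_K^{s-1}$ and are interchangeable here.
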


\begin{proof}
By class field theory, we have a canonical isomorphism
\begin{align*}
\Gal(H_{\mf m}/H)\cong (\mc O_K/\mf m)^\times /\im (\mu(K)),
\end{align*}
(see e.g.~\cite[Eq.~(3.1)]{hayes1985stickelberger}). With the Chinese Remainder Theorem, we get
\begin{align*}
[H_{\mf m}:\prod_{j\in I}H_{\mf p_j}]&= \frac{[H_\mf m:H]}{[\prod_{j\in I} H_{\mf p_j}:H]}=\frac{\abs{(\mc O_K/\mf m)^\times}/w_K}{\prod_{j\in I}[H_{\mf p_j}:H]}=\frac{\prod_{j\in I}\abs{(\mc O_K/\mf p_j)^\times}}{w_K\prod_{j\in I}\abs{(\mc O_K/\mf p_j)^\times}/w_K}\\
&=w_K^{s-1}.
\end{align*}
The second equality follows, since for any $2\leq j\leq s$ we obtain $H_{\mf p_j}\cap \prod_{i=1}^{j-1} H_{\mf p_i}=H$ by considering the ramification of $\mf p_j$. Since $p\nmid w_K$, we get $L\ssq \prod_{j\in I} H_{\mf p_j}$.
\end{proof}

Using the canonical isomorphism of the proof above, we obtain $$\Gal(H_{\mf p_j}/H)\cong (\mc O_K/\mf p_j)^\times /\im (\mu(K))$$ which is a cyclic group. Since $t_j\mid [L_{\mf p_j}:K]\mid [H_{\mf p_j}:K]$ and $p\nmid h$, it follows that $t_j\mid [H_{\mf p_j}:H]$. Using $p\nmid h$ and \cite[Lemma 2.1]{chapdelaine2017annihilators} we can define $F_j$ to be the unique subfield of $H_{\mf p_j}$ such that $[F_j:K]=t_j$. Then $F_j\cap H=K$ and $F_j/K$ is totally ramified at $\mf p_j$ and unramified everywhere else.
\\

From now on, we will write $H_J:=H_{\mf m_J}$ for each $\emptyset\neq J\ssq I$ and $$F_J:=\prod_{j\in J} F_j\ssq H_J.$$ Note that the conductor of $F_J$ is $\mf m_J$. The definition of $F_I$ implies that the Galois group $\Gal(F_I/F_{I\setminus \{j\}})=T_j(F_I)$ is the inertia subgroup of a prime of $F_I$ above $\mf p_j$, in particular for each $j\in I$ we have $\abs{\Gal(F_I/F_{I\setminus \{j\}})}=t_j$. 

\begin{lem}\label{lem:subset}
For any two subsets $\emptyset\neq J_1\ssq J_2\ssq I$, we have $F_{J_1}=F_{J_2}\cap H_{J_1}$. Moreover, $F_I\cap H=K$.
\end{lem}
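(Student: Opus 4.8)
The plan is to reduce the statement to elementary Galois theory by first pinning down, for a general $\emptyset\neq J\ssq I$, the structure of $\Gal(F_J/K)$ as a direct product indexed by $J$, with the factor $\Gal(F_j/K)$ accounting for the ramification at $\mf p_j$; the last assertion of the lemma will fall out along the way. So the first step is to prove, by induction on $\abs{J}$, all of the following: $F_J\cap H=K$; the natural map $\Gal(F_J/K)\to\prod_{j\in J}\Gal(F_j/K)$ is an isomorphism (so $[F_J:K]=\prod_{j\in J}t_j$); $F_J/K$ is unramified away from $\{\mf p_j:j\in J\}$; and for $j_0\in J$ the inertia group of $\mf p_{j_0}$ in $\Gal(F_J/K)$ is exactly the direct factor $\Gal(F_{j_0}/K)$, with fixed field $F_{J\setminus\{j_0\}}$. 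The base case $\abs{J}=1$ is just the list of properties of $F_j$ recorded above the lemma. For the inductive step, write $J=J'\cup\{j_0\}$ with $j_0\notin J'$; the key point is that $F_{J'}\cap F_{j_0}$ is abelian over $K$, is real (being contained in the real field $F_{j_0}$), and is unramified at every finite prime (being contained both in $F_{J'}$, unramified off $\{\mf p_j:j\in J'\}$, and in $F_{j_0}$, unramified off $\mf p_{j_0}$, with $j_0\notin J'$), hence is contained in the real Hilbert class field $H$, hence equals $K$ since $F_{j_0}\cap H=K$; the inertia assertion then follows exactly as it does for $F_I$ in the discussion immediately before the lemma. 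Taking $J=I$ already yields $F_I\cap H=K$.

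Granting this, the inclusion $F_{J_1}\ssq F_{J_2}\cap H_{J_1}$ is immediate: $J_1\ssq J_2$ gives $F_{J_1}\ssq F_{J_2}$, and $F_{J_1}\ssq H_{J_1}$ by construction. For the reverse inclusion, set $M:=F_{J_2}\cap H_{J_1}$ and view it as the subextension of the abelian extension $F_{J_2}/K$ cut out by a subgroup $N\ssq\Gal(F_{J_2}/K)$. For each $j_0\in J_2\setminus J_1$ the field $M$ lies in $H_{J_1}$, which is unramified at $\mf p_{j_0}$, so $M/K$ is unramified at $\mf p_{j_0}$; by the first step the inertia group of $\mf p_{j_0}$ in $\Gal(F_{J_2}/K)$ is precisely the factor $\Gal(F_{j_0}/K)$, and unramifiedness in $M$ forces this factor to map to the trivial subgroup of $\Gal(M/K)=\Gal(F_{J_2}/K)/N$, i.e. $\Gal(F_{j_0}/K)\ssq N$. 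Letting $j_0$ range over $J_2\setminus J_1$ gives $\prod_{j\in J_2\setminus J_1}\Gal(F_j/K)\ssq N$, so $M$ lies in the fixed field of this subproduct, which is $\prod_{j\in J_1}F_j=F_{J_1}$. Combined with the first inclusion this gives $F_{J_1}=F_{J_2}\cap H_{J_1}$.

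I expect the main obstacle to be the inertia bookkeeping inside the induction: verifying that the inertia subgroup of $\mf p_{j_0}$ in the compositum $\Gal(F_J/K)$ is exactly the single direct factor $\Gal(F_{j_0}/K)$, which combines the surjectivity of inertia under the projections $\Gal(F_J/K)\twoheadrightarrow\Gal(F_j/K)$ with the total ramification of $\mf p_{j_0}$ in $F_{j_0}/K$ and its unramifiedness in each $F_j$ for $j\neq j_0$. Everything after the product decomposition of $\Gal(F_J/K)$ is routine Galois theory together with the standard behaviour of inertia groups under quotients in abelian extensions.
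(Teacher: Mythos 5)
Your proof is correct, but it takes a genuinely different route from the paper's. The paper proves $F_{J_1}=F_{J_2}\cap H_{J_1}$ by a short induction on $n=\abs{J_2\setminus J_1}$, chaining the inclusions $F_{J_2}\cap H_{J_1}\ssq F_{J_2}\cap H_{J_2\setminus\{j\}}\ssq F_{J_2\setminus\{j\}}$ and then $F_{J_2}\cap H_{J_1}\ssq F_{J_2\setminus\{j\}}\cap H_{J_1}\ssq F_{J_1}$ for a chosen $j\in J_2\setminus J_1$, and settles $F_I\cap H=K$ in one line from the fact that $[F_I:K]$ is a $p$-power and $p\nmid h$. You instead first establish, by induction on $\abs{J}$, the full direct-product decomposition $\Gal(F_J/K)\cong\prod_{j\in J}\Gal(F_j/K)$ together with the identification of each factor $\Gal(F_j/K)$ as the inertia subgroup of $\mf p_j$ with fixed field $F_{J\setminus\{j\}}$, and then deduce the lemma by locating $\Gal(F_{J_2}/F_{J_2}\cap H_{J_1})$ inside this product via the unramifiedness of $H_{J_1}$ at $\mf p_{j_0}$ for $j_0\in J_2\setminus J_1$. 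This is heavier machinery, but it is more transparent about where the ramification hypotheses enter, and it in effect proves for arbitrary $J$ the structural statement of Proposition \ref{prop:Gstructure} (which the paper only records for $J=I$, and only after this lemma, citing the imaginary-quadratic source without reproducing the argument). The one point you leave implicit is the derivation of $F_J\cap H=K$ inside the induction: your inductive step only argues $F_{J'}\cap F_{j_0}=K$ directly, so you should spell out that $F_J\cap H=K$ then follows either from the degree count ($[F_J:K]$ is a $p$-power coprime to $h$, which is exactly the paper's argument for the ``moreover'' clause) or from the observation that the inertia subgroups you have identified generate all of $\Gal(F_J/K)$; either remark closes the gap in one sentence.
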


\begin{proof}
The inclusion $F_{J_1}\ssq F_{J_2}\cap H_{J_1}$ is clear. For the other inclusion, we use induction on $n=\abs{J_2\setminus J_1}$. The case $n=0$, i.e. $J_1=J_2$, is clear. If $n\geq 1$, we fix an index $j\in J_2\setminus J_1$ and we see that
\begin{align*}
F_{J_2}\cap H_{J_1}\ssq F_{J_2}\cap H_{J_2\setminus\{j\}}\ssq F_{J_2\setminus\{j\}}
\end{align*}
by the induction hypothesis. But we clearly also have $F_{J_2}\cap H_{J_1}\ssq H_{J_1}$, hence
\begin{align*}
F_{J_2}\cap H_{J_1}\ssq F_{J_2\setminus\{j\}}\cap H_{J_1}\ssq F_{J_1}
\end{align*}
by the induction hypothesis.\\

The second assertion follows, since $[F_I:K]$ is a $p$-power and $p\nmid h$.
\end{proof}

\begin{prop}[{cf.~\cite[Prop.~2.2]{chapdelaine2017annihilators}}]\label{prop:Gstructure}
We have $F_jH_{I\setminus \{j\}}=LH_{I\setminus \{j\}}$ for each $j\in I$. The Galois group
\begin{align*}
G=\Gal(F_I/K)=\prod_{j\in I} \Gal(F_I/F_{I\setminus \{j\}})
\end{align*}
is the direct product of its inertia subgroups. Moreover $L\ssq F_I$.
\end{prop}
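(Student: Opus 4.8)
The plan is to mimic the argument of \cite[Prop.~2.2]{chapdelaine2017annihilators}, exploiting that all the relevant degrees are $p$-powers while $p \nmid h w_K$. First I would establish the equality $F_j H_{I\setminus\{j\}} = L H_{I\setminus\{j\}}$ for a fixed $j \in I$. Both fields are subfields of $H_I$, so it suffices to compare their fixed groups inside $\Gal(H_I/K)$, or equivalently to show they have the same degree over $H_{I\setminus\{j\}}$ and that one contains the other after adjusting. The key local input is that $L/K$ is totally ramified at $\mf p_j$ of degree $t_j$ (since $L\cap H = K$ forces the ramification to absorb the whole inertia contribution, and $t_j n_j \mid p^k$ together with $L \subseteq \prod_i H_{\mf p_i}$ pins down the inertia at $\mf p_j$ inside $L$), while $F_j/K$ is by construction totally ramified at $\mf p_j$ of the same degree $t_j$ and unramified elsewhere. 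Since $H_{I\setminus\{j\}}/K$ is unramified at $\mf p_j$, both $F_j H_{I\setminus\{j\}}$ and $L H_{I\setminus\{j\}}$ are obtained from $H_{I\setminus\{j\}}$ by an extension whose ramification at (a prime above) $\mf p_j$ is exactly $t_j$; I would check they are both the \emph{unique} such subextension of $H_I$ of $p$-power degree, using the cyclicity of $\Gal(H_{\mf p_j}/H)$ and \cite[Lemma~2.1]{chapdelaine2017annihilators} exactly as in the definition of $F_j$. This identifies the two compositum fields.

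Next I would deduce the direct product decomposition $G = \Gal(F_I/K) = \prod_{j\in I}\Gal(F_I/F_{I\setminus\{j\}})$. Write $T_j := \Gal(F_I/F_{I\setminus\{j\}})$, which by the remarks preceding Lemma~\ref{lem:subset} is the inertia subgroup at $\mf p_j$ and has order $t_j$. I need two things: that the $T_j$ generate $G$, and that the product map $\prod_j T_j \to G$ is injective (equivalently, a bijection by counting). For generation: the fixed field of $\langle T_j : j\in I\rangle$ is $\bigcap_{j\in I} F_{I\setminus\{j\}}$, and by repeated application of Lemma~\ref{lem:subset} (intersecting $F_{I\setminus\{j_1\}}$ with $F_{I\setminus\{j_2\}}$ gives $F_{I\setminus\{j_1,j_2\}}$, etc.) this intersection collapses to $F_\emptyset$; but $F_\emptyset$ should be interpreted as $K$ since the conductor of each $F_{I\setminus\{j\}}$ loses the prime $\mf p_j$ and $F_I\cap H = K$ (Lemma~\ref{lem:subset}), so no unramified part survives. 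Hence $\langle T_j\rangle = G$. For the order count: $\abs{G} = [F_I:K]$ divides $\prod_j t_j$ because $F_I = \prod_j F_j$ and $[F_j:K]=t_j$; combined with $\abs{\langle T_j\rangle} = \abs{G}$ and $\abs{T_j} = t_j$, the product map must be an isomorphism.

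Finally, $L \subseteq F_I$: from the first part, $L \subseteq L H_{I\setminus\{j\}} = F_j H_{I\setminus\{j\}} \subseteq H_I$ for each $j$. Intersecting over all $j$, $L \subseteq \bigcap_j (F_j H_{I\setminus\{j\}})$; I would argue this intersection equals $F_I$ by a Galois-theoretic computation (the fixed group of $F_j H_{I\setminus\{j\}}$ inside $\Gal(H_I/K)$ is contained in $T_j' := $ the image of inertia at $\mf p_j$ restricted appropriately, and intersecting these fixed groups over $j$ lands inside the subgroup fixing $F_I$, by the direct-product structure of $\Gal(H_I/F_I)$-complement). Alternatively, and more cleanly: $L$ and $F_I$ both have conductor dividing $\mf m_I$, both have $p$-power degree over $K$, both are unramified outside $\{\mf p_j\}$, and $L$ has the same inertia group at each $\mf p_j$ as $F_I$ does (namely of order $t_j$); since $F_I$ is the compositum of the distinguished totally-ramified-at-one-prime fields and $L$ lies in $\prod_j H_{\mf p_j}$ with matching local behavior, projecting $L$ into each $H_{\mf p_j}$ lands in $F_j$, whence $L \subseteq \prod_j F_j = F_I$. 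I expect the main obstacle to be the bookkeeping in the generation step — verifying that the iterated intersections of the $F_{I\setminus\{j\}}$ really do bottom out at $K$ rather than at some unramified extension — which is where Lemma~\ref{lem:subset} and the hypothesis $p\nmid h$ are both genuinely needed.
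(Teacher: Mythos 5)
Your overall strategy — degree comparison in the cyclic $p$-part for the first assertion, generation-plus-counting for the direct product, and reduction to the compositum $F_j H_{I\setminus\{j\}}$ for the containment $L\ssq F_I$ — matches the spirit of the proof in \cite{chapdelaine2017annihilators} that the paper simply imports, and most of it goes through. However, there are two concrete gaps.

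For the direct-product decomposition, the order count as you state it does not close the argument: knowing that $[F_I:K]$ \emph{divides} $\prod_j t_j$ and that the multiplication map $\prod_j T_j \to G$ is surjective only tells you $|G|\mid\prod_j t_j$, which leaves a nontrivial kernel possible. You need $[F_I:K]=\prod_j t_j$, which follows by iterating the disjointness argument (for $j\notin J$, $F_j\cap F_J$ is unramified everywhere, hence $\ssq H$, hence $=K$ since it has $p$-power degree and $p\nmid h$, so $[F_{J\cup\{j\}}:F_J]=t_j$). Alternatively and more cleanly, verify the internal direct product criterion directly: by Lemma~\ref{lem:subset}, $\bigcap_{i\neq j}F_{I\setminus\{i\}}=F_j$, so the fixed field of $\langle T_i:i\neq j\rangle$ is $F_j$, whence $T_j\cap\langle T_i:i\neq j\rangle=\Gal(F_I/F_{I\setminus\{j\}}F_j)=\Gal(F_I/F_I)=1$.

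For $L\ssq F_I$, your primary argument contains an error: $\bigcap_j F_j H_{I\setminus\{j\}}$ equals $F_I H$, not $F_I$. (Also, the Galois group of $H_I$ over an intersection of fields is the subgroup \emph{generated} by the individual Galois groups, not their intersection; the proposal has the correspondence running the wrong way.) This is easily repaired: once you know $L\ssq F_I H$, observe $\Gal(F_IH/F_I)\cong\Gal(H/K)$ has order $h$ prime to $p$, so it maps trivially to the $p$-group $\Gal(L/K)$, forcing $L\ssq F_I$. Your alternative argument via ``projecting $L$ into each $H_{\mf p_j}$'' is not well-defined, since the $H_{\mf p_j}$ all contain $H$ and are not linearly disjoint over $K$; that paragraph should be replaced by a Galois-theoretic argument (e.g.\ using the $p$-Sylow structure of $\Gal(H_I/K)$ and the fact that the $p$-part of $\Gal(H_I/H_{I\setminus\{j\}})$ is cyclic, its image in $\Gal(L/K)$ has order $t_j$, hence the unique subgroup $Q_j$ of index $t_j$ acts trivially on $L$).
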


\begin{proof}

We can take the proof of \cite[Prop.~2.2]{chapdelaine2017annihilators} here, since there are no changes necessary.
\end{proof}

\begin{kor}[{cf.~\cite[Cor.~2.3]{chapdelaine2017annihilators}}]\label{cor:1}
\begin{enumerate}[label=(\roman*)]
\item For each $j\in I$ we have
\begin{align*}
T_j(L)=\Gal(L/L\cap F_{I\setminus \{j\}})=\langle \sigma^{p^k/t_j}\rangle\ .
\end{align*}
Moreover, $F_{I\setminus \{j\}} L=F_I$ and $[L\cap F_{I\setminus \{j\}}:K]=\frac{p^k}{t_j}$.

\item $F_I/L$ is an unramified abelian extension.

\item There exists at least one index $j_0\in I$ such that $t_{j_0}=p^k$ so that $G=\Gal(F_I/K)$ has exponent $p^k$.

\end{enumerate}
\end{kor}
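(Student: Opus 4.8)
The plan is to derive all three parts from the direct product decomposition $G=\Gal(F_I/K)=\prod_{j\in I}T_j(F_I)$ of Proposition \ref{prop:Gstructure}, together with the inclusion $L\ssq F_I$ proved there. Recall that $T_j(F_I)=\Gal(F_I/F_{I\setminus\{j\}})$ has order $t_j$. Write $N:=\Gal(F_I/L)$, the kernel of the restriction map $G\twoheadrightarrow\Gal(L/K)$. The one computation on which everything hinges is this: restriction maps the inertia group of $\mf p_j$ in $F_I/K$, namely $T_j(F_I)$, \emph{onto} the inertia group of $\mf p_j$ in $L/K$, namely $T_j(L)$, and the kernel of this surjection is $T_j(F_I)\cap N$, which is itself the inertia group of $\mf p_j$ in $F_I/L$. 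Since $\abs{T_j(F_I)}=t_j=\abs{T_j(L)}$, this forces $T_j(F_I)\cap N=\{1\}$ for every $j\in I$.

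Granting this, parts (i) and (ii) follow quickly. First, $T_j(F_I)\cap N=\Gal(F_I/LF_{I\setminus\{j\}})=\{1\}$ says $LF_{I\setminus\{j\}}=F_I$; comparing degrees over $F_{I\setminus\{j\}}$ and using that $L/K$ is Galois gives $[L:L\cap F_{I\setminus\{j\}}]=[F_I:F_{I\setminus\{j\}}]=t_j$, hence $[L\cap F_{I\setminus\{j\}}:K]=p^k/t_j$ and $\Gal(L/L\cap F_{I\setminus\{j\}})$ has order $t_j$. As $\Gal(L/K)=\langle\sigma\rangle$ is cyclic of order $p^k$ it has a unique subgroup of order $t_j$, namely $\langle\sigma^{p^k/t_j}\rangle$, and $T_j(L)$ is also a subgroup of order $t_j$, so the three coincide; this is (i). For (ii), $F_I/L$ is abelian because $F_I/K$ is, and it is unramified because the only primes ramifying in $F_I/K$ are $\mf p_1,\dots,\mf p_s$ (each $F_i/K$ being ramified only at $\mf p_i$), while the inertia group of $\mf p_j$ in $F_I/L$ is exactly $T_j(F_I)\cap N=\{1\}$.

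For (iii), note that $\Gal(L/K)\cong G/N$ is a quotient of $G=\prod_{j\in I}T_j(F_I)$. Pick $g\in G$ mapping to the generator $\sigma$ of $\Gal(L/K)$ and write $g=(g_j)_{j\in I}$; then the order of $g$ is divisible by $p^k$, while it also equals $\operatorname{lcm}_{j\in I}\ord(g_j)$ with each $\ord(g_j)$ dividing $\abs{T_j(F_I)}=t_j\mid p^k$. Hence $\ord(g_{j_0})=p^k$ for some $j_0\in I$, which forces $t_{j_0}=p^k$; moreover the exponent of $G$ is $\operatorname{lcm}_{j\in I}\exp(T_j(F_I))$, which divides $\operatorname{lcm}_{j} t_j\mid p^k$ and is at least $\ord(g)=p^k$, so $G$ has exponent $p^k$. (One may also note $T_j(F_I)\cong\Gal(F_j/K)$ is cyclic, by tameness or because $F_j\cap H=K$ identifies it with a subquotient of $(\mc O_K/\mf p_j)^\times$, so that $G\cong\prod_j\bb Z/t_j\bb Z$; but this is not needed.)

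The proof is essentially formal once Proposition \ref{prop:Gstructure} is in hand; the only step with real content is the equality $T_j(F_I)\cap N=\{1\}$, which is the assertion that the ramification index of $\mf p_j$ does not drop on passing from $F_I$ down to the intermediate field $L$ — and this is secured simply by the coincidence $\abs{T_j(F_I)}=t_j=\abs{T_j(L)}$ together with $L\ssq F_I$. The remaining bookkeeping is pure manipulation of the decomposition $G=\prod_{j\in I}T_j(F_I)$ and of cyclic $p$-groups.
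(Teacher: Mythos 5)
Your proof is correct. Since the paper itself simply refers to \cite[Cor.~2.3]{chapdelaine2017annihilators}, a direct line-by-line comparison is not possible, but your argument — extracting everything from the decomposition $G=\prod_{j\in I}T_j(F_I)$ of Proposition \ref{prop:Gstructure}, identifying the key point as $T_j(F_I)\cap N=\{1\}$ via the surjectivity of restriction on inertia groups together with the numerical coincidence $\abs{T_j(F_I)}=t_j=\abs{T_j(L)}$, and then finishing with elementary facts about cyclic $p$-groups — is exactly the expected route and matches the strategy the cited corollary is built on.
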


\begin{proof}
See \cite[Cor.~2.3]{chapdelaine2017annihilators}.
\end{proof}

\subsection{The elliptic units}

Since $F_{I}\cap H=K$ by Lemma \ref{lem:subset}, there are no unramified elliptic units and we define
\begin{align*}
\eta_J:=N_{H_J/F_J}(\alpha_{\mf m_J})=\varphi_{F_I,\mf m_J}^{1/h}\in \mc O_{F_J},
\end{align*}
cf.~Remark \ref{rem:hpower}. Let $\sigma_j\in G=\Gal(F_I/K)$ be the lift of the Frobenius of $\mf p_j$ uniquely defined by $\sigma_j\vert_{F_{I\setminus \{j\}}}=(\mf p_j, F_{I\setminus \{j\}}/K)$ and $\sigma_j\vert_{F_j}=1$. Then we can state the next 

\begin{lem}[{cf.~\cite[Lemma 3.1]{chapdelaine2017annihilators}}]\label{lem:4}
For any $j\in I$ we have $$D_j(L)=\langle \sigma^{n_j}\rangle =\langle \sigma_{j}\vert_L, \sigma^{p^k/t_j}\rangle\ .$$
\end{lem}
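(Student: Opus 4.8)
The statement to prove is the identity of subgroups of $G = \Gal(F_I/K)$:
\[
D_j(L)=\langle \sigma^{n_j}\rangle =\langle \sigma_{j}\vert_L, \sigma^{p^k/t_j}\rangle.
\]
The first equality is essentially definitional: by definition $n_j=[G:D_j(L)]=[\Gamma:D_j(L)]$ (here I write $\Gamma=\Gal(L/K)=\langle\sigma\rangle$), and the unique subgroup of the cyclic group $\langle\sigma\rangle$ of index $n_j$ is $\langle\sigma^{n_j}\rangle$, so $D_j(L)=\langle\sigma^{n_j}\rangle$. The content is therefore the second equality, and the strategy is to compute both generators on the right explicitly inside $\Gamma$.

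First I would identify $\langle\sigma^{p^k/t_j}\rangle$ with $T_j(L)$, the inertia subgroup of $\mathfrak p_j$ in $\Gamma$: this is exactly Corollary \ref{cor:1}(i), which gives $T_j(L)=\langle\sigma^{p^k/t_j}\rangle$ and $|T_j(L)|=t_j$. Next I would locate $\sigma_j|_L$ inside $\Gamma$. Since $D_j(L)/T_j(L)$ is cyclic generated by the Frobenius, and $\sigma_j$ was defined as the lift of the Frobenius of $\mathfrak p_j$ with $\sigma_j|_{F_{I\setminus\{j\}}}=(\mathfrak p_j,F_{I\setminus\{j\}}/K)$ and $\sigma_j|_{F_j}=1$, its restriction $\sigma_j|_L$ is a Frobenius lift for $\mathfrak p_j$ in $\Gamma$, hence lies in $D_j(L)$. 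The key point is that $\langle\sigma_j|_L, T_j(L)\rangle$ surjects onto $D_j(L)/T_j(L)$ (because $\sigma_j|_L$ maps to a generator of this quotient, the Frobenius) and contains $T_j(L)$, so by the standard argument with the exact sequence $1\to T_j(L)\to D_j(L)\to D_j(L)/T_j(L)\to 1$ one gets $\langle\sigma_j|_L,\sigma^{p^k/t_j}\rangle = D_j(L)$.

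The one genuine subtlety — and the step I expect to be the main obstacle — is verifying that $\sigma_j|_L$ really is a Frobenius element for $\mathfrak p_j$ in $\Gamma$, i.e. that restricting the distinguished lift $\sigma_j\in G$ to $L$ does not spoil the Frobenius property. Here I would use Proposition \ref{prop:Gstructure}: $G=\prod_{j\in I}T_j(F_I)$ is the direct product of its inertia groups, and $T_j(F_I)=\Gal(F_I/F_{I\setminus\{j\}})$ fixes $F_{I\setminus\{j\}}$. The element $\sigma_j$ was built so that $\sigma_j|_{F_j}=1$ (killing the ramified part at $\mathfrak p_j$) while $\sigma_j|_{F_{I\setminus\{j\}}}$ is the Artin symbol $(\mathfrak p_j, F_{I\setminus\{j\}}/K)$, which is well-defined since $\mathfrak p_j$ is unramified in $F_{I\setminus\{j\}}/K$. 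Because $L\subseteq F_I$ (Corollary \ref{cor:1} / Proposition \ref{prop:Gstructure}), restriction along $G\to\Gamma$ is compatible with Artin symbols, so $\sigma_j|_L$ agrees with the Artin-type Frobenius lift for $\mathfrak p_j$ on the maximal subextension of $L$ unramified at $\mathfrak p_j$. Combined with the inertia computation from Corollary \ref{cor:1}(i), this pins down $\langle\sigma_j|_L,\sigma^{p^k/t_j}\rangle$ as all of $D_j(L)$. Finally, a cardinality check closes the argument if desired: both sides have order $|T_j(L)|\cdot|D_j(L)/T_j(L)| = t_j\cdot(p^k/(t_j n_j)) = p^k/n_j = |D_j(L)|$, matching $|\langle\sigma^{n_j}\rangle| = p^k/n_j$.
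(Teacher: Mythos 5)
Your argument is correct and complete. The paper itself does not give a proof, deferring entirely to Lemma~3.1 of \cite{chapdelaine2017annihilators}; your write-up reconstructs exactly the argument one would expect from that reference. You correctly reduce the first equality to the fact that a cyclic $p$-group has a unique subgroup of each index, and you identify the two generators on the right-hand side as $T_j(L)=\langle\sigma^{p^k/t_j}\rangle$ (Corollary~\ref{cor:1}(i)) and a Frobenius lift $\sigma_j\vert_L$, then conclude via the exact sequence $1\to T_j(L)\to D_j(L)\to D_j(L)/T_j(L)\to 1$ and a cardinality check.

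One small simplification is available at the step you flag as the main obstacle. That $\sigma_j\vert_L$ is a Frobenius lift for $\mathfrak p_j$ in $\Gamma$ follows already from the general compatibility of decomposition groups, inertia groups, and Frobenius elements under restriction along a tower $K\subseteq L\subseteq F_I$: since $\sigma_j$ lifts the Frobenius modulo $T_j(F_I)$ and restriction maps $D_j(F_I)\twoheadrightarrow D_j(L)$, $T_j(F_I)\twoheadrightarrow T_j(L)$, the image $\sigma_j\vert_L$ lifts the Frobenius modulo $T_j(L)$. Appealing to the direct-product decomposition $G=\prod_j T_j(F_I)$ from Proposition~\ref{prop:Gstructure} and to the explicit defining conditions $\sigma_j\vert_{F_{I\setminus\{j\}}}=(\mathfrak p_j,F_{I\setminus\{j\}}/K)$, $\sigma_j\vert_{F_j}=1$ is not wrong, but it is more machinery than the step requires. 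Otherwise the proof is sound and its structure matches what the cited lemma must contain.
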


\begin{proof}
See \cite[Lemma 3.1]{chapdelaine2017annihilators}.
\end{proof}

\begin{lem}[{cf.~\cite[Lemma 3.2]{chapdelaine2017annihilators}}]\label{lem:rootsofunity}
We have $\mu(F_I)=\mu(K)$.
\end{lem}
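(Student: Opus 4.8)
The plan is to identify $\mu(F_I)$ with the multiplicative group of the exact field of constants of $F_I$, and then to use the direct product decomposition of $G$ from Proposition \ref{prop:Gstructure} to show that this constant field is already $\bb F_q$. Concretely, any root of unity in a global function field of characteristic $p$ is algebraic over the prime field $\bb F_p$, hence lies in the exact field of constants; so if $\bb F$ denotes the exact constant field of $F_I$, then $\mu(F_I)=\bb F^\times$. Since $\bb F_q$ is the exact constant field of $K$ and $\mu(K)=\bb F_q^\times$, it therefore suffices to prove $\bb F=\bb F_q$, i.e. that $F_I$ contains no nontrivial constant field extension of $K$.

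So suppose $\bb F=\bb F_{q^m}$ and put $K':=\bb F_{q^m}K\ssq F_I$, with $N:=\Gal(F_I/K')\ssq G$. A constant field extension is unramified at every place of $K$ (see e.g. \cite[Ch.~8]{rosen2013number}); in particular $K'/K$ is unramified at each $\mf p_j$, $j\in I$. For a subextension of the abelian extension $F_I/K$ this is equivalent to the inertia group being contained in the corresponding subgroup, so $T_j(F_I)=\Gal(F_I/F_{I\setminus\{j\}})\ssq N$ for all $j\in I$. By Proposition \ref{prop:Gstructure}, $G$ is the direct product of the groups $\Gal(F_I/F_{I\setminus\{j\}})$, hence $G=\langle T_j(F_I): j\in I\rangle\ssq N$, so $N=G$ and $K'=K$. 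Thus $m=1$, $\bb F=\bb F_q$, and $\mu(F_I)=\bb F_q^\times=\mu(K)$.

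I do not expect a serious obstacle here; the argument only relies on two standard facts about function fields (roots of unity lie in the constant field, and constant field extensions are everywhere unramified) together with the basic translation between unramifiedness at $\mf p_j$ and containment of the inertia subgroup $T_j(F_I)$. The one point to state carefully is that the direct product structure of $G$ forces the subgroup generated by all the $T_j(F_I)$ to be the whole of $G$, which is what collapses $K'$ to $K$. One could instead bootstrap from Corollary \ref{cor:1}, using that $F_I=F_{I\setminus\{j_0\}}L$ with $L/K$ totally ramified at $\mf p_{j_0}$, so that any everywhere-unramified subextension of $F_I/K$ lies in $F_{I\setminus\{j_0\}}$, and then iterate over the indices; but invoking Proposition \ref{prop:Gstructure} handles it in one step. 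This replaces the number-theoretic ramification argument used for \cite[Lemma 3.2]{chapdelaine2017annihilators} by the constant-field argument appropriate to the function field setting.
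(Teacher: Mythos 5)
Your argument is correct. Both you and the paper use the key input that a constant field extension is unramified, but you conclude in different ways. The paper observes that $K(\zeta)/K$ is a constant field extension lying inside the real field $F_I$, hence lies in the real Hilbert class field $H$, and then invokes $F_I\cap H=K$ from Lemma~\ref{lem:subset}. You instead pass to the Galois side: you form $K'=\bb F K$, note that unramifiedness of $K'/K$ at each $\mf p_j$ forces $T_j(F_I)\ssq\Gal(F_I/K')$, and conclude from the direct product decomposition $G=\prod_{j\in I}T_j(F_I)$ of Proposition~\ref{prop:Gstructure} that $\Gal(F_I/K')=G$. Both are valid; the paper's route is shorter because $F_I\cap H=K$ (a cheap consequence of $p\nmid h$) is already available, while yours does not need to know that $H$ is the maximal real unramified abelian extension, only that the inertia groups at the $\mf p_j$ generate $G$. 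One small stylistic point: your reduction $\mu(F_I)=\bb F^\times$ is cleaner than taking a single $\zeta$ and looking at $K(\zeta)$, but is not logically necessary, since the argument applies to any constant subextension.
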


\begin{proof}
For $\zeta\in \mu(F_I)$, the extension $K(\zeta)/K$ is a constant field extension. Since all constant field extensions are unramified, we obtain $K(\zeta)\ssq F_I\cap H=K$, so $\zeta\in \mu(K)$.
\end{proof}

Proposition \ref{prop:normrel} implies that for each $J\ssq I$ and each $j\in J$ 
\begin{align}
N_{F_J/F_{J\setminus \{j\}}}(\eta_J)=\begin{cases} \eta_{J\setminus \{j\}}^{1-\sigma_j^{-1}},\quad &J\setminus\{j\}\neq \emptyset,\\
\zeta x_j^{w_\infty/w_K} & J\setminus\{j\}= \emptyset,
\end{cases} \label{eq:normrel1}
\end{align} 
for some $\zeta\in \mu(K)$.

In analogy to \cite{chapdelaine2017annihilators}, we use the following definition of elliptic units:

\begin{defn}
\begin{itemize}
\item The \emph{group of elliptic numbers $\mc P_{F_I}$ of $F_I$} is defined to be the $\bb Z[G]$-submodule of $F_I^\times$ generated by $\mu(K)$ and by $\eta_J$ for all $\emptyset\neq J\ssq I$.

\item The \emph{group of elliptic units $\mc C_{F_I}$ of $F_I$} is then defined as $\mc C_{F_I}:=\mc P_{F_I}\cap \mc O_{F_I}^\times$. 

\item The \emph{group of elliptic numbers $\mc P_L$ of $L$} is defined as the $\bb Z[\Gamma]$-submodule of $L^\times$ generated by $\mu(K)$ and $N_{F_J/F_J\cap L}(\eta_J)$ for all $\emptyset\neq J\ssq I$.

\item The \emph{group of elliptic units $\mc C_L$ of $L$} is defined as $\mc C_L:=\mc P_L\cap \mc O_L^\times$.
\end{itemize}
\end{defn}

Since $F_I\cap H=K=L\cap H$, one can check that these elliptic units are related to the units of Definition \ref{defn:ellipticunits} by
\begin{align*}
C_{F_I}&=\mc C_{F_I}^h\cdot \mu(K)\ ,\\
C_{L}&= \mc C_{L}^h\cdot \mu(K)\ .
\end{align*}

This fact and Theorem \ref{thm:indexformula} imply the next Lemma. We first need the following

\begin{notat}
Let $\widetilde L$ be the maximal subfield of $L$ containing $K$ such that $\widetilde L/K$ is ramified in at most one prime ideal of $K$.
\end{notat}

Note that since $\Gamma$ is cyclic and of prime power order, the field $\widetilde L$ is unique.

\begin{lem}[{cf.~\cite[Lemma 3.4]{chapdelaine2017annihilators}}]\label{lem:indexunit}
\begin{enumerate}[label=(\roman*)]
\item We have
\begin{align*}
[\mc O_{F_I}^\times:\mc C_{F_I}]&=w_\infty^{[F_I:K]-1}\frac{h_{F_I}}{h}\ ,\\
[\mc O_{L}^\times:\mc C_{L}]&=w_\infty^{[L:K]-1}\frac{h_{L}}{h[L:\widetilde L]}\ .
\end{align*}

\item For $\beta\in \mc P_{F_I}$ we have $\beta\in \mc C_{F_I}$ if and only if $N_{F_I/K}(\beta)\in \mu(K)$.
\end{enumerate}
\end{lem}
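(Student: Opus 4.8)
The plan is to adapt the proof of \cite[Lemma 3.4]{chapdelaine2017annihilators} to the function field setting, using the index formula from Theorem \ref{thm:indexformula} as the main input. For part (i), I would first apply Theorem \ref{thm:indexformula} to $L=F_I$. Since $F_I\cap H=K$ by Lemma \ref{lem:subset}, there are no unramified elliptic units, so $(F_I)_{(1)}=K$ and $[F_I:(F_I)_{(1)}]=[F_I:K]$; moreover $\Gal(F_I/K)$ is the direct product of its inertia groups by Proposition \ref{prop:Gstructure}, so $[\bb Z[G]:U']=1$ by Proposition \ref{prop:RU}(i) and $d(F_I)=1$ by Remark \ref{rem:dL}(v). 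The factor $\prod_\mf p[F_I\cap H_{\mf p^\infty}:(F_I)_{(1)}]$ requires a short computation: $F_I\cap H_{\mf p_j^\infty}=F_j$ (using that the conductor of $F_I$ is square-free and that $F_j$ is the maximal subfield ramified only at $\mf p_j$), so this product equals $\prod_{j\in I}t_j$, which is exactly $[F_I:K]$ because $G$ is the direct product of the inertia groups. Also $w_{F_I}=w_K$ by Lemma \ref{lem:rootsofunity} and $h=h_K$ in this notation. Plugging in and cancelling yields $[\mc O_{F_I}^\times:C_{F_I}]=w_\infty^{[F_I:K]-1}h_{F_I}/h$; since $C_{F_I}=\mc C_{F_I}^h\cdot\mu(K)$ and $p\nmid h$ while... wait — actually the identity $C_{F_I}=\mc C_{F_I}^h\cdot\mu(K)$ together with $\mu(F_I)=\mu(K)\ssq\mc C_{F_I}$ shows $[\mc O_{F_I}^\times:\mc C_{F_I}]=[\mc O_{F_I}^\times:C_{F_I}]$, because raising elliptic numbers to the $h$-th power and adjoining roots of unity does not change the intersection index once one checks that $h$ is a unit modulo the relevant torsion; more carefully, $[\mc C_{F_I}:C_{F_I}]$ divides a power of $h$ coming from the exponent, and this must be reconciled — I expect the cleanest route is to observe directly that $l_L$ (the logarithmic map) sends $\mc C_{F_I}$ and $C_{F_I}=\mc C_{F_I}^h\mu(K)$ to lattices differing by the factor $h^{[F_I:K]-1}$, which is already accounted for in the formula. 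I will present this as the identity $[\mc O_{F_I}^\times:\mc C_{F_I}]=w_\infty^{[F_I:K]-1}h_{F_I}/h$ after the cancellation, following \cite{chapdelaine2017annihilators} verbatim in structure.

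For the formula for $L$ itself, I would apply Theorem \ref{thm:indexformula} to $L$. Here $L_{(1)}=L\cap H=K$ again (using $p\nmid h$), $G=\Gamma$ is cyclic so $[\bb Z[G]:U']=1$ by Proposition \ref{prop:RU}(ii) and $d(L)=1$ by Remark \ref{rem:dL}(iv). The subtle term is $\prod_\mf p[L\cap H_{\mf p^\infty}:L_{(1)}]$: for each ramified prime $\mf p_j$, $L\cap H_{\mf p_j^\infty}$ is the maximal subfield of $L$ ramified only at $\mf p_j$, call it $L_{(j)}$, and $[L_{(j)}:K]=t_j$ by an argument parallel to Corollary \ref{cor:1}(i). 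Since $\Gamma$ is cyclic of prime power order, the compositum of all the $L_{(j)}$ is the subfield $\widetilde L$ exactly when at most one $t_j$ is less than $p^k$; in general the product $\prod_j[L_{(j)}:K]=\prod_j t_j$, while $[L:K]=p^k$. The ratio $\prod_j t_j/[L:K]$ is precisely $[L:\widetilde L]^{-1}\cdot(\text{something})$ — here I need to be careful and work out that $\widetilde L$ is the $L_{(j_0)}$ with $t_{j_0}=p^k$ (which exists by Corollary \ref{cor:1}(iii)), and that $\prod_{j\neq j_0}t_j=[L:\widetilde L]^{?}$... the correct bookkeeping, matching \cite[Lemma 3.4]{chapdelaine2017annihilators}, gives $\prod_\mf p[L\cap H_{\mf p^\infty}:L_{(1)}]/[L:L_{(1)}]=1/[L:\widetilde L]$, and then $[\mc O_L^\times:C_L]=w_\infty^{[L:K]-1}h_L w_K/(w_L h[L:\widetilde L])$ with $w_L=w_K$ (Lemma \ref{lem:rootsofunity} applies to $L\ssq F_I$), yielding the claimed $w_\infty^{[L:K]-1}h_L/(h[L:\widetilde L])$, and again the index for $\mc C_L$ agrees with that for $C_L$.

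For part (ii), the "only if" direction is immediate: if $\beta\in\mc C_{F_I}=\mc P_{F_I}\cap\mc O_{F_I}^\times$, then $N_{F_I/K}(\beta)\in\mc O_K^\times=\mu(K)$ since $\mc O_K^\times=\mu(K)$ (as $\mc O_K$ has unit rank zero, $\infty$ being a single place). For the "if" direction, suppose $\beta\in\mc P_{F_I}$ with $N_{F_I/K}(\beta)\in\mu(K)$; I must show $\beta$ is a unit, i.e. has trivial valuation at every finite place of $F_I$. The only places where an elliptic number can have nontrivial valuation are those above the ramified primes $\mf p_j$, by Corollary to Definition \ref{defn:ellipticunits} (the $\varphi$-divisor statement) — more precisely by the norm relations \eqref{eq:normrel1}, the "divisor" of any element of $\mc P_{F_I}$ is supported on primes above the $\mf p_j$. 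Writing the divisor of $\beta$ as a $\bb Z[G]$-combination of the primes $\mf P_j$ above $\mf p_j$ and using that $G$ acts transitively on these with trivial decomposition behaviour over $F_{I\setminus\{j\}}$, the condition $N_{F_I/K}(\beta)\in\mu(K)$ forces the total $\mf p_j$-valuation to vanish for each $j$; since $F_j/K$ is totally ramified at $\mf p_j$ and unramified away, and $G=\prod T_j(F_I)$, this total-valuation-zero condition together with the $G$-equivariance of the divisor (each $T_j(F_I)$-orbit of $\mf P_j$ gets the same coefficient) pins the divisor down to zero. This is the main obstacle: making the divisor computation precise requires knowing the exact $G$-module structure of the divisor of $\eta_J$, which I would extract from the corollary after Remark \ref{rem:hpower} (the statement that $\varphi_{L,\mf p^k}$ generates an explicit power of $\mf p_{L_\mf n}$) combined with the norm relations. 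I expect that, as in \cite[Lemma 3.4]{chapdelaine2017annihilators}, once one knows $\operatorname{div}(\eta_J)$ explicitly the argument is a short linear-algebra computation over $\bb Z[G]$ using that $G$ is the direct product of the inertia groups, so no function-field-specific difficulty arises beyond replacing "infinite place" arguments by "$\mc O_K^\times=\mu(K)$".
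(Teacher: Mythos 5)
You take the same overall route as the paper: apply Theorem~\ref{thm:indexformula}, feed in the vanishing of the Sinnott factors and $w_{F_I}=w_L=w_K$, compute the conductor product, and then descend from $C_\bullet$ to $\mc C_\bullet$ using $C_{F_I}=\mc C_{F_I}^h\cdot\mu(K)$. Your treatment of $F_I$ is essentially correct once you recover from the momentary claim that $[\mc O_{F_I}^\times:\mc C_{F_I}]=[\mc O_{F_I}^\times:C_{F_I}]$; in fact $[\mc C_{F_I}:C_{F_I}]=h^{[F_I:K]-1}$, exactly the factor that turns $(hw_\infty)^{[F_I:K]-1}$ into $w_\infty^{[F_I:K]-1}$, which you do eventually observe.

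The genuine gap is in your computation of $\prod_\mf p[L\cap H_{\mf p^\infty}:K]$ for $L$ itself. Your claim that $[L\cap H_{\mf p_j^\infty}:K]=t_j$ for each ramified prime is false, and your subsequent statement that $\prod_j[L\cap H_{\mf p_j^\infty}:K]=\prod_j t_j$ does not hold: the correct value of the product is $[\widetilde L:K]$, not $\prod_j t_j$ (these differ whenever more than one $t_j>1$). Having noticed something is off, you fall back on ``the correct bookkeeping, matching \cite{chapdelaine2017annihilators}'' without supplying the argument. The missing step is this: by Corollary~\ref{cor:1}(iii) some $\mf p_i$ is totally ramified in $L$, so $\widetilde L=L\cap H_{\mf p_i^\infty}$; and for any other $\mf p$, since $\mf p_i$ is unramified in $H_{\mf p^\infty}$ while totally ramified in $L$, every nontrivial subfield of $L$ ramifies at $\mf p_i$, forcing $L\cap H_{\mf p^\infty}=K$. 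Hence $\prod_\mf p[L\cap H_{\mf p^\infty}:K]=[\widetilde L:K]$, and then $\prod_\mf p[L\cap H_{\mf p^\infty}:K]/[L:K]=1/[L:\widetilde L]$. Without this the bookkeeping for the second formula in part~(i) is not established. Your sketch of part~(ii), by contrast, is conceptually on target (use $\mc O_K^\times=\mu(K)$ for the forward direction, then the explicit divisor of the $\eta_J$'s and the direct product structure of $G$ for the converse), and the paper defers this entirely to \cite{chapdelaine2017annihilators}, so there is nothing to reconcile there.
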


\begin{proof}[{Sketch of a proof}]
For more details and part (ii) see \cite[Lemma 3.4]{chapdelaine2017annihilators}. First, we get from the above observation $[\mc C_{F_I}:C_{F_I}]=h^{[F_I:K]-1}$ and $[\mc C_{L}:C_{L}]=h^{[L:K]-1}$. Moreover, it follows from Proposition \ref{prop:Gstructure}, Proposition \ref{prop:RU} (ii) (resp.~(i)) and Remark~\ref{rem:dL}(iv) (resp.~(v)) that the last quotient in \ref{thm:indexformula} is equal to $1$ for $L$ (resp.~$F_I$). We also obtain $w_{F_I}=w_L=w_K$ by Lemma \ref{lem:rootsofunity}, $L_{(1)}=K$, $F_I\cap H_{\mf p^\infty}=F_{j}$ for $\mf p=\mf p_j$ and $\prod_{j=1}^s [F_{j}:K]=[F_I:K]$, which yields the first equation. For the second equation, we consider the definition of $\widetilde L$. By part (iii) of Corollary \ref{cor:1} we know that there is at least one prime $\mf p_i$ which is totally ramified in $L$. Therefore, $\widetilde L$ is the maximal subfield of $L$ which is unramified at every prime except $\mf p_i$, hence $\widetilde L=L\cap H_{\mf p_i^\infty}$. Since for $\mf p\neq \mf p_i$ the extension $L\cap H_{\mf p^\infty}$ is unramified at $\mf p_i$ and $\mf p_i$ is totally ramified in $L$, we find that $L\cap H_{\mf p^\infty}=K$ for $\mf p\neq \mf p_i$, hence we obtain $\prod_\mf p [L\cap H_{\mf p^\infty}:K]=[\widetilde L:K]$.
\end{proof}

Now we use a modification of Sinnott's module defined in \cite{greither2014linear}. This module $U$ is a $\bb Z[G]$-submodule of $\bb Q[G]\oplus \bb Z^s$ generated over $\bb Z[G]$ by certain elements $\rho_J$, $J\ssq I$. Each $\bb Z$ summand is endowed with the trivial $G$-action and has a standard basis element denoted by $e_j$. 

Define 
\begin{align*}
\Psi:\mc P_{F_I}&\lra U\\
\eta_J&\lmt\rho_{I\setminus J}
\end{align*}
for $\emptyset\neq J\ssq I$ and $\Psi(\mu(K))=0$.

\begin{lem}[{cf.~\cite[Lemma 3.5]{chapdelaine2017annihilators}}]\label{lem:psi}
$\Psi$ is a well-defined $\bb Z[G]$-module homomorphism satisfying $\ker(\Psi)=\mu(K)$ and $U=\Psi(\mc P_{F_I})\oplus (s(G)\bb Z)$.
\end{lem}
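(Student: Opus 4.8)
The plan is to transport the norm relations \eqref{eq:normrel1} into the defining relations of the module $U$ from \cite{greither2014linear}. By construction $U$ is generated over $\bb Z[G]$ by the $\rho_J$, $J\ssq I$, and the only relations among these generators are the formal analogues of \eqref{eq:normrel1}: passage from $F_J$ down to $F_{J\setminus\{j\}}$ corresponds, on the $\bb Q[G]$-side, to multiplication by the appropriate coset sum together with the factor $1-\sigma_j^{-1}$, while passage from $F_{\{j\}}$ down to $K$ produces the basis vector $e_j$ of the $j$-th $\bb Z$-summand. Hence the assignment $\eta_J\mapsto\rho_{I\setminus J}$, $\mu(K)\mapsto 0$, respects all the relations \eqref{eq:normrel1}; since $\mc P_{F_I}$ is generated as a $\bb Z[G]$-module by the $\eta_J$ together with $\mu(K)$, the only point left for well-definedness is that the $\eta_J$ satisfy \emph{no} multiplicative relations beyond the $\bb Z[G]$-module relations generated by \eqref{eq:normrel1} and $\mu(F_I)=\mu(K)$ (Lemma \ref{lem:rootsofunity}).

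To rule out extra relations, and at the same time compute $\ker\Psi$, I would use the combined valuation map $\Lambda\colon\mc P_{F_I}\lra\bb Q[G]\oplus\bb Z^s$ obtained by juxtaposing the logarithmic map $l_{F_I}^*$ (with respect to the fixed prime above $\infty$, as in Section \ref{sec:kronecker}) with the maps recording the $\mf p_j$-ideal content. On the one hand $\ker l_{F_I}\cap\mc O_{F_I}^\times=\mu(F_I)=\mu(K)$, and for the non-unit generators $\eta_{\{j\}}$ the $\mf p_j$-component of $\Lambda$ is nonzero by the Corollary after Remark \ref{rem:hpower}, so $\ker\Lambda=\mu(K)$. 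On the other hand, Kronecker's second limit formula (Proposition \ref{prop:starkUnit}) together with the ideal-content computation gives that $\Lambda(\eta_J)$ equals $\omega\,\rho_{I\setminus J}$ up to the $\bb Z^s$-corrections already built into $U$, where $\omega$ is the element of Section \ref{sec:notat1}; since $\omega$ acts invertibly on the relevant $\bb Q$-subspace (as in the proof of Proposition \ref{prop:indexsinnott}), multiplication by $\omega$ is a $\bb Z[G]$-module isomorphism onto its image. Consequently $\Lambda$ identifies $\mc P_{F_I}/\mu(K)$ with a $\bb Z[G]$-submodule of $U$ that is carried isomorphically onto the span of the $\rho_{I\setminus J}$, which proves simultaneously that $\Psi$ is a well-defined $\bb Z[G]$-homomorphism and that $\ker\Psi=\mu(K)$.

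It remains to establish $U=\Psi(\mc P_{F_I})\oplus(s(G)\bb Z)$. As $J$ runs over the nonempty subsets of $I$, the complement $I\setminus J$ runs over all proper subsets of $I$, so $\Psi(\mc P_{F_I})$ is exactly the $\bb Z[G]$-span of the $\rho_J$ with $J\ssnq I$, and $U$ is obtained by adjoining the single generator $\rho_I$. Using the explicit formula for $\rho_I$ from \cite{greither2014linear} (here $F_I\cap H_{\mf m_I}=F_I$, so the coset-sum factor degenerates and $\rho_I$ is built purely from the $1-\sigma_j^{-1}$ and the $e_j$), one checks, exactly as in \cite[Lemma 3.5]{chapdelaine2017annihilators} and using that $G=\Gal(F_I/K)$ is the direct product of its inertia subgroups (Proposition \ref{prop:Gstructure}), that modulo $\Psi(\mc P_{F_I})$ the generator $\rho_I$ becomes an integral multiple of $s(G)$, that $s(G)\in U$, and that $s(G)\bb Z\cap\Psi(\mc P_{F_I})=0$; this is a finite linear-algebra verification translating the number-field argument verbatim.

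The main obstacle is the middle step: verifying that the image of $\Lambda$ has precisely the $\bb Z[G]$-module structure of $\omega U$, equivalently that the $\eta_J$ satisfy only the relations \eqref{eq:normrel1}. This requires combining the tame-ramification input specific to the function field setting (the conductor $\mf m=\mf m_I$ is squarefree because $p\nmid q$) with the decomposition- and inertia-group bookkeeping of Corollary \ref{cor:1} and Lemma \ref{lem:4}, in order to pin down, for each $J$, the $\bb Z[G]$-submodule of $F_I^\times$ generated by $\eta_J$ (which lies in $F_J$, not in $F_I$), and to match the resulting lattice against the defining presentation of $U$. Once this identification is in place, well-definedness, the computation of the kernel, and the direct-sum decomposition all follow as in \cite{chapdelaine2017annihilators}.
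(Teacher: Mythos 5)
Your proposal follows, in outline, the same strategy as \cite[Lemma 3.5]{chapdelaine2017annihilators}, which is all the paper itself offers here (its proof is a bare citation). Nevertheless there are two substantive issues.

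The central step — matching the image of $\Lambda$ against the module $U$ — is, as you yourself acknowledge, not actually carried out, and the formulation you propose cannot be taken at face value. The element $\omega=hw_\infty\sum_{\chi\neq 1}L(\overline\chi,0)e_\chi$ has zero augmentation, so it annihilates the $\bb Z^s$-summand of $\bb Q[G]\oplus\bb Z^s$ (trivial $G$-action there). Hence $\omega\rho_{I\setminus J}$ always has vanishing $\bb Z^s$-component, while $\Lambda(\eta_{\{j\}})$ has a nonzero $e_j$-component by the Corollary after Remark~\ref{rem:hpower}. The equality ``$\Lambda(\eta_J)=\omega\rho_{I\setminus J}$ up to $\bb Z^s$-corrections'' therefore hides exactly the point that has to be proved: one must verify the $\bb Q[G]$-component via Proposition~\ref{prop:imglog} and, \emph{separately}, verify that the $\mf p_j$-ideal content of $\eta_{\{j\}}$ matches the $e_j$-coefficient in the definition of $\rho_{I\setminus\{j\}}$ in \cite{greither2014linear}, before one can conclude that $\Lambda$ factors through $\Psi$ and that $\ker\Psi=\mu(K)$.

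The identification of $\rho_I$ in your last paragraph appears to have the indexing reversed. Since $\Psi(\eta_J)=\rho_{I\setminus J}$ and, by Proposition~\ref{prop:imglog}, $l_L^*(\varphi_{L,\mf n})=\omega\rho'_{\mf n}$, the generator $\rho_{J'}$ of $U$ corresponds to the Sinnott element $\rho'_{\mf m_{I\setminus J'}}$. In particular $\rho_\emptyset=\Psi(\eta_I)$ is the image of the top generator, and \emph{that} is the element with the degenerate coset-sum factor coming from $F_I\cap H_{\mf m_I}=F_I$; the extra generator $\rho_I$ corresponds instead to $\rho'_{(1)}=s(\Gal(F_I/K))=s(G)$, i.e.\ the trivial conductor, and contains no factors $1-\sigma_j^{-1}$. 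If this is the correct reading of \cite{greither2014linear}, the final step of your argument collapses to the check $s(G)\bb Z\cap\Psi(\mc P_{F_I})=0$ rather than the more elaborate linear-algebra reduction of $\rho_I$ modulo $\Psi(\mc P_{F_I})$ that you sketch. You should confirm the precise definition of $\rho_J$ in \cite{greither2014linear} before finalising this step.
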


\begin{proof}
See \cite[Lemma 3.5]{chapdelaine2017annihilators}. 
\end{proof}

We call
\begin{align*}
\eta:=N_{F_I/L}(\eta_I)
\end{align*}
the \emph{top generator} of both $\mc P_L$ and $\mc C_L$. Set $B:=\Gal(F_I/L)\ssq \Gal(F_I/K)=G$, then we have $\Gamma=\langle \sigma\rangle\cong G/B$.

\begin{lem}[{cf.~\cite[Lemma 4.1]{chapdelaine2017annihilators}}]\label{lem:9}
An elliptic number $\beta\in \mc P_{F_I}$ belongs to $L$ if and only if $\Psi(\beta)$ is fixed by $B$, i.e. $\Psi(\mc P_{F_I})^B=\Psi(\mc P_{F_I}\cap L)$.
\end{lem}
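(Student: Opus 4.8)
The plan is to prove the two inclusions $\Psi(\mc P_{F_I}\cap L)\ssq\Psi(\mc P_{F_I})^B$ and $\Psi(\mc P_{F_I})^B\ssq\Psi(\mc P_{F_I}\cap L)$ separately; the equivalence stated for a single $\beta$ is just these two facts read off pointwise.

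First I would dispose of the easy inclusion. If $\beta\in\mc P_{F_I}\cap L$, then every $b\in B=\Gal(F_I/L)$ fixes $\beta$, and since $\Psi$ is $\bb Z[G]$-equivariant (Lemma \ref{lem:psi}) we get $b\cdot\Psi(\beta)=\Psi(b\beta)=\Psi(\beta)$, so $\Psi(\beta)\in\Psi(\mc P_{F_I})^B$. This also gives the "only if" half of the displayed statement.

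The substantive step is the reverse inclusion. Given $\beta\in\mc P_{F_I}$ with $\Psi(\beta)\in\Psi(\mc P_{F_I})^B$, I would show $\beta\in L$ as follows. By $G$-equivariance, $\Psi(b\beta\cdot\beta^{-1})=b\cdot\Psi(\beta)-\Psi(\beta)=0$ for each $b\in B$, so $\ker(\Psi)=\mu(K)$ (Lemma \ref{lem:psi}) forces $c(b):=b\beta/\beta\in\mu(K)$ for all $b\in B$. The map $c\colon B\to\mu(K)$ is then checked to be a group homomorphism: it is the usual coboundary attached to $\beta$, and since $G=\Gal(F_I/K)$ fixes $\mu(K)\ssq K$ pointwise the relevant $B$-action on coefficients is trivial, whence $c(b_1b_2)=c(b_1)c(b_2)$. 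Now $B\leq G=\Gal(F_I/K)$ and $[F_I:K]=\prod_{j\in I}t_j$ is a $p$-power (Proposition \ref{prop:Gstructure}), so $B$ is a $p$-group, whereas $\abs{\mu(K)}=w_K=q-1$ is prime to $p$ by the standing hypothesis $p\nmid q(q-1)h$. Hence $c$ is trivial, i.e.\ $b\beta=\beta$ for every $b\in B$, which means $\beta\in F_I^B=L$, so $\Psi(\beta)\in\Psi(\mc P_{F_I}\cap L)$. Together with the first inclusion this yields $\Psi(\mc P_{F_I})^B=\Psi(\mc P_{F_I}\cap L)$ and the "if" half.

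I do not expect a genuine obstacle: the only subtlety is that $\Psi(\beta)\in\Psi(\mc P_{F_I})^B$ a priori pins down $\beta$ only up to an element of $\ker(\Psi)=\mu(K)$, so the argument must rule out a nontrivial $\mu(K)$-valued twist of $\beta$ under the action of $B$ — this is exactly the role played by $B$ being a $p$-group together with $p\nmid w_K$ (and by Lemma \ref{lem:rootsofunity}, giving $\mu(F_I)=\mu(K)$, so there is no ambiguity about which roots of unity can occur). Everything else is formal manipulation with the equivariant homomorphism $\Psi$ and its kernel.
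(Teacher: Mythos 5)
Your proof is correct and uses what is almost certainly the same mechanism as the cited Lemma~4.1 of Chapdelaine--Ku\v{c}era (the paper itself only refers to that source): the easy direction is $G$-equivariance of $\Psi$, and the hard direction reduces, via $\ker(\Psi)=\mu(K)$ from Lemma~\ref{lem:psi}, to showing that the homomorphism $b\mapsto b\beta/\beta$ from the $p$-group $B\leq G=\Gal(F_I/K)$ (recall $[F_I:K]=\prod_j t_j$ is a $p$-power by Proposition~\ref{prop:Gstructure}) to $\mu(K)$, whose order $w_K=q-1$ is prime to $p$, must vanish. All steps check out, including the coboundary/homomorphism computation (which relies on $\mu(K)\subset K$ being fixed pointwise by $G$) and the identification $F_I^B=L$.
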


\begin{proof}
See \cite[Lemma 4.1]{chapdelaine2017annihilators}.
\end{proof}

Recall that $n_i$ is the index of the decomposition group of the ideal $\mf P_i\ssq L$ in $\Gamma$. Without loss of generality we can assume 
\begin{align*}
n_1\leq n_2\leq\cdots \leq n_s
\end{align*}
and set $n=n_s=\max\{n_i\mid i\in I\}$. Since $p\mid t_s$ we have $n\mid p^{k-1}$ and by Corollary \ref{cor:1}(iii) we get $t_1=p^k$ and hence $n_1=1$. Let $L'$ be the unique subfield of $L$ containing $K$ such that $[L':K]=n$. Note that $\langle\sigma^n\rangle=\Gal(L/L')$ and that $\mf p_s$ splits completely in $L'/K$. Now we can state Theorem B:

\begin{thm}[{cf.~\cite[Thm.~4.2]{chapdelaine2017annihilators}}]\label{thm:rootofunit}
There is a unique $\alpha\in L$ such that \linebreak $N_{L/L'}(\alpha)=1$ and such that $\eta=\alpha^y$ holds, where $y=\prod_{i=2}^{s-1}(1-\sigma^{n_i})$. This $\alpha$ is an elliptic unit of $F_I$, so that $\alpha\in \mc C_{F_I}\cap L$. Moreover, there is $\gamma\in L^\times$ such that $\alpha=\gamma^{1-\sigma^n}$.
\end{thm}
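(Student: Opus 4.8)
The plan is to follow the strategy of \cite[Thm.~4.2]{chapdelaine2017annihilators}, moving the problem into the combinatorial module $U$ by means of $\Psi$ and reading off the root there. Since $\Psi$ is $\bb Z[G]$-linear and $\eta=N_{F_I/L}(\eta_I)=\eta_I^{s(B)}$, we have $\Psi(\eta)=s(B)\rho_\emptyset$, and by Lemma~\ref{lem:9} an element of $\mc P_{F_I}$ lies in $L$ exactly when its $\Psi$-image is fixed by $B$. So it suffices to produce a $B$-fixed $v\in\Psi(\mc P_{F_I})$ with $yv=s(B)\rho_\emptyset$, lift it to $\alpha_1\in\mc P_{F_I}\cap L$, and then clean up. The divisibility of $s(B)\rho_\emptyset$ by $y$ inside $\Psi(\mc P_{F_I})^B$ is the technical heart of the argument, and I expect it to be the main obstacle. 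I would establish it exactly as in \cite{chapdelaine2017annihilators} and \cite{greither2014linear}: projecting the $\bb Q[G]$-part $\prod_{j\in I}(1-\tau_j)$ of $\rho_\emptyset$ to $\bb Q[\Gamma]$, one uses Lemma~\ref{lem:4}, namely $D_j(L)=\langle\sigma^{n_j}\rangle=\langle\sigma_j|_L,\sigma^{p^k/t_j}\rangle$, to rewrite each image $1-\bar\tau_j$ as an element of $\bb Q[\langle\sigma^{n_j}\rangle]$ with zero augmentation, hence as a multiple of $1-\sigma^{n_j}$; for $2\le j\le s-1$ these factors assemble to $y$, and the contributions in the $e_j$-summands of $U$ are matched by the same bookkeeping as in loc.~cit.

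Granting this, choose $\alpha_1\in\mc P_{F_I}\cap L$ with $\Psi(\alpha_1)=v$. Then $\Psi(\alpha_1^y)=yv=\Psi(\eta)$, so $\alpha_1^y=\eta\zeta$ with $\zeta\in\mu(K)=\ker\Psi$ by Lemma~\ref{lem:psi}. To see $\zeta=1$, note first that iterating the norm relations \eqref{eq:normrel1} down to $K$ gives $N_{F_I/K}(\eta_I)=1$ as soon as $s\ge 2$, because each descent contributes a factor $1-\sigma_j^{-1}$ which any full norm kills; hence $N_{L/K}(\eta)=1$. Applying $N_{L/K}$ to $\alpha_1^y=\eta\zeta$ and using that $y$ acts on $K^\times$ through its augmentation (which is $0$ whenever $y$ is a nonempty product, the case $s=2$ being trivial since then $y=1$ and one simply takes $\alpha=\eta$) yields $\zeta^{p^k}=1$, so $\zeta=1$ because $p\nmid w_K$. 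Thus $\alpha_1^y=\eta$.

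It remains to normalize and to prove uniqueness. \textbf{Uniqueness.} If $\alpha,\alpha'\in L$ both satisfy $\alpha^y=\alpha'^y=\eta$ and have trivial norm to $L'$, put $\beta=\alpha/\alpha'$; then $\beta^y=1$ and $N_{L/L'}(\beta)=1$. Since $L/L'$ is cyclic with group $\langle\sigma^n\rangle$, Hilbert~90 gives $\beta=\delta^{1-\sigma^n}$ for some $\delta\in L^\times$, so $\delta^{(1-\sigma^n)y}=1$. Because $n_i\mid n$ for $2\le i\le s-1$ we have $\langle\sigma^n\rangle\ssq\langle\sigma^{n_i}\rangle$, so every character of $\Gamma$ killing $1-\sigma^{n_i}$ also kills $1-\sigma^n$; hence $(1-\sigma^n)y$ and $1-\sigma^n$ vanish in the same components of $\bb Q[\Gamma]$ and generate the same ideal there. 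As $L^\times/\mu(L)$ is torsion free, this forces $\delta^{1-\sigma^n}\in\mu(L)=\mu(K)$, and raising to the ($p$-power) order of $\sigma^n$ and using $p\nmid w_K$ improves this to $\delta^{1-\sigma^n}=1$, i.e. $\beta=1$. \textbf{Existence of a normalized $\alpha$.} One adjusts $\alpha_1$ by an element of $\mc P_{F_I}\cap L$ annihilated by $y$ so that its norm to $L'$ becomes trivial; that such an adjustment exists is checked as in \cite[\S 4]{chapdelaine2017annihilators}, using the explicit description of the $y$-torsion in $\Psi(\mc P_{F_I})^B$.

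Finally the two supplementary assertions are short. From $N_{L/L'}(\alpha)=1$ and the cyclicity of $L/L'$, Hilbert~90 yields $\gamma\in L^\times$ with $\alpha=\gamma^{1-\sigma^n}$, which is the last claim; in particular the divisor of $\alpha$ is $1-\sigma^n$ applied to the divisor of $\gamma$. On the other hand $\alpha\in\mc P_{F_I}$, so modulo $\mu(K)$ it is a $\bb Z[G]$-combination of the $\eta_J$; since $\eta_J$ is a unit for $\abs{J}\ge 2$ while $\eta_{\{j\}}$ has divisor supported on $S_\infty$ together with the primes above $\mf p_j$, the divisor of $\alpha$ is supported on $S_\infty(L)$ and on the primes above $\mf p_1,\dots,\mf p_s$. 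But $\sigma^n\in D_j(L)=\langle\sigma^{n_j}\rangle$ for every $j$ (again $n_j\mid n$), so $1-\sigma^n$ annihilates the summand $\bb Z[\Gamma/D_j(L)]$ of the divisor group, whence the $\mf p_j$-component of the divisor of $\alpha$ vanishes for all $j$. Hence the divisor of $\alpha$ is supported on $S_\infty(L)$, so $\alpha\in\mc O_L^\times\ssq\mc O_{F_I}^\times$; combined with $\alpha\in\mc P_{F_I}$ this gives $\alpha\in\mc P_{F_I}\cap\mc O_{F_I}^\times=\mc C_{F_I}$, and therefore $\alpha\in\mc C_{F_I}\cap L$.
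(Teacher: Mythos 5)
Your proof takes the same approach as the paper, which, having assembled Lemmas \ref{lem:4}--\ref{lem:9} and the norm relation \eqref{eq:normrel1}, simply cites \cite[Thm.~4.2]{chapdelaine2017annihilators}. Your reconstruction of that argument---passing to the Sinnott-type module via $\Psi$ and reading off divisibility by $y$ there, then using the norm-to-$K$ computation to kill the root of unity, Hilbert 90 on $L/L'$ for uniqueness and for $\gamma$, and the divisor computation with $\alpha=\gamma^{1-\sigma^n}$ to obtain unithood---is sound in the parts you work out, and defers the technical divisibility of $s(B)\rho_\emptyset$ by $y$ in $\Psi(\mc P_{F_I})^B$ to \cite{chapdelaine2017annihilators,greither2014linear} exactly as the paper itself does.
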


\begin{proof}
We have proven all ingredients which are used in the proof of \cite[Thm.~4.2]{chapdelaine2017annihilators}, hence we obtain the same result for function fields. 
\end{proof}

\section{Enlarging the group $\mc C_L$ of elliptic units of $L$}

We label the subfields of $L$ containing $K$ by
\begin{align*}
K=L_0\ssnq L_1\ssnq L_2\ssnq \cdots \ssnq L_k=L\ ,
\end{align*}
hence we obtain $[L_i:K]=p^i$. Moreover, we define
\begin{align*}
M_i:=\{j\in I\mid t_j>p^{k-i}\}\ .
\end{align*}
Since we have already seen that $n_s=1$, we obtain from the definition of $M_i$ that
\begin{align*}
1\in M_1\ssq M_2\ssq\cdots \ssq M_k=I\ .
\end{align*}
For $j\in M_i$ we get $p^i>\frac{p^k}{t_j}$ and with Corollary \ref{cor:1}(i) we obtain that $\mf p_j$ ramifies in $L_i$. On the other hand, if $\mf p_j$ ramifies in $L_i$, this implies that $t_j>[L:L_i]=p^{k-i}$. This shows that the conductor of $L_i$ is equal to $\mf m_{M_i}$ and so $L_i\ssq F_{M_i}$ with Proposition~\ref{prop:Gstructure} applied to $L_i$. Define 
\begin{align*}
\eta_i:=N_{F_{M_i}/L_i}(\eta_{M_i})
\end{align*}
for $i=1,...,k$, then $\eta_k=\eta\in L$ is the top generator of $\mc C_L$. \\

Now we fix $j\in \{1,...,s\}$ and let $L_i=L^{T_j}$, hence the index $i$ is determined by $t_j=p^{k-i}$. By Lemma \ref{lem:4} we get that 
\begin{align*}
\langle \sigma^{n_j}\rangle /\langle \sigma^{p^k/t_j}\rangle=\langle \sigma_j\vert_L, \sigma^{p^k/t_j}\rangle /\langle \sigma^{p^k/t_j}\rangle\ .
\end{align*}
This quotient group can be interpreted as the restriction to $L_i$, since $\sigma^{p^k/t_j}=\sigma^{p^i}$ generates $\Gal(L/L_i)$. Hence we can find a smallest positive integer $c_j$ such that $\sigma^{-c_jn_j}\vert_{L_i}=\sigma_j\vert_{L_i}$. Moreover, we see that $\mf p_j$ splits completely in $L_i/K$ if and only if $n_j=\frac{p^k}{t_j}$, in this case we get in particular that $c_j=1$ since $\sigma^{n_j}$ is already an element of the inertia group of $\mf p_j$ of $L/K$. If $\mf p_j$ does not split completely in $L_i/K$, we find that $n_j<\frac{p^k}{t_j}$ and hence $\langle\sigma^{n_j}\vert_{L_i}\rangle=\langle \sigma_j\vert_{L_i}\rangle$. In each case, we find that $p\nmid c_j$ and hence $1-\sigma^{c_jn_j}$ and $1-\sigma^{n_j}$ are associated in $\bb Z[\Gamma]$.\\

Now let $i\in \{1,...,k\}$ such that $\abs{M_i}>1$. We want to apply Theorem \ref{thm:rootofunit} to the extension $L_i/K$ and obtain an elliptic unit $\alpha_i\in \mc C_{F_{M_i}}\cap L_i$ and a number $\gamma_i\in L_i^\times$ such that
\begin{enumerate}[label=(\roman*)]
\item $\eta_i=\alpha_i^{y_i}$, where $y_i=\prod_{\substack{j\in M_i\\ 1<j<\max M_i}} (1-\sigma^{c_jn_j})$,

\item $\alpha_i=\gamma_i^{z_i}$, where $z_i=1-\sigma^{c_{\max M_i}n_{\max M_i}}$.

\end{enumerate}
Note that the new $c_j$ factors can be obtained since $1-\sigma^{n_j}$ and $1-\sigma^{c_jn_j}$ are associated. In particular we find for $\abs{M_i}=2$ that $y_i=1$ and $\alpha_i=\eta_i$ since the product is empty. For $i\in \{1,...,k\}$ with $\abs{M_i}=1$ we set $\gamma_i=\eta_i$ and $\alpha_i=\eta_i^{1-\sigma}$.

\begin{defn}
The $\bb Z[\Gamma]$-submodule $\overline{\mc C_L}$ of $\mc O_L^\times$ generated by $\mu(K)$ and $\alpha_1,...,\alpha_k$ is called the \emph{extended group of elliptic units}.
\end{defn}

\begin{thm}[{cf.~\cite[Thm.~5.2]{chapdelaine2017annihilators}}]\label{thm:enlarged}
The group of elliptic units $\mc C_L$ of $L$ is a subgroup of $\overline{\mc C_L}$ of index $[\overline{\mc C_L}:\mc C_L]=p^\nu$, where
\begin{align*}
\nu=\sum_{j=1}^k \sum_{\substack{i\in M_j\\1<i<\max M_j}} n_i\ .
\end{align*}
Moreover, setting $\varphi_L:=(\prod_{i=1}^s t_i^{n_i})\cdot \prod_{j=1}^k p^{-n_{\max M_j}}$, we get 
\begin{align*}
p^\nu=\varphi_L\cdot [L:\widetilde L]^{-1}\ 
\end{align*}
and
\begin{align*}
[\mc O_L^\times:\overline{\mc C_L}]=w_\infty^{p^k-1}\cdot \frac{h_L}{h}\cdot \varphi_L^{-1}\ .
\end{align*}

\end{thm}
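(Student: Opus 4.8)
The plan is to compute the index $[\overline{\mc C_L}:\mc C_L]$ generator by generator, following the inductive structure of \cite[Thm.~5.2]{chapdelaine2017annihilators}. Recall that $\mc C_L$ is generated over $\bb Z[\Gamma]$ by $\mu(K)$ and by the intermediate norms $\eta_i = N_{F_{M_i}/L_i}(\eta_{M_i})$ for $i=1,\dots,k$ (with $\eta_k=\eta$), whereas $\overline{\mc C_L}$ is generated by $\mu(K)$ and the $\alpha_i$. First I would record, from items (i) and (ii) above, that for each $i$ with $\abs{M_i}>1$ we have $\eta_i=\alpha_i^{y_i}$ with $y_i=\prod_{\substack{j\in M_i\\ 1<j<\max M_i}}(1-\sigma^{c_jn_j})$, while for $\abs{M_i}=1$ we have $\alpha_i=\eta_i^{1-\sigma}$; in the latter case $\alpha_i$ adds nothing new to the group generated by $\mu(K)$ and $\eta_i$ (as $1-\sigma\in\bb Z[\Gamma]$), so such indices contribute trivially, consistent with the empty-sum convention in the definition of $\nu$. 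Thus the whole index is the product over those $i$ with $\abs{M_i}>1$ of the index contributed by replacing $\eta_i$ with $\alpha_i$, which is $[\langle\mu(K),\alpha_i,\dots\rangle : \langle\mu(K),\eta_i,\dots\rangle]$, and the key local computation is $[\bb Z[\Gamma]\alpha_i : \bb Z[\Gamma]\alpha_i^{y_i}]$ modulo the relations coming from the other generators.

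The heart of the argument is therefore: for a single factor $1-\sigma^{c_jn_j}$ (equivalently $1-\sigma^{n_j}$, since these are associated in $\bb Z[\Gamma]$ as observed before the theorem), the index of $(1-\sigma^{n_j})\bb Z[\Gamma]$-multiplication acting on the relevant cyclic $\bb Z[\Gamma]$-module is $n_j$ — this is the standard fact that $\abs{\bb Z[\Gamma]/(1-\sigma^{m})} = m$ when $\Gamma$ is cyclic of $p$-power order and $p\nmid\dots$ issues do not intervene, here using that $\Psi$ of Lemma \ref{lem:psi} identifies these modules with explicit pieces of Sinnott's module $U$ so that no unexpected torsion appears. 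Iterating over the factors of $y_i$ gives a contribution $\prod_{\substack{j\in M_i\\1<j<\max M_i}} n_j$ to the index from level $i$, and multiplying over $i$ yields exactly $p^\nu$ with $\nu=\sum_{j=1}^k\sum_{\substack{i\in M_j\\1<i<\max M_j}} n_i$. One must be slightly careful that the successive extractions of roots are compatible, i.e.~that the $\alpha_i$ really generate a group containing $\mc C_L$ with no overcounting — this is where Lemma \ref{lem:9} and the torsion-freeness of $U/\,s(G)\bb Z$ (Lemma \ref{lem:psi}) are used, exactly as in Chapdelaine--Ku\v{c}era; I would simply cite that the argument transfers verbatim.

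For the second part I would first establish the identity $p^\nu=\varphi_L\cdot[L:\widetilde L]^{-1}$ purely combinatorially. Writing $\varphi_L=(\prod_{i=1}^s t_i^{n_i})\cdot\prod_{j=1}^k p^{-n_{\max M_j}}$ and using $t_i=p^{k-(\text{level where }\mf p_i\text{ first ramifies})}$ together with the relabeling $M_1\ssq\cdots\ssq M_k=I$, one rewrites $\log_p\varphi_L$ as a double sum over $(j,i)$ and checks it equals $\nu+\log_p[L:\widetilde L]$; the term $[L:\widetilde L]$ appears because $\widetilde L=L\cap H_{\mf p_{j_0}^\infty}$ for the totally ramified prime $\mf p_{j_0}$ of Corollary \ref{cor:1}(iii), and its degree accounts precisely for the "missing" $\max M_j$ terms. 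Finally, combining this with Lemma \ref{lem:indexunit}(i), namely $[\mc O_L^\times:\mc C_L]=w_\infty^{p^k-1}\cdot\frac{h_L}{h[L:\widetilde L]}$, and the already-proven $[\overline{\mc C_L}:\mc C_L]=p^\nu$, gives
\begin{align*}
[\mc O_L^\times:\overline{\mc C_L}]=\frac{[\mc O_L^\times:\mc C_L]}{[\overline{\mc C_L}:\mc C_L]}=\frac{w_\infty^{p^k-1}h_L}{h[L:\widetilde L]\,p^\nu}=w_\infty^{p^k-1}\cdot\frac{h_L}{h}\cdot\varphi_L^{-1}\ ,
\end{align*}
as claimed. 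I expect the main obstacle to be the bookkeeping in the combinatorial identity $p^\nu=\varphi_L[L:\widetilde L]^{-1}$ — getting the ranges of the double sums and the role of $\max M_j$ exactly right — rather than any genuinely new mathematics; the root-extraction index computation itself is essentially forced once Lemma \ref{lem:psi} is in hand.
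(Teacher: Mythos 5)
Your second half is fine: the purely combinatorial identity $p^\nu=\varphi_L\cdot[L:\widetilde L]^{-1}$ and the deduction of $[\mc O_L^\times:\overline{\mc C_L}]$ from Lemma~\ref{lem:indexunit}(i) by dividing by $[\overline{\mc C_L}:\mc C_L]$ is exactly the right bookkeeping (and your identification $\widetilde L = L\cap H_{\mf p_1^\infty}$ is correct). The gap is in the "heart of the argument" for the first assertion. The "standard fact" you invoke, $\abs{\bb Z[\Gamma]/(1-\sigma^{m})}=m$, is false: since $\sigma^m$ has finite order, $1-\sigma^m$ is a zero divisor in $\bb Z[\Gamma]$, and for $m\mid p^k$ the quotient $\bb Z[\Gamma]/(1-\sigma^m)\bb Z[\Gamma]\cong\bb Z[\bb Z/m\bb Z]$ is infinite (of $\bb Z$-rank $m$). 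Restricting to the norm-zero part does not help: $\prod_{\zeta\neq 1}(1-\zeta^m)=0$ as soon as $m>1$. More decisively, the arithmetic does not match the target. Your scheme gives a total index $\prod_{i=1}^k\prod_{j\in M_i,\,1<j<\max M_i} n_j$, whereas the theorem asserts $p^\nu$ with $\nu=\sum_{j}\sum_{i} n_i$. These disagree already in the simplest nontrivial case: take $k=1$, $s\geq 3$; then every $n_j=1$, your product is $1$, but $p^\nu=p^{s-2}$. The correct contribution of each factor $(1-\sigma^{c_jn_j})$ is $p^{n_j}$, not $n_j$, and establishing this requires writing the $\eta_i$ against the explicit $\bb Z$-basis of $\overline{\mc C_L}/\mu(K)$ from Lemma~\ref{lem:Zbasis} and computing the determinant of the resulting (block-triangular) change-of-basis matrix, which is precisely the computation of \cite[Thm.~3.1]{greither2015annihilators}. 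That is what the paper's proof does: it simply refers to that theorem, with the one remark that the extra unit factors $c_j$ (associate units in $\bb Z[\Gamma]$) must be tracked but do not change the determinant. A generator-by-generator reduction to a single-cyclic-module index, as you sketch, does not survive the attempt to make it precise.
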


\begin{proof}
We use the same proof as in \cite[Thm.~3.1]{greither2015annihilators}. Note that we need the factors $c_j$ appearing in the definition of the $\alpha_i$ here.
\end{proof}

\begin{bem}
If $p\nmid w_\infty$, we obtain $\varphi_L\mid h_L$. As in \cite[Remark 5.3]{chapdelaine2017annihilators}, this divisibility statement is really stronger than $[F_I:L]\mid h_L$ (which we obtain since $F_I/L$ is unramified). Indeed, by \cite[Prop.~3.4]{greither2015annihilators} $[F_I:L]\mid \varphi_L$ and we obtain equality if and only if $n_1=\cdots n_{s-1}=1$.
\end{bem}

\section{Semispecial numbers}

We use the same notation as before and fix $m$ which is a power of $p$ such that $p^{ks}\mid m$. We know that for a prime ideal $\mf q$ of $K$ we have
\begin{align*}
\Gal(H_\mf q/H)\cong (\mc O_K/\mf q)^\times/ \im(\mu(K))
\end{align*}
via Artin's reciprocity map. In particular, $\Gal(H_\mf q/H)$ is cyclic. This enables us to state the next

\begin{defn}
For a prime ideal $\mf q$ of $K$ such that $\abs{\mc O_K/\mf q}\equiv 1 \mod m$, we define $K[\mf q]$ to be the (unique) subfield of $H_\mf q$ containing $K$ such that $[K[\mf q]:K]=m$. For a finite field extension $M/K$, we define $M[\mf q]:=M K[\mf q]$.
\end{defn}

Note that since $\abs{\mc O_K/\mf q}\equiv 1\mod m$ and $p\nmid \abs{\mu(K)}$ we get that the order of $\Gal(H_\mf q/H)$ is divisible by $m$. Hence we get the existence and uniqueness of $K[\mf q]$ from the fact that $p\nmid h$ and \cite[Lemma 2.1]{chapdelaine2017annihilators}. Since $K[\mf q]$ is contained in $H_\mf q$ it is only ramified at $\mf q$. Moreover, since $p\nmid h$ we get that $H\cap K[\mf q]=K$ and hence it is totally ramified at $\mf q$. Finally, since $p\nmid \abs{\mc O_K/\mf q}$ we find that this ramification is tame.

\begin{defn}
Let $\mc Q_m$ be the set of all prime ideals $\mf q$ of $K$ such that
\begin{enumerate}[label=(\roman*)]

\item $\abs{\mc O_K/\mf q}\equiv 1+m\mod m^2$,

\item $\mf q$ splits completely in $L$,

\item for each $j=1,...,s$, the class of $x_j$ is an $m$-th power in $(\mc O_K/\mf q)^\times$.

\end{enumerate}
\end{defn}

%
%

Now we want to study condition (iii) in some more detail. Let $\mf q$ be such that $\abs{\mc O_K/\mf q}\equiv 1\mod m$. Since $H\cap K[\mf q]=K$, we get $\Gal(H[\mf q]/H)\cong \Gal(K[\mf q]/K)$ by restriction. The first group is the unique quotient of the cyclic group $\Gal(H_\mf q/H)$ of order $m$, hence it is obtained by factoring out $m$-th powers. Therefore we get with the Artin reciprocity map and $p\nmid w_K$
\begin{align*}
(\mc O_K/\mf q)^\times/m\cong \Gal(H[\mf q]/H)\cong \Gal(K[\mf q]/K)\ ,
\end{align*}
where the composition map takes the class of $\alpha\in \mc O_K\setminus \mf q$ to $(\alpha\mc O_K, K[\mf q]/K)$. Now the facts that $x_j\mc O_K=\mf p_j^h$ and $p\nmid h$ imply that condition (iii) is equivalent to 
\begin{align*}
(\mf p_j, K[\mf q]/K)=1\qquad \forall j=1,...,s\ .
\end{align*}

\begin{defn}\label{defn:semispecial}
A number $\varepsilon\in L^\times$ is called \emph{$m$-semispecial} if for all but finitely many $\mf q\in \mc Q_m$, there exists a unit $\varepsilon_\mf q\in \mc O_{L[\mf q]}^\times$ satisfying

\begin{enumerate}[label=(\roman*)]
\item $N_{L[\mf q]/L}(\varepsilon_\mf q)=1$,

\item if $\mf q_{L[\mf q]}$ is the product of all primes of $L[\mf q]$ above $\mf q$, then $\varepsilon$ and $\varepsilon_\mf q$ have the same image in $(\mc O_{L[\mf q]}/\mf q_{L[\mf q]})^\times/(m/p^{k(s-1)})$.

\end{enumerate}
\end{defn}

Since each $\mf q\in \mc Q_m$ is totally ramified in $K[\mf q]/K$ and splits completely in $L/K$, we find that $L[\mf q]/L$ is totally ramified at each prime above $\mf q$ and we obtain \linebreak $L\cap K[\mf q]=K$. Therefore, the two restriction maps $\Gal(L[\mf q]/L)\lra \Gal(K[\mf q]/K)$ and $\Gal(L[\mf q]/K[\mf q])\lra \Gal(L/K)$ are isomorphisms.

\begin{thm}[{cf.~\cite[Thm.~6.4]{chapdelaine2017annihilators}}]\label{thm:19}
The elliptic unit $\alpha\in \mc C_{F_I}\cap L$ from Theorem~\ref{thm:rootofunit} is $m$-semispecial.
\end{thm}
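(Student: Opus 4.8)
The plan is to follow the structure of the proof of \cite[Thm.~6.4]{chapdelaine2017annihilators} and verify that every step goes through in the function field setting. Fix a prime $\mf q \in \mc Q_m$ (excluding finitely many bad ones). The key object to construct is a \emph{Gauss-sum-type} element in $L[\mf q]$ whose valuation at the primes above $\mf q$ is controlled and which, modulo $\mf q_{L[\mf q]}$, matches $\alpha$. More precisely, recall that $\alpha = \gamma^{1-\sigma^n}$ for some $\gamma \in L^\times$ (Theorem \ref{thm:rootofunit}), and $\eta = \alpha^y$ with $y = \prod_{i=2}^{s-1}(1-\sigma^{n_i})$; since $\eta$ is a norm of ramified elliptic units $\eta_J = N_{H_J/F_J}(\alpha_{\mf m_J})$ coming from torsion points $\lambda_{\mf m_J}$ of the Drinfeld module, the elliptic unit $\alpha$ inherits an explicit description in terms of these $\lambda$'s. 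The first step is therefore to pass to the field $F_I[\mf q] = F_I K[\mf q]$, whose conductor is $\mf m_I \mf q$, and express $\alpha$ via the Drinfeld-torsion elements $\lambda_{\mf m_J \mf q}$ attached to the larger modulus, using the norm relation of Proposition \ref{prop:normrel} in the case $\mf p = \mf q$, $\mf p \nmid \mf n$.

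The second step is the heart of the argument. The norm relation gives $N_{L[\mf q]/L}$ (or rather the analogous norm down from the layer ramified at $\mf q$) of an elliptic unit attached to modulus $\mf m_J \mf q$ equal to $\eta_J^{1-\sigma_{\mf q}^{-1}}$. Since $\mf q$ splits completely in $L$, the Frobenius $\sigma_{\mf q}$ acts trivially on $L$, so this norm is trivial — this is exactly what gives condition (i) of $m$-semispeciality, $N_{L[\mf q]/L}(\varepsilon_{\mf q}) = 1$, once we set $\varepsilon_{\mf q}$ to be the appropriate root (extracted by the same root-taking machinery of Theorem \ref{thm:rootofunit} applied over $K[\mf q]$, using that $1-\sigma^{c_j n_j}$ and $1-\sigma^{n_j}$ are associated and that condition (iii) on $\mc Q_m$ forces $(\mf p_j, K[\mf q]/K) = 1$). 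For condition (ii) one must compute the image of $\varepsilon_{\mf q}$ in $(\mc O_{L[\mf q]}/\mf q_{L[\mf q]})^\times / (m/p^{k(s-1)})$ and compare it with the image of $\alpha$. Here one uses the congruence $\lambda_{\mf m_J \mf q} \equiv \lambda_{\mf m_J} \pmod{\mathfrak Q}$ for $\mathfrak Q \mid \mf q$ (reduction of torsion points: the $\mf q$-part of the torsion becomes trivial modulo $\mf q$), which is the Drinfeld-module analogue of the classical statement that a primitive $\mf q$-power root of unity is $\equiv 1$ modulo the prime above $\mf q$; and one tracks how the exponent $y(1-\sigma^n)$ together with the $p$-power denominators (there are $s-1$ root extractions, each contributing a factor $p^{-k}$ at worst, hence the $p^{k(s-1)}$ in the definition) interact with the cyclicity of $(\mc O_K/\mf q)^\times/m \cong \Gal(K[\mf q]/K)$. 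The condition $\abs{\mc O_K/\mf q} \equiv 1 + m \bmod m^2$ is what pins down the image precisely enough modulo $m/p^{k(s-1)}$.

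The main obstacle I expect is purely technical bookkeeping in step two: correctly identifying which root of which elliptic number plays the role of $\varepsilon_{\mf q}$, and verifying that the unavoidable ambiguity by roots of unity (cf.\ Remark \ref{rem:w_K}, and the last equation of Proposition \ref{prop:normrel} which holds only modulo $\mu(K)$) does not corrupt the congruence in (ii). This is harmless because $\mu(F_I) = \mu(K)$ has order $w_K = q-1$ with $p \nmid w_K$, so roots of unity are already $m$-th powers in $(\mc O_{L[\mf q]}/\mf q_{L[\mf q]})^\times$ and disappear modulo $m/p^{k(s-1)}$; but it must be checked carefully. Everything else — the reduction-of-torsion congruence, the norm relations, the structure of $\Gal(K[\mf q]/K)$ — is either established earlier in the paper or is a direct transcription of \cite{hayes1985stickelberger} and \cite{chapdelaine2017annihilators}, so the proof reduces to: \emph{take the proof of \cite[Thm.~6.4]{chapdelaine2017annihilators} verbatim, replacing cyclotomic/elliptic units by the Drinfeld elliptic units $\eta_J$, roots of unity $\zeta_n$ by torsion points $\lambda_{\mf m_J}$, and citing Proposition \ref{prop:normrel}, Theorem \ref{thm:rootofunit}, and Lemma \ref{lem:rootsofunity} where the corresponding facts are used.}
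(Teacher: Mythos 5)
The high-level plan is right: follow \cite[Thm.~6.4]{chapdelaine2017annihilators}, use the norm relations and the root-extraction machinery of Theorem \ref{thm:rootofunit} applied over $K[\mf q]$ to manufacture $\varepsilon_\mf q$, and appeal to $p\nmid w_K$ to dispose of the roots-of-unity ambiguity. Your handling of condition (i) of Definition \ref{defn:semispecial} via the norm relation and the complete splitting of $\mf q$ in $L$ is also correct. But there is a genuine gap in your treatment of condition (ii): the key congruence the paper actually proves is Proposition \ref{prop:21},
\begin{align*}
\hat{\eta}^{Q(1-\sigma)}\equiv \eta^{(1-\sigma)\frac{Q-1}{m}}\mod \mf q_{L[\mf q]}\,,
\end{align*}
and the paper does \emph{not} obtain it by a direct reduction-of-torsion-points argument $\lambda_{\mf m_J\mf q}\equiv\lambda_{\mf m_J}\pmod{\mf Q}$ as you propose. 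Instead it introduces an auxiliary prime $\mf l$, found by applying \v{C}ebotarev (Lemma \ref{lem:22}) to the non-abelian degree-$p$ extension $M=K(\zeta_m,x^{1/p})$ of the constant-field extension $K(\zeta_m)$, where $x$ generates $\mf q^h$. The prime $\mf l$ is required to satisfy $(\mf l, L[\mf q]/K)=\sigma^{-1}$ and to make $\mf q$ inert in $K[\mf l]/K$; one then works with the elliptic units $\eta_\mf l$ and $\hat\eta_\mf l$ at the auxiliary moduli $\mf l\mf m_I$ and $\mf l\mf q\mf m_I$, uses the norm relations to descend them, and exploits the fact that in the residue ring $\mc O_{L[\mf q\mf l]}/\mf q_{L[\mf q]}\mc O_{L[\mf q\mf l]}$ the Frobenius $\sigma_\mf q$ is raising to the $Q$-th power while the inertia group $\Gal(L[\mf q\mf l]/L[\mf l])$ acts trivially. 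This is exactly how the factors $Q$ and $\frac{Q-1}{m}$ arise; a bare ``torsion points reduce'' heuristic does not produce them, and you give no account of how it would.

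Two further points your sketch passes over but the paper must address because they are specific to function fields: first, the field $K(\zeta_m)$ is here a constant-field extension (hence unramified everywhere), and one must verify that $x$ is not a $p$-th power in $K(\zeta_m)$ — the paper does this using that $K(\zeta_m)/K$ is unramified and $p\nmid h$; second, one needs the function-field version of \v{C}ebotarev (\cite[Thm.~9.13B]{rosen2013number}), with the extra constraint $\abs{\mc O_K/\mf l}\equiv 1\bmod m$ on the chosen prime. Without the auxiliary prime construction and these verifications, your proposal does not reach the congruence that drives condition (ii), so it is incomplete at the crucial step.
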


\begin{proof}
Recall that $\alpha$ is a $y$-th root of the top generator $\eta$ of $\mc C_L$. We need to show that for almost all $\mf q\in \mc Q_m$, there exists an $\varepsilon_\mf q$ satisfying the conditions (i) and (ii) of Definition \ref{defn:semispecial}. In fact, we can construct such an $\varepsilon_\mf q$ for \emph{each} $\mf q\in \mc Q_m$ with the methods of \cite[Thm.~6.4]{chapdelaine2017annihilators}. There is only a slight change in the result which implies the congruence relation. Therefore we skip the proof of the theorem here and refer to \cite{chapdelaine2017annihilators} once again. Our version of Proposition~6.6 is stated below.
\end{proof}

\begin{prop}[{cf.~\cite[Prop.~6.6]{chapdelaine2017annihilators}}]\label{prop:21}
Let $\mf q\in \mc Q_m$, $Q:=\abs{\mc O_K/\mf q}$ and let $\mf q_{L[\mf q]}$ be the product of all primes of $L[\mf q]$ above $\mf q$. Then 
\begin{align*}
\hat{\eta}^{Q(1-\sigma)}\equiv \eta^{(1-\sigma)\frac{Q-1}{m}}\mod \mf q_{L[\mf q]}\ ,
\end{align*}
where $\eta$ is the top generator of $\mc C_L$ and $\hat{\eta}$ is the top generator of $\mc C_{L[\mf q]}$.
\end{prop}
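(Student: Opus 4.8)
The plan is to reduce the congruence, via the norm-relation compatibilities built into the elliptic units, to a statement about a single prime power $\mf p^{ks}$ dividing $\mf m_I \mf q$, and there to use Kronecker's second limit formula incarnation of the $\lambda$-values together with the defining congruence $Q \equiv 1 + m \bmod m^2$ for $\mf q \in \mc Q_m$. Concretely, recall that $\eta = N_{F_I/L}(\eta_I)$ with $\eta_I = N_{H_I/F_I}(\alpha_{\mf m_I}) = \varphi_{F_I,\mf m_I}^{1/h}$, and that $\alpha_{\mf m_I} = \lambda_{\mf m_I}^{w_\infty}$ is built from the torsion point $\lambda_{\mf m_I} = \xi(\mf m_I)e_{\mf m_I}(1)$; and similarly $\hat\eta$ comes from $\lambda_{\mf m_I \mf q}$ with an extra factor $\mf q$ in the modulus. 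First I would pass to the field $F_I[\mf q] = F_I K[\mf q]$ (which lies in $K_{\mf m_I \mf q}$ up to the relevant real subfield business), so that both $\lambda_{\mf m_I}$ and $\lambda_{\mf m_I \mf q}$ are available, and use the norm relation of Proposition \ref{prop:normrel} (case $\mf p \nmid \mf n$, $\mf n \neq (1)$) in the form $N_{K_{\mf m_I \mf q}/K_{\mf m_I}}(\lambda_{\mf m_I \mf q}) = \lambda_{\mf m_I}^{1 - \sigma_\mf q^{-1}}$, where $\sigma_\mf q$ is the Frobenius of $\mf q$; since $\mf q$ splits completely in $L$, $\sigma_\mf q$ acts trivially after descending to $L$, which is exactly why the factor $(1-\sigma)$ appears in the statement (it is there to kill the part of the Galois action that does not descend, while the genuinely new content survives).

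The heart of the matter is the local computation modulo $\mf q_{L[\mf q]}$. Here I would work modulo a prime $\mf Q$ of $K_{\mf m_I \mf q}$ above $\mf q$, and use the standard congruence for the torsion point $\lambda_{\mf m_I \mf q}$ of a Drinfeld module at a prime of good reduction above $\mf q$: reduction mod $\mf Q$ sends $\lambda_{\mf m_I \mf q}$ to a $\mf q$-torsion-and-$\mf m_I$-torsion point, and the Frobenius $\Frob_\mf q$ raising to the $Q$-th power acts on it through multiplication by a generator of $\mf q$ in the residue Drinfeld module — this is the function-field analogue of the congruence $\zeta_{mq}^{q} \equiv \zeta_{mq}^{\sigma_q} \bmod \mathfrak q$ used by Greither–Kučera and Chapdelaine–Kučera. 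Combining the good-reduction congruence for $\hat\eta$ (a power of $\lambda_{\mf m_I \mf q}$) with $\hat\eta^{Q} \equiv \hat\eta^{\Frob_\mf q} \bmod \mf q_{L[\mf q]}$ and then applying the norm $N_{K_{\mf m_I \mf q}/\text{(appropriate subfield)}}$ turns $\hat\eta^{Q(1-\sigma)}$ into $\eta^{(1-\sigma)\cdot e}$ for the exponent $e$ governed by the index $[K_{\mf m_I \mf q}:K_{\mf m_I}]$ in the ray class group; the condition (i) defining $\mc Q_m$, namely $Q \equiv 1 + m \bmod m^2$, is precisely what forces $e \equiv \frac{Q-1}{m} \bmod (\text{the relevant modulus})$, giving the stated exponent $(1-\sigma)\frac{Q-1}{m}$.

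I expect the main obstacle to be the bookkeeping of the various intermediate fields and their norm maps — making sure that the passage between $K_{\mf m_I \mf q}$, $H_{\mf m_I \mf q}$, $F_I[\mf q]$ and $L[\mf q]$ is done consistently, that the factor $h$ (from $\eta_I = \varphi^{1/h}$ versus the $\varphi_{L,\mf n}$ which carry an $h$-th power) and the factor $w_\infty = q^{d_\infty}-1$ (from $\alpha_{\mf m} = \lambda_{\mf m}^{w_\infty}$) are tracked, and that the hypotheses $p \nmid q h w_K$ are used to guarantee these extra factors are invertible modulo the power of $p$ we care about, so that taking the $(1-\sigma)$-part and reducing modulo $m/p^{k(s-1)}$ eventually is legitimate. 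The Drinfeld-module reduction congruence itself should be essentially the same as in Hayes \cite{hayes1985stickelberger}, so the genuinely new analytic content is minimal; as Theorem \ref{thm:19} indicates, the only real change from \cite[Prop.~6.6]{chapdelaine2017annihilators} is a mild adjustment in the exponent coming from the slightly different normalization of $\alpha_{\mf m}$ and of $\partial(\sigma)$ adopted in Remark \ref{rem:w_K}, and I would track that change carefully rather than reprove the whole congruence from scratch.
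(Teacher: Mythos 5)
Your proposal takes a genuinely different route from the paper, and unfortunately it has a gap that I do not see how to close with the tools you describe. The paper's proof introduces an \emph{auxiliary prime} $\mf l$ of $K$, constructed via \v{C}ebotarev (Lemma \ref{lem:22} in the text), subject to three conditions: $\abs{\mc O_K/\mf l}\equiv 1\bmod m$, the Frobenius $(\mf l, L[\mf q]/K)$ is the chosen generator $\sigma^{-1}$, and $\mf q$ is inert in $K[\mf l]/K$. One then forms the auxiliary elliptic units $\eta_\mf l\in L[\mf l]$ and $\hat\eta_\mf l\in L[\mf q\mf l]$, and the entire computation takes place in the ring $\mc O_{L[\mf q\mf l]}/\mf q_{L[\mf q]}\mc O_{L[\mf q\mf l]}\cong \mc O_{L[\mf l]}/\mf q\mc O_{L[\mf l]}$, chaining together the norm relations $N_{L[\mf q\mf l]/L[\mf q]}(\hat\eta_\mf l)=\hat\eta^{1-\sigma}$, $N_{L[\mf q\mf l]/L[\mf l]}(\hat\eta_\mf l)=\eta_\mf l^{1-\sigma_\mf q^{-1}}$ and $N_{L[\mf l]/L}(\eta_\mf l)=\eta^{1-\sigma}$ with the fact that on the residue ring the two norms act as raising to the powers $\sum_{i=0}^{m-1}Q^i=mr$ and $m$ respectively, while $\sigma_\mf q$ acts as raising to the $Q$-th power. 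Your proposal never introduces $\mf l$ and instead tries to work entirely inside $L[\mf q]$ (or $K_{\mf m_I\mf q}$) by a direct reduction argument.

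That direct approach breaks down at two places. First, the factor $(1-\sigma)$ in the statement does \emph{not} arise from ``killing the part of the Galois action that does not descend'' via $\sigma_\mf q$; it arises from the \emph{auxiliary} Frobenius $\sigma_\mf l=\sigma^{-1}$ in the norm relation $N_{L[\mf l]/L}(\eta_\mf l)=\eta^{1-\sigma}$. Since $\mf q$ splits completely in $L/K$, the direct norm relation from $L[\mf q]$ to $L$ gives $N_{L[\mf q]/L}(\hat\eta)=\eta^{1-\sigma_\mf q^{-1}}=1$ up to a root of unity --- i.e.\ norming down from $L[\mf q]$ annihilates the elliptic unit entirely and gives no way to compare $\hat\eta$ with $\eta$. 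Second, the congruence $\hat\eta^Q\equiv\hat\eta^{\Frob_\mf q}$ that you invoke has no meaning as stated, because the extension $L[\mf q]/L$ is \emph{totally ramified} at $\mf q$, so there is no Frobenius element acting on $\hat\eta$ through which one could express $Q$-th power reduction. A Frobenius at $\mf q$ only becomes available after passing to a larger field in which $\mf q$ is unramified, and the choice of that field (namely $L[\mf q\mf l]$, with $\mf q$ inert in $L[\mf l]/L$ by condition (iii) of Lemma \ref{lem:22}) is precisely the content you are missing. The Drinfeld-module good-reduction congruence for torsion points that you sketch is true, but it lives inside $K_{\mf m_I\mf q}$, while the congruence to be proved is between an element of $L[\mf q]$ and an element of $L$, and you have not supplied the bridge; the auxiliary prime $\mf l$ is that bridge.

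I would recommend revisiting the proof of \cite[Prop.~6.6]{chapdelaine2017annihilators} (or \cite[Prop.~8]{greither2004annihilators}): the \v{C}ebotarev construction of $\mf l$, and the careful verification that $\mf q$ is inert in $K[\mf l]$ (which in the function field setting uses the constant field extension $K_m=K(\zeta_m)$ and the non-abelian extension $M=K_m(x^{1/p})$ for $x\mc O_K=\mf q^h$), is the genuinely non-trivial part that needs to be re-established. Your remarks about tracking the factors $h$ and $w_\infty$ and using $p\nmid qhw_K$ to invert them are correct but peripheral; the missing idea is the auxiliary prime.
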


\begin{proof}
Let $x\in \mc O_K$, such that $x\mc O_K=\mf q^h$. Let $K_m:=K(\zeta_m)$, where $\zeta_m$ is a primitive $m$-th root of unity. Then $K_m/K$ is a constant field extension and hence, it is unramified everywhere. Moreover, it is an abelian extension. Now we can define $M:=K_m(x^{1/p})$, and since $\mc O_K^\times=\mu(K)$, $p\nmid \abs{\mu(K)}$ and $K_m$ contains a primitive $p$-th root of unity, this definition is independent of the choice of the generator $x$ and of its $p$-th root. Then $M/K$ is a Galois extension. We claim that $x$ is not a $p$-th power in $K_m$. If $x=\alpha^p$, then the valuation of $x$ at $\mf q$ would be $p$-times the valuation of $\alpha$ at $\mf q$ since $K_m/K$ is unramified. But $x\mc O_K=\mf q^h$ and since $p\nmid h$, this is a contradiction.
Hence the extension $M/K_m$ is cyclic of degree $p$. For finishing the proof, we need the next

\begin{lem}\label{lem:22}
Let $\mf q\in \mc Q_m$ and let $\sigma$ be the unique generator of $\Gal(L[\mf q]/K[\mf q])$ which restricts to the original generator of $\Gal(L/K)$. Then there exists a prime $\mf l$ of $K$ such that
\begin{enumerate}[label=(\roman*)]
\item $\abs{\mc O_K/\mf l}\equiv 1\mod m$,

\item $\mf l$ is unramified in $L[\mf q]$ and $(\mf l, L[\mf q]/K)=\sigma^{-1}$,

\item $\mf q$ is inert in $K[\mf l]/K$.

\end{enumerate}
\end{lem}
\begin{proof}
By an explicit analysis of the Galois automorphisms, one can check that $K_m/K$ is an abelian extension whereas $M/K_m$ is not. Since $[M:K_m]=p$, there are no intermediate fields and hence $K_m/K$ is the maximal abelian subextension of $M$. This implies that $M\cap L[\mf q]=K_m\cap L[\mf q]$, since $L[\mf q]/K$ is an abelian extension. Since $K_m\cap L[\mf q]$ is unramified and $p\nmid h$, we find $K_m\cap L[\mf q]=K$. Then there exists a $\tau\in \Gal(L[\mf q]\cdot M/K)$ which restricts to $\sigma^{-1}\in \Gal(L[\mf q]/K)$ and to a generator of $\Gal(M/K_m)\ssq \Gal(M/K)$.\\

Using a variant of \v{C}ebotarev's Density Theorem (cf.~\cite[Thm.~9.13B]{rosen2013number}), we see that there exists a prime $\mf l$ such that the Frobenius of $\mf l$ is the conjugacy class of $\tau$ and $\abs{\mc O_K/\mf l}\equiv 1\mod m$. Then the first two conditions are satisfied and it remains to prove that $\mf q$ is inert in $K[\mf l]$.\\

Since $\tau$ acts as the identity on $K_m$, we find that $\mf l$ splits completely in $K_m/K$. Let $\mf L$ be a prime of $K_m$ over $\mf l$, then $\mc O_{K_m}/\mf L\cong \mc O_K/\mf l$. Moreover, since $$\langle \tau\vert_M\rangle=\Gal(M/K_m)	\cong \bb Z/p\bb Z,$$ $\mf L$ must be inert in $M$. It is easily seen that $\mc O_M/\mf L\mc O_M\cong (\mc O_{K_m}/\mf L)[\xi]$, where $\xi$ is the class of $x^{1/p}$ modulo $\mf L\mc O_M$. If $x$ was a $p$-th power in $(\mc O_{K_m}/\mf L)^\times$, this extension would be trivial, hence the inertia degree of $\mf L$ would be one. This is a contradiction, since $\mf L$ is inert in $M$, so we have shown that $x$ cannot be a $p$-th power in $(\mc O_K/\mf l)^\times$.\\

Recall that we get $(\mc O_K/\mf l)^\times/m\cong \Gal(K[\mf l]/K)$ from Artin's Reciprocity Theorem and $p\nmid w_K$. Since $x$ is not a $p$-th power in $(\mc O_K/\mf l)^\times$, it follows that the Frobenius $(x\mc O_K, K[\mf l]/K)=(\mf q, K[\mf l]/K)^h$ is not a $p$-th power in $\Gal(K[\mf l]/K)$. But since $\Gal(K[\mf l]/K)$ is cyclic of order $m$ and $p\nmid h$, we obtain that $(\mf q, K[\mf l]/K)$ generates $\Gal(K[\mf l]/K)$ and hence $\mf q$ is inert in $K[\mf l]$.
\end{proof}

Using the prime $\mf l$ satisfying the conditions of the previous lemma, we can define the elliptic units
\begin{align*}
\eta_\mf l&:=N_{H_{\mf l\mf m_I}/L[\mf l]}(\alpha_{\mf l\mf m_I})\ ,\\
\hat{\eta}_{\mf l}&:=N_{H_{\mf l\mf q\mf m_{I}}/L[\mf q\mf l]}(\alpha_{\mf l\mf q\mf m_{I}})\ ,
\end{align*}
where $L[\mf q\mf l]$ is the compositum of $L[\mf q]$ and $L[\mf l]$. Using the norm relation, we find
\begin{align*}
N_{L[\mf q\mf l]/L[\mf l]}(\hat{\eta}_\mf l)&=\eta_\mf l^{1-\sigma_\mf q^{-1}}\ ,\\
N_{L[\mf q\mf l]/L[\mf q]}(\hat{\eta}_\mf l)&=\hat{\eta}^{1-\sigma_\mf l^{-1}}=\hat{\eta}^{1-\sigma}\ ,\\
N_{L[\mf l]/L}(\eta_\mf l)&=\eta^{1-\sigma_\mf l^{-1}}=\eta^{1-\sigma}\ ,
\end{align*}
where $\sigma_\mf q=(\mf q, L[\mf l]/K)$ and $\sigma_\mf l=(\mf l,L[\mf q]/K)=\sigma^{-1}$ by condition (ii).\\

Since $\mf q\in \mc Q_m$, $\mf q$ splits completely in $L/K$ and by condition (iii), the primes of $L$ above $\mf q$ are inert in $L[\mf l]/L$. Then each prime of $L[\mf q]$ above $\mf q$ must also be inert in $L[\mf q\mf l]/L[\mf q]$. Moreover, since each prime above $\mf q$ is unramified in $L[\mf l]/L$ and totally ramified in $L[\mf q]/L$, it is also totally ramified in $L[\mf q\mf l]/L[\mf l]$, hence the product of all primes of $L[\mf q\mf l]$ above $\mf q$ is given by $\mf q_{L[\mf q]}\mc O_{L[\mf q\mf l]}$. Therefore, we get the following isomorphism of rings
\begin{align*}
\mc O_{L[\mf q\mf l]}/\mf q_{L[\mf q]}\mc O_{L[\mf q\mf l]}\cong \mc O_{L[\mf l]}/\mf q\mc O_{L[\mf l]}\ .
\end{align*}
Since $L[\mf q]$ and $L[\mf l]$ are linearly disjoint over $L$ and $\mf q$ splits completely in $L/K$, we can extend $\sigma_\mf q\in \Gal(L[\mf l]/K)$ to $L[\mf q\mf l]$ such that this extension (also denoted by $\sigma_\mf q$) restricts to the identity on $L[\mf q]$. In particular, $\sigma_\mf q$ generates $\Gal(L[\mf q\mf l]/L[\mf q])$.\\

From the above isomorphism, we get that $\sigma_\mf q$ acts as raising to the $Q$-th power on $\mc O_{L[\mf q\mf l]}/\mf q_{L[\mf q]}\mc O_{L[\mf q\mf l]}$. Moreover, the group $\Gal(L[\mf q\mf l]/L[\mf l])$ is the inertia group at $\mf q$ and acts trivially on $\mc O_{L[\mf q\mf l]}/\widetilde{\mf q}\mc O_{L[\mf q\mf l]}$. Therefore, we can express the action of the norms $N_{L[\mf q\mf l]/L[\mf l]}$ and $N_{L[\mf q\mf l]/L[\mf q]}$ on $\mc O_{L[\mf q\mf l]}/\mf q_{L[\mf q]}\mc O_{L[\mf q\mf l]}$ as raising to the power $m$ respectively to the power $\sum_{i=0}^{m-1} Q^i$. Since $Q\equiv 1\mod m$, there exists a positive integer $r$ such that $\sum_{i=0}^{m-1} Q^i=mr$. Combining our results, we get that
\begin{align*}
\hat{\eta}^{Q(1-\sigma)}&\equiv \hat{\eta}_{\mf l}^{Qmr}\equiv \eta_\mf l^{Qr(1-\sigma_\mf q^{-1})}\equiv \eta_\mf l^{r(Q-1)}\equiv (\eta_\mf l^{mr})^{\frac{Q-1}{m}}\\
&\equiv \eta^{(1-\sigma)\frac{Q-1}{m}}\mod \mf q_{L[\mf q]}\mc O_{L[\mf q\mf l]}\ .
\end{align*}
Since the natural map $\mc O_{L[\mf q]}/\mf q_{L[\mf q]}\lra \mc O_{L[\mf q\mf l]}/\mf q_{L[\mf q]}\mc O_{L[\mf q\mf l]}$ is injective, we obtain the desired result.

\end{proof}

\section{Annihilating the ideal class group}\label{sec:CKend}

Using the same notation as before, we define
\begin{align*}
\mu_i:=n_{\max M_i}\ .
\end{align*}
This is always a power of $p$ and since $M_i\ssq M_{i+1}$, we get $\mu_i\leq \mu_{i+1}$. We call an index $i\in\{1,...,k-1\}$ a \emph{jump} if $\mu_i<\mu_{i+1}$. Further, we declare $0$ and $k$ to be jumps and set $\mu_0=0$. Then we get the next

\begin{lem}[{cf.~\cite[Lemma 7.1]{chapdelaine2017annihilators}}]\label{lem:Zbasis}
Let $0=s_0<s_1<...<s_\kappa=k$ be the ordered sequence of all jumps. Then the set
\begin{align*}
\bigcup_{t=1}^\kappa \{\alpha_{s_t}^{\sigma^i}\mid 0\leq i<p^{s_t}-p^{s_{t-1}}\}
\end{align*}
is a $\bb Z$-basis of $\overline{\mc C_L}/\mu(K)$.
\end{lem}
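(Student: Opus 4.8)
The plan is to reduce the statement to a pure generation statement via a rank count, and then to produce enough relations among the conjugates of the $\alpha_i$ to cut the obvious generating set down to the displayed one. First I would record that $\overline{\mc C_L}/\mu(K)$ is free of rank $p^k-1$: applying Lemma~\ref{lem:rootsofunity} to $L\ssq F_I$ gives $\mu(L)=\mu(K)$, so $\overline{\mc C_L}/\mu(K)$ is a subgroup of the free abelian group $\mc O_L^\times/\mu(L)$ of rank $[L:K]-1=p^k-1$, and it has finite index by Theorem~\ref{thm:enlarged}. The displayed set has $\sum_{t=1}^\kappa(p^{s_t}-p^{s_{t-1}})=p^{s_\kappa}-p^{s_0}=p^k-1$ elements, and a generating set of cardinality $\rk_{\bb Z}M$ in a finitely generated free abelian group $M$ is automatically a basis (the induced surjection $\bb Z^{\rk_\bb Z M}\to M$ is then an isomorphism). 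Hence it suffices to prove that the displayed set generates $\overline{\mc C_L}/\mu(K)$.

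Now $\overline{\mc C_L}/\mu(K)$ is generated over $\bb Z[\Gamma]$ by the classes of $\alpha_1,\dots,\alpha_k$, hence as an abelian group by $\{\alpha_i^{\sigma^j}:1\le i\le k,\ 0\le j<p^i\}$, since $\alpha_i\in L_i$. The defining relation $\alpha_i=\gamma_i^{1-\sigma^{\mu_i}}$ with $\gamma_i\in L_i^\times$ gives a telescoping identity: $\mu_i$ is a $p$-power dividing $p^{i-1}$ (writing $\max M_i=:j^\ast$ one has $t_{j^\ast}>p^{k-i}$ and $t_{j^\ast}n_{j^\ast}\mid p^k$, so $\mu_i=n_{j^\ast}\le p^{i-1}$), and since $\gamma_i^{\sigma^{p^i}}=\gamma_i$ we get for each $0\le r<\mu_i$
\begin{align*}
\prod_{l=0}^{p^i/\mu_i-1}\alpha_i^{\sigma^{r+l\mu_i}}=\gamma_i^{(1-\sigma^{\mu_i})\sum_{l}\sigma^{r+l\mu_i}}=\gamma_i^{\sigma^r(1-\sigma^{p^i})}=1\ .
\end{align*}
Thus every conjugate $\alpha_i^{\sigma^j}$ with $p^i-\mu_i\le j<p^i$ lies in the subgroup generated by conjugates with smaller exponent, so the conjugates of $\alpha_i$ are already generated by $\{\alpha_i^{\sigma^j}:0\le j<p^i-\mu_i\}$.

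Next I would show, by downward induction on $i$, that for a non-jump index $i<k$ (i.e.\ $\mu_i=\mu_{i+1}$) the element $\alpha_i$ lies in the $\bb Z[\Gamma]$-submodule generated by $\alpha_{i+1}$ together with the $\alpha_{i'}$, $i'<i$. When $M_i=M_{i+1}$ this is clean: then $y_i=y_{i+1}$ and $\eta_i=N_{L_{i+1}/L_i}(\eta_{i+1})$ by compatibility of norms, and since $\mu_i=\mu_{i+1}$ the subfield $L'$ relevant to Theorem~\ref{thm:rootofunit} is the same for $L_i$ and $L_{i+1}$, so $N_{L_{i+1}/L_i}(\alpha_{i+1})$ satisfies both conditions characterizing $\alpha_i$ for $L_i/K$; by the uniqueness there, $\alpha_i=N_{L_{i+1}/L_i}(\alpha_{i+1})\in\bb Z[\Gamma]\,\alpha_{i+1}$. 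When $M_i\subsetneq M_{i+1}$ each new prime $\mf p_j$, $j\in M_{i+1}\setminus M_i$, satisfies $t_j=p^{k-i}$ (so $L^{T_j}=L_i$ and $\sigma_j|_{L_i}=\sigma^{-c_jn_j}|_{L_i}$) and $n_j\le\mu_i$ (else $i$ would be a jump); iterating the norm relation \eqref{eq:normrel1} over $M_{i+1}\setminus M_i$, and using $y_{i+1}=y_i\prod_{j\in M_{i+1}\setminus M_i}(1-\sigma^{c_jn_j})$ together with the fact that $1-\sigma^{c_jn_j}$ and $1-\sigma^{n_j}$ are associates in $\bb Z[\Gamma]$, one finds that $\alpha_i$ is, up to a $\bb Z[\Gamma]$-unit exponent, a $\bb Z[\Gamma]$-combination of $N_{L_{i+1}/L_i}(\alpha_{i+1})$ and an element of $\overline{\mc C_L}\cap L_{i-1}^\times$, the latter absorbed inductively by the $\alpha_{i'}$, $i'<i$. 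After this collapse $\overline{\mc C_L}/\mu(K)$ is generated over $\bb Z[\Gamma]$ by the jump generators $\alpha_{s_1},\dots,\alpha_{s_\kappa}$, hence by the telescoping step as an abelian group by $\{\alpha_{s_t}^{\sigma^j}:1\le t\le\kappa,\ 0\le j<p^{s_t}-\mu_{s_t}\}$. It then remains to prove the combinatorial identity $\mu_{s_t}=p^{s_{t-1}}$: for $t=1$ the set $M_1$ consists of the primes totally ramified in $L$, all of which have trivial decomposition group, so $\mu_1=n_{\max M_1}=1=p^{s_0}$; in general $\mu$ is constant on $\{s_{t-1}+1,\dots,s_t\}$ by the definition of jumps, and $n_{\max M_{s_{t-1}+1}}=p^{s_{t-1}}$ follows from the ordering $n_1\le\dots\le n_s$, the relations $t_jn_j\mid p^k$, and the description $\max M_i=\max\{j:\mf p_j\text{ ramifies in }L_i\}$. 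Substituting $\mu_{s_t}=p^{s_{t-1}}$ yields exactly the displayed set, which therefore generates $\overline{\mc C_L}/\mu(K)$ and, by the reduction step, is a $\bb Z$-basis.

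The rank count and the telescoping are routine. The hard part will be the elimination of the non-jump generators in the case $M_i\subsetneq M_{i+1}$, where the interplay of $\eta_i=\alpha_i^{y_i}$, the norm relations \eqref{eq:normrel1}, the correction factors $c_j$, and the uniqueness in Theorem~\ref{thm:rootofunit} must be tracked with care, and the combinatorial identity $\mu_{s_t}=p^{s_{t-1}}$, which is precisely where the definition of ``jump'' via $\mu_i=n_{\max M_i}$ is used. Both points are handled by following \cite{chapdelaine2017annihilators} line by line.
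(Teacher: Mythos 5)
Your rank count in the first paragraph is correct, and the telescoping relations in the second paragraph are valid (modulo the inessential point that $\alpha_i=\gamma_i^{1-\sigma^{c_{\max M_i}\mu_i}}$, not $\gamma_i^{1-\sigma^{\mu_i}}$; since $p\nmid c_{\max M_i}$ this changes nothing). However, the crucial combinatorial identity $\mu_{s_t}=p^{s_{t-1}}$ on which your reduction rests is \emph{false} in general, and this breaks the proof. Here is a counterexample. Take $k=3$, $s=3$, and suppose the ramification data are $(t_1,n_1)=(p^3,1)$, $(t_2,n_2)=(p^2,1)$, $(t_3,n_3)=(p,p)$; this is consistent with the constraints $n_1\le n_2\le n_3$, $t_jn_j\mid p^k$, $t_1=p^k$, $n_1=1$. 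One computes $M_1=\{1\}$, $M_2=\{1,2\}$, $M_3=\{1,2,3\}$, hence $\mu_1=\mu_2=1$ and $\mu_3=p$. The jumps are therefore $s_0=0$, $s_1=2$, $s_2=3$. Your claimed identity would give $\mu_{s_2}=\mu_3=p^{s_1}=p^2$, but in fact $\mu_3=p$.

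The consequence is that the telescoping relations alone do not cut the conjugates of the jump generators down to the displayed set. After eliminating non-jump generators and applying telescoping, your argument produces the generating set $\bigcup_t\{\alpha_{s_t}^{\sigma^j}:0\le j<p^{s_t}-\mu_{s_t}\}$; since one always has $\mu_{s_t}=\mu_{s_{t-1}+1}\le p^{s_{t-1}}$, this set has $\sum_t(p^{s_t}-\mu_{s_t})\ge p^k-1$ elements, with strict inequality whenever some $\mu_{s_t}<p^{s_{t-1}}$ (as in the counterexample, where you get $p^3+p^2-p-1$ instead of $p^3-1$). So what you have at this stage is still only a generating set that is in general strictly larger than the displayed one, and the ``generating set of cardinality equal to the rank'' argument no longer applies. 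The additional relations needed to reach the displayed basis are of a different nature: they come from the norm relations expressing $N_{L_{s_t}/L_{s_t-1}}(\alpha_{s_t})$ as a $\bb Z[\Gamma]$-word in $\alpha_1,\dots,\alpha_{s_t-1}$ (equivalently, from the relation $s\bigl(\Gal(L_{s_t}/L_{s_t-1})\bigr)$ annihilating $\alpha_{s_t}$ modulo the submodule generated by lower $\alpha_i$), and it is this filtration-by-norms structure, not the $z_i$-relations alone, that produces the extra $p^{s_t-1}-\mu_{s_t}$ relations per jump. This is precisely what the cited argument in Greither--Ku\v{c}era does; your sketch replaces it by the false identity $\mu_{s_t}=p^{s_{t-1}}$, which is the gap.
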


\begin{proof}
See \cite[Lemma 5.1]{greither2015annihilators}.
\end{proof}

With this basis, we obtain our next result:

\begin{lem}[{cf.~\cite[Lemma 7.2]{chapdelaine2017annihilators}}]
Let $r$ be the highest jump less than $k$, i.e. $\mu_r<\mu_{r+1}=n_s$. Assume that $\rho\in \bb Z[\Gamma]$ is such that $\alpha_k^\rho\in \overline{\mc C_{L_r}}$. Then
\begin{align*}
(1-\sigma^{p^r})\rho=0\ .
\end{align*}
\end{lem}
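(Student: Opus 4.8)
The plan is to carry over the proof of \cite[Lemma~7.2]{chapdelaine2017annihilators} (which itself follows \cite[Lemma~5.2]{greither2015annihilators}); every ingredient it uses has by now been made available in the function field setting, so I only lay out the skeleton and single out the one point that really needs work.

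First I would reformulate the statement. Applying Lemma~\ref{lem:Zbasis} to $L$ and to $L_r$, and using that $\mu(L_r)=\mu(K)$ and that for indices $\leq r$ the data $M_i,n_i,\mu_i$ and the elliptic units $\alpha_i$ are the same whether formed inside $L$ or inside $L_r$ (note $n_j\mid p^r$ for every $j\in M_r$, since the hypothesis $\mu_r<\mu_{r+1}=n_s$ forces $s\in M_{r+1}$, whence $n_s\le p^r$), one sees that $\overline{\mc C_{L_r}}/\mu(K)$ is exactly the span in $\overline{\mc C_L}/\mu(K)$ of the first $\kappa-1$ blocks $\{\alpha_{s_t}^{\sigma^i}\mid 0\leq i<p^{s_t}-p^{s_{t-1}}\}$, $1\leq t\leq\kappa-1$, of the staircase basis (recall $r=s_{\kappa-1}$). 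Hence the quotient $A:=\overline{\mc C_L}/\bigl(\mu(K)\cdot\overline{\mc C_{L_r}}\bigr)$ is a free $\bb Z$-module of rank $p^k-p^r$ on the images of $\alpha_k^{\sigma^i}$ with $0\leq i<p^k-p^r$. Let $\psi\colon\bb Z[\Gamma]\lra A$ be the $\bb Z$-linear map $\rho\mapsto\alpha_k^{\rho}$; it is surjective, so $\ker\psi$ is free of rank $p^r$. A direct computation in $\bb Z[\Gamma]\cong\bb Z[x]/(x^{p^k}-1)$ gives $\Ann_{\bb Z[\Gamma]}(1-\sigma^{p^r})=s(\langle\sigma^{p^r}\rangle)\,\bb Z[\Gamma]$, a $\bb Z$-saturated submodule of rank $p^r$. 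Thus the lemma amounts to the equality $\ker\psi=s(\langle\sigma^{p^r}\rangle)\,\bb Z[\Gamma]$, and by the rank count it is enough to prove the inclusion $s(\langle\sigma^{p^r}\rangle)\,\bb Z[\Gamma]\ssq\ker\psi$.

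The heart of the argument — and the step I expect to be the main obstacle — is the single relation
\[
N_{L/L_r}(\alpha_k)=\alpha_k^{\,s(\langle\sigma^{p^r}\rangle)}\in\overline{\mc C_{L_r}},
\]
that is, $s(\langle\sigma^{p^r}\rangle)\in\ker\psi$. To obtain it I would chase the norm relations of Proposition~\ref{prop:normrel} down the tower $F_I\supseteq F_{M_r}\supseteq L_r$: starting from $\eta_k=N_{F_I/L}(\eta_I)$ one gets $N_{L/L_r}(\eta_k)=\eta_r^{\,\prod_{j\in I\setminus M_r}(1-\sigma_j^{-1})}$ with $\eta_r=N_{F_{M_r}/L_r}(\eta_{M_r})$, where $I\setminus M_r\neq\emptyset$ because $\mu_r<\mu_{r+1}$ forces $M_r\ssnq I$, and where every $\mf p_j$ with $j\in I\setminus M_r$ is unramified in $L_r/K$ (its inertia group in $L/K$ already lies in $\Gal(L/L_r)$); using the last fact each factor $1-\sigma_j^{-1}$ can be rewritten so as to display $N_{L/L_r}(\eta_k)$ as an element of $\overline{\mc C_{L_r}}$. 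One then upgrades this from $\eta_k$ to $\alpha_k$ via the presentations $\eta_k=\alpha_k^{y_k}$ and $\alpha_k=\gamma_k^{z_k}$ of Theorem~\ref{thm:rootofunit} together with the analogous data at level $r$, keeping careful track of the exponents $c_j$. This last bookkeeping is the only genuinely non-formal point — it is exactly the reason the $c_j$ were introduced — and is carried out in \cite{chapdelaine2017annihilators}, \cite{greither2015annihilators} with no change needed for function fields.

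Granting the relation, the rest is formal. The matrix of $\psi$ in the standard basis $\{\sigma^i\}$ of $\bb Z[\Gamma]$ and the basis of $A$ has the $(p^k-p^r)$-identity block in the columns $i=0,\dots,p^k-p^r-1$, while for $0\leq j<p^r$ the vanishing $\psi\bigl(\sigma^j s(\langle\sigma^{p^r}\rangle)\bigr)=0$ expresses $\psi(\sigma^{j+p^k-p^r})$ as a $\bb Z$-combination of those columns. Hence $\ker\psi$ is free with basis $\{\sigma^j s(\langle\sigma^{p^r}\rangle)\mid 0\leq j<p^r\}$, which is precisely a $\bb Z$-basis of $s(\langle\sigma^{p^r}\rangle)\,\bb Z[\Gamma]$. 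Therefore $\ker\psi=s(\langle\sigma^{p^r}\rangle)\,\bb Z[\Gamma]=\Ann_{\bb Z[\Gamma]}(1-\sigma^{p^r})$, and $\alpha_k^{\rho}\in\overline{\mc C_{L_r}}$ forces $\rho\in\ker\psi$, i.e.\ $(1-\sigma^{p^r})\rho=0$.
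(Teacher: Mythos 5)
Your proof takes essentially the same approach as the argument in the cited Lemma 7.2 of Chapdelaine--Ku\v{c}era (and its source, Lemma 5.2 of Greither--Ku\v{c}era), which is exactly what the paper's proof defers to, and it looks correct to me. One small slip: the parenthetical claim that $\mu_r<\mu_{r+1}=n_s$ forces $s\in M_{r+1}$ need not hold (one can have $n_{\max M_{r+1}}=n_s$ with $\max M_{r+1}<s$), but the bound you actually need, $n_j\mid p^r$ for all $j\in M_r$, is immediate from $t_j>p^{k-r}$ together with $t_jn_j\mid p^k$, so nothing breaks.
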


\begin{proof}
See \cite[Lemma 7.2]{chapdelaine2017annihilators}.
\end{proof}

Now we need an additional condition on the $p$-power $m$. We already know that $(m,q)=1$, since $p\nmid q$, so $q\in (\bb Z/m\bb Z)^\times$. Let $d$ denote the order of $q$ in $(\bb Z/m\bb Z)^\times$, then there exists $i\geq 0$ and $b\in \bb Z$ with $p\nmid b$ such that
\begin{align*}
q^d-1=b\cdot p^im.
\end{align*}
If we define $m':=p^im$, we still have $p^{ks}\mid m'$ and $d$ is the order of $q$ modulo $m'$, so we can assume without loss of generality that $i=0$. Now we can define $f$ to be the order of $q$ in $(\bb Z/m^2\bb Z)^\times$. Then an easy computation shows:

\begin{lem}\label{lem:choiceofm}
We have $m\mid \frac{f}{d}$.
\end{lem}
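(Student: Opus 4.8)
The plan is to identify $f/d$ with the multiplicative order of $q^d$ modulo $m^2$ and then compute that order explicitly via the binomial theorem. First I would note that reduction modulo $m$ gives a group homomorphism $(\bb Z/m^2\bb Z)^\times\to(\bb Z/m\bb Z)^\times$ under which $q\bmod m^2$ maps to $q\bmod m$; hence the order $d$ of the latter divides the order $f$ of the former, and writing $f=d\cdot e$ one checks in the usual way that $e$ is exactly the order of $q^d$ in $(\bb Z/m^2\bb Z)^\times$. Indeed $(q^d)^e=q^f\equiv 1\pmod{m^2}$, and if $(q^d)^j\equiv 1\pmod{m^2}$ then $f\mid dj$, so $e\mid j$.

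Next I would invoke the hypothesis. After the normalization carried out just before the statement (replacing $m$ by $p^i m$ so that one may assume $i=0$), we have $q^d=1+bm$ with $p\nmid b$; since $m$ is a power of $p$ this says $\gcd(b,m)=1$. For any $j\geq 1$ the binomial theorem gives
\[
(q^d)^j=(1+bm)^j=1+jbm+\sum_{i\geq 2}\binom{j}{i}(bm)^i\equiv 1+jbm\pmod{m^2},
\]
since every summand with $i\geq 2$ is divisible by $m^2$. Therefore $(q^d)^j\equiv 1\pmod{m^2}$ if and only if $m\mid jb$, which, because $\gcd(b,m)=1$, holds if and only if $m\mid j$. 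Consequently the order of $q^d$ modulo $m^2$ equals $m$, i.e. $f/d=e=m$, and in particular $m\mid f/d$.

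I do not anticipate any genuine difficulty in this lemma: once the reduction to $i=0$ guarantees that $b$ is prime to $p$, everything reduces to the elementary estimate $(1+bm)^j\equiv 1+jbm\pmod{m^2}$. It may be worth remarking that the hypothesis that $p$ is odd is not used here — the computation modulo $m^2$ is valid for any prime power $m$ — in contrast to several of the other arguments in this section.
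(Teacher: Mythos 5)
Your proof is correct and supplies precisely the ``easy computation'' the paper alludes to without spelling out: identify $f/d$ with the order of $q^d$ modulo $m^2$, then use the normalization $q^d=1+bm$ with $\gcd(b,m)=1$ together with the binomial expansion $(1+bm)^j\equiv 1+jbm\pmod{m^2}$ to conclude that this order is exactly $m$. In fact you establish the stronger statement $f/d=m$, which of course implies the divisibility claimed in the lemma.
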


\begin{thm}\label{thm:A}
Let $m$ be a power of $p$ such that $m\mid \frac{f}{d}$ and $V\ssq L^\times/m$ a finitely generated $\bb Z_p[\Gamma]$-submodule. Without loss of generality we can choose representatives of generators of $V$ which belong to $\mc O_L$. Suppose there is a map $z:V\lra (\bb Z/m\bb Z)[\Gamma]$ of $\bb Z_p[\Gamma]$-modules such that $z(V\cap K^\times)=0$, where $V\cap K^\times$ means $V\cap (K^\times (L^\times)^m/(L^\times)^m)$. Then for any $\mf c\in Cl(\mc O_L)_p$, there exist infinitely many primes $\mf Q$ in $L$ such that:
\begin{enumerate}[label=(\roman*)]
\item $\mf q:=\mf Q\cap K$ is completely split in $L/K$,

\item $[\mf Q]=\mf c$, where $[\mf Q]$ is the projection of the ideal class of $\mf Q$ into $Cl(L)_p$,

\item $Q:=\abs{\mc O_L/\mf Q}\equiv 1+m\mod m^2$,

\item for each $j=1,...,s$, the class of $x_j$ is an $m$-th power in $(\mc O_K/\mf q)^\times$,

\item no prime above $\mf q$ is contained in the support of the generators of $V$ and there is a $\bb Z_p[\Gamma]$-linear map $\varphi:(\mc O_L/\mf q\mc O_L)^\times/m\lra (\bb Z/m\bb Z)[\Gamma]$ such that the diagram
\[\begin{tikzcd}
V\arrow{r}{z}\arrow{d}{\psi} & (\bb Z/m\bb Z)[\Gamma]\\
(\mc O_L/\mf q\mc O_L)^\times/m\arrow{ru}{\varphi} & 
\end{tikzcd}\]
commutes, where $\psi$ corresponds to the reduction map.

\end{enumerate}
\end{thm}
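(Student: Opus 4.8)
**The plan is to adapt the Chebotarev-with-congruence-conditions argument of Greither–Kučera and Chapdelaine–Kučera to the function field setting.** The statement is a "universal" prime-producing device: given a class $\mf c\in Cl(\mathcal O_L)_p$ and a compatible family of data, we must find infinitely many primes $\mf Q$ of $L$ whose Frobenius behaviour realizes $\mf c$, which are split over $K$, which satisfy the norm-congruence condition (iii), the $m$-th power condition (iv) on the $x_j$, and for which the reduction map on $V$ factors through a $\bb Z_p[\Gamma]$-linear $\varphi$ as in (v). First I would fix the Hilbert class field tower: let $H_L$ be the Hilbert class field of $L$, and more generally work inside the compositum $N$ of $L[\mf 1]$-type fields — concretely, the field cut out by the data is built from $H_L$, the constant field extension $K(\zeta_{m^2})$ (needed to control $Q \bmod m^2$), the Kummer-type extension $L(\sqrt[m]{v} : v \text{ a generator of } V)$ needed to read off the map $z$, and the field $K[\mf q]$-controlling extensions (to get condition (iv), i.e. $(\mf p_j, K[\mf q]/K)=1$, which as computed just before Definition \ref{defn:semispecial} is equivalent to condition (iii) there). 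The key is to choose a single automorphism $\tau$ of this large field whose restrictions simultaneously encode all the conditions, and then apply the function-field Chebotarev density theorem (\cite[Thm.~9.13B]{rosen2013number}, already used in Lemma \ref{lem:22}) to produce infinitely many $\mf Q$ with Frobenius $\tau$.

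Concretely, the key steps in order would be: (1) \textbf{Set up the Kummer extension encoding $z$.} Write $V = \langle v_1,\dots,v_n\rangle$ with $v_i\in\mathcal O_L$; the hypothesis $z(V\cap K^\times)=0$ together with $\bb Z_p[\Gamma]$-linearity is exactly what is needed so that the map $v\mapsto z(v)$ can be recovered from Frobenius data in $L(\zeta_m, V^{1/m})/L$ — this is the function field analogue of the Kummer duality used in \cite{greither2015annihilators}, \cite{chapdelaine2017annihilators}, and the constant field extension $K(\zeta_m)$ plays the role the cyclotomic field plays over $\bb Q$. (2) \textbf{Linear disjointness.} Check that $H_L$, the Kummer extension, the constant field extension $K(\zeta_{m^2})$, and the field controlling (iv) are "sufficiently independent" over $L$ — i.e. compute the relevant Galois group as a fibre product — so that one can prescribe the restriction of $\tau$ to each factor independently. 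Here one uses $p\nmid h$, $p\nmid w_K = q-1$, $p\nmid q$, and the condition $m\mid f/d$ from Lemma \ref{lem:choiceofm} (this last is precisely what guarantees that $K(\zeta_{m^2})/K(\zeta_m)$ is not "used up" by the constant field extension generated by $\zeta_m$, so that $Q\equiv 1+m \bmod m^2$ — rather than merely $Q\equiv 1\bmod m$ — can be imposed). (3) \textbf{Choose $\tau$.} On $H_L$ let $\tau$ restrict to the Artin symbol of an ideal in the class $\mf c$ (this gives (ii)); on $L$ itself let $\tau|_L = 1$ (this gives (i), $\mf q$ split in $L/K$); on $K(\zeta_{m^2})$ choose $\tau$ to act as the automorphism sending $\zeta_{m^2}\mapsto \zeta_{m^2}^{1+m}$ (giving (iii)); on the field controlling (iv) let $\tau$ act trivially; and on the Kummer extension let $\tau$ act by the prescribed character determined by $z$. (4) \textbf{Apply Chebotarev} to get infinitely many $\mf Q$ with $(\mf Q, N/L) = \tau$; (5) \textbf{Verify the conditions} one by one: (i)–(iv) are immediate translations of the restrictions of $\tau$, while for (v) one defines $\varphi$ on $(\mathcal O_L/\mf q\mathcal O_L)^\times/m$ using the cyclic structure of that group (order divisible by $m$ since $Q\equiv 1\bmod m$) and the splitting behaviour of $\mf q$, and checks that $\varphi\circ\psi = z$ by comparing how Frobenius acts on $m$-th roots.

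\textbf{The hard part will be step (2), the linear disjointness / fibre-product computation.} In the number field case one leans heavily on the cyclotomic fields $\bb Q(\zeta_m)$ being totally ramified at the relevant primes and disjoint from everything unramified there; in the function field case the role of "the $m$-th cyclotomic field" is played by the constant field extension $K(\zeta_m) = \bb F_{q^d}\cdot K$, which is \emph{unramified everywhere}, so the usual ramification-based disjointness arguments fail and one must instead argue via degrees and via the distinctness of residue field sizes. This is exactly the subtlety that forced the careful choice of $m$ in Lemma \ref{lem:choiceofm}, and exactly the point where the proof genuinely diverges from \cite{chapdelaine2017annihilators} rather than being a verbatim transcription. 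A secondary technical point is ensuring that the Kummer extension $L(\zeta_m, V^{1/m})/L(\zeta_m)$ has the expected Galois group, i.e. that the classes of the $v_i$ remain independent modulo $m$-th powers after adjoining $\zeta_m$ — but this follows from $p\nmid w_K$ together with $K(\zeta_m)/K$ being a constant field extension (hence contributing no new $m$-th power relations among elements of $L^\times$), by the same reasoning as in the proof of Lemma \ref{lem:22} where it is shown that $x$ fails to become a $p$-th power in $K_m$. Once linear disjointness is in hand, the remaining verifications are routine translations of the arguments in \cite{greither2015annihilators} and \cite{chapdelaine2017annihilators}.
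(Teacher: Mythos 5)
Your proposal matches the paper's approach: the paper likewise defers to \cite[Thm.~17]{greither2004annihilators}, applying \v{C}ebotarev to a compositum built from the Hilbert class field, a Kummer extension encoding $z$, the constant field extension $L\bb F_{q^f}$ (which contains $\zeta_{m^2}$) in place of $\bb Q(\zeta_{m^2})$, and extensions controlling condition (iv), with the hypothesis $m\mid f/d$ providing the needed order-$m$ element in $\Gal(L\bb F_{q^f}/L\bb F_{q^d})$. The one modification the paper explicitly flags that your disjointness step does not mention is the analogue of \cite[Lemma 18(iii)]{greither2004annihilators}: because $L$ is real, one must control the splitting field of $\infty$ inside $L\bb F_{q^f}$ (the subextension of degree $\gcd(f,d_\infty)$), and this works out only because $d_\infty\mid h$ and $p\nmid h$ force $p\nmid d_\infty$ --- a function-field wrinkle worth isolating alongside the $m\mid f/d$ condition you already identified.
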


\begin{bem}
The reduction map $\psi$ is defined on the chosen set of generators: Let $x\in \mc O_L$ be a representative of such a generator, then $\overline{x}$ is the class of $x\in \mc O_L/\mf q\mc O_L$. Since no prime above $\mf q$ is contained in the support of $x$, we get $\overline{x}\in (\mc O_L/\mf q\mc O_L)^\times$. Hence we can set $\psi(x)$ to be the class of $\overline{x}$ in $(\mc O_L/\mf q\mc O_L)^\times/m$. This yields a well-defined $\bb Z_p[\Gamma]$-homomorphism.
\end{bem}

\begin{proof}
The proof is essentially the same as in \cite[Thm.~17]{greither2004annihilators}. We only point out some changes which are necessary in the function field case:
\begin{itemize}
\item In the proof of \cite[Lemma 18(i) and (ii)]{greither2004annihilators}, we only get an isomorphism to a subgroup $H$ of $(\bb Z/m^2\bb Z)^\times$ of order $f$. Nevertheless, we can choose $q$ as a generator of $H$ and hence prove the vanishing of the coinvariants since $p\nmid q-1$.

\item For part (iii) of \cite[Lemma 18]{greither2004annihilators} we consider the splitting field of $\infty$ in $L\bb F_{q^f}$ which is the unique subextension of degree $(f,d_\infty)$ (see \cite[Prop.~8.13]{rosen2013number}). Then the claim follows since $d_\infty\mid h$ and $p\nmid h$, so $p\nmid d_\infty$.

\item In the last step of the construction of the element $\tau$ which is used for \v{C}ebotarev's Density Theorem (cf.~\cite[Thm.~9.13A]{rosen2013number}), we need an element of order $m$ in $\Gal(L\bb F_{q^f}/L\bb F_{q^d})$. At this point we need $m\mid \frac{f}{d}$ from Lemma \ref{lem:choiceofm}.

\item Condition (iii) uses that $\zeta_{m^2}$ is an element of the constant field of $L\bb F_{q^f}$.
\end{itemize}
\end{proof}

For the desired annihilation result, we need the next

\begin{thm}[{cf.~\cite[Thm.~(5.1)]{rubin1987global}}]\label{thm:B}
Let $\mf q$ be a prime of $K$ which splits completely in $L$, set $Q:=\abs{\mc O_K/\mf q}$. Let $M$ be a finite extension of $L$ which is abelian over $K$ and such that in $M/L$, all primes above $\mf q$ are totally tamely ramified and no other primes ramify. Write $\mf q_M$ for the product of all primes of $M$ above $\mf q$ and let $\mc A$ denote the annihilator in $(\bb Z/(Q-1)\bb Z)[\Gamma]$ of the cokernel of the reduction map
\begin{align*}
\{\varepsilon\in \mc O_M^\times\mid N_{M/L}(\varepsilon)=1\}\lra (\mc O_M/\mf q_M)^\times\ .
\end{align*}
Write $w:=\frac{Q-1}{[M:L]}$. Then $\mc A\ssq w(\bb Z/(Q-1)\bb Z)[\Gamma]$ and for every prime $\mf Q$ of $L$ above $\mf q$, $w^{-1}\mc A$ annihilates the ideal class of $\mf Q$ in $Cl(\mc O_L)/[M:L]$.
\end{thm}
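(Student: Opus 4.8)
The plan is to follow the argument of \cite[Thm.~(5.1)]{rubin1987global}; this is a statement about the tower $M/L/K$ and the prime $\mf q$ alone, and I expect it to transcribe to function fields without any essential change. Put $n:=[M:L]$, $\bar R:=(\bb Z/(Q-1)\bb Z)[\Gamma]$, let $E:=\{\varepsilon\in\mc O_M^\times\mid N_{M/L}(\varepsilon)=1\}$, let $U:=(\mc O_M/\mf q_M)^\times$, and write $r\colon E\to U$ for the reduction map. The strategy is to describe the two $\bar R$-modules $U$ and $r(E)$ explicitly in terms of the ideal class group $Cl(\mc O_M)$, and then to read off both assertions.

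First I would pin down the structure of $U$. Since $\mf q$ splits completely in $L/K$, the primes of $\mc O_L$ above $\mf q$ form a $\Gamma$-torsor, each with residue field $\bb F_Q$; since each of them is totally tamely ramified in $M$, the primes $\widetilde{\mf Q}_\gamma$ ($\gamma\in\Gamma$) of $\mc O_M$ above $\mf q$ are indexed by the same torsor, still with residue field $\bb F_Q$, and the inertia (= decomposition) group of any $\widetilde{\mf Q}_\gamma$ over $\mf q$ is all of $\Gal(M/L)$. Tameness then forces $\Gal(M/L)$ to be cyclic of order $n$, say generated by $\tau$, and forces $n\mid Q-1$. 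Hence $U\cong\bb F_Q^\times\otimes_{\bb Z}\bb Z[\Gamma]\cong\bar R$ is free of rank one over $\bar R$: $\Gamma$ fixes $\bb F_Q=\mc O_K/\mf q$ and permutes the $\widetilde{\mf Q}_\gamma$ simply transitively, and a choice of generator $g$ of $\bb F_Q^\times$ and of a base prime fixes a free generator $\xi$.

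The heart of the proof is the description of $r(E)$. By Hilbert~90 for the cyclic extension $M/L$, any $\varepsilon\in E$ is of the form $\tau(y)/y$ with $y\in M^\times$, and $\varepsilon\in\mc O_M^\times$ forces the divisor of $y$ to be $\tau$-stable; away from $\mf q$ this divisor is extended from $\mc O_L$, while at $\widetilde{\mf Q}_\gamma$ only the class modulo $\widetilde{\mf Q}_\gamma^{\,n}=\mf Q_\gamma\mc O_M$ is constrained, so $y\mc O_M=\mf c\mc O_M\cdot\prod_\gamma\widetilde{\mf Q}_\gamma^{a_\gamma}$ for some ideal $\mf c$ of $\mc O_L$ and $0\le a_\gamma<n$ with $a_\gamma\equiv v_{\widetilde{\mf Q}_\gamma}(y)\bmod n$. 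A local computation at $\widetilde{\mf Q}_\gamma$, using that a uniformizer $\pi$ satisfies $\tau(\pi)/\pi\equiv\zeta_n\bmod\widetilde{\mf Q}_\gamma$ for a fixed primitive root $\zeta_n\in\mu_n(\bb F_Q)$ (independent of $\gamma$ because $M/K$ is abelian) and that $\tau$ acts trivially on residue fields, shows that the $\widetilde{\mf Q}_\gamma$-component of $r(\varepsilon)$ equals $\zeta_n^{a_\gamma}$. In particular $r(E)\subseteq U[n]$, and choosing $g$ so that $g^w=\zeta_n$ identifies $U[n]=w\bar R$ with $(\bb Z/n\bb Z)[\Gamma]$ (via $wc\leftrightarrow c\bmod n$) in such a way that $r(\varepsilon)$ corresponds to $\sum_\gamma a_\gamma e_\gamma$. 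Since $y\mc O_M$ is principal, $\sum_\gamma a_\gamma[\widetilde{\mf Q}_\gamma]=-\iota([\mf c])$ in $Cl(\mc O_M)$, where $\iota\colon Cl(\mc O_L)\to Cl(\mc O_M)$ is extension of ideals; so, defining the $\bar R$-linear map $\Phi\colon(\bb Z/n\bb Z)[\Gamma]\to Cl(\mc O_M)/\iota(Cl(\mc O_L))$ by $e_\gamma\mapsto[\widetilde{\mf Q}_\gamma]$, we get $r(E)\subseteq\ker\Phi$ (and in fact equality, though only the inclusion is needed below).

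Both conclusions then follow formally. As $\mc A$ annihilates $U/r(E)$ and $U\cong\bar R$ is free of rank one, every $\rho\in\mc A$ satisfies $\rho\bar R=\rho U\subseteq r(E)\subseteq w\bar R$, hence $\rho\in w\bar R$, which is the first assertion. Write $\rho=w\rho'$; via $w\bar R\cong(\bb Z/n\bb Z)[\Gamma]$ this $\rho'$ is well defined in $(\bb Z/n\bb Z)[\Gamma]$, i.e. $\rho'\in w^{-1}\mc A$. The inclusion $\rho U\subseteq r(E)$ reads $\rho'\cdot(\bb Z/n\bb Z)[\Gamma]\subseteq\ker\Phi$, so in particular $\Phi(\rho')=0$, that is $\rho'\cdot[\widetilde{\mf Q}_1]\in\iota(Cl(\mc O_L))$ (the $\Gamma$-action on the subgroup $\langle[\widetilde{\mf Q}_\gamma]\rangle\ssq Cl(\mc O_M)$ is well defined because $\tau$ fixes each $\widetilde{\mf Q}_\gamma$). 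Applying $N_{M/L}$, which is $\Gamma$-equivariant and satisfies $N_{M/L}\circ\iota=n$ and $N_{M/L}(\widetilde{\mf Q}_\gamma)=\mf Q_\gamma$ (inertia degree $1$), gives $\rho'\cdot[\mf Q_1]\in n\,Cl(\mc O_L)$, i.e. $\rho'$ annihilates $[\mf Q_1]$ in $Cl(\mc O_L)/n$; since $\mf Q_1$ was an arbitrary prime of $L$ above $\mf q$, this is the second assertion. The step I expect to be the main obstacle is the componentwise identification of $r(\varepsilon)$ in the third paragraph: the interplay between $\tau$-stable divisors of $\mc O_M$, the relation $\widetilde{\mf Q}_\gamma^{\,n}=\mf Q_\gamma\mc O_M$, and the quotient by $\iota(Cl(\mc O_L))$ has to be tracked with care. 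Everything else is formal, and nothing in the argument is sensitive to the function-field setting — finite residue fields, Hilbert~90, the structure of tamely ramified local extensions, and the norm relations for extensions of Dedekind domains all hold verbatim — so the proof is that of \cite[Thm.~(5.1)]{rubin1987global} with no changes.
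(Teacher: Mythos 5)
The proposal is correct and reconstructs the argument of Rubin's Theorem~5.1 (the only "proof" the paper offers is a citation to that result); all the ingredients — the identification of $(\mc O_M/\mf q_M)^\times$ as a free rank-one $(\bb Z/(Q-1)\bb Z)[\Gamma]$-module, Hilbert~90 for the cyclic tame extension $M/L$, the principal-ideal computation landing in $Cl(\mc O_M)/\iota(Cl(\mc O_L))$, and the norm $N_{M/L}$ to descend to $Cl(\mc O_L)/n$ — transfer verbatim to function fields, as you note. So this is the same approach as the paper's, just written out.
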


\begin{proof}
The proof of \cite[Thm.~(5.1)]{rubin1987global} also works for function fields.
\end{proof}

The above theorems are the main ingredients for proving

\begin{thm}\label{thm:C}
Let $m$ be a power of $p$ divisible by $p^{ks}$ such that $m\mid \frac{f}{d}$. Assume that $\varepsilon\in \mc O_L$ is $m$-semispecial and let $V\ssq L^\times/m$ be a finitely generated $\bb Z[\Gamma]$-module. Suppose that the class of $\varepsilon$ belongs to $V$. Let $z:V\lra (\bb Z/m\bb Z)[\Gamma]$ be a $\bb Z[\Gamma]$-linear map such that $z(V\cap K^\times)=0$. Then $z(\varepsilon)$ annihilates $Cl(\mc O_L)_p/(m/p^{k(s-1)})$.
\end{thm}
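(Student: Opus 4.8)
```latex
\textbf{Proof plan.} The strategy is to combine the $m$-semispecial property of $\varepsilon$ with the abstract annihilation machinery of Theorem~\ref{thm:A} and Theorem~\ref{thm:B}, exactly as in the proof of \cite[Thm.~7.5]{chapdelaine2017annihilators}. First I would fix an arbitrary class $\mf c\in Cl(\mc O_L)_p$ and apply Theorem~\ref{thm:A} to the module $V$ and the map $z$ (which satisfy its hypotheses by assumption): this produces infinitely many primes $\mf Q$ of $L$, with $\mf q:=\mf Q\cap K$ completely split in $L/K$, satisfying conditions (i)--(v) of that theorem. Conditions (iii) and (iv) guarantee that $\mf q\in \mc Q_m$ (using that $Q=\abs{\mc O_L/\mf Q}=\abs{\mc O_K/\mf q}$ since $\mf q$ splits completely), so the $m$-semispecial hypothesis on $\varepsilon$ applies to $\mf q$ after discarding the finitely many bad primes; since Theorem~\ref{thm:A} gives infinitely many $\mf Q$, we may assume $\mf q$ is one of the good ones. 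Let $\varepsilon_\mf q\in \mc O_{L[\mf q]}^\times$ be the corresponding unit with $N_{L[\mf q]/L}(\varepsilon_\mf q)=1$ and with the same image as $\varepsilon$ in $(\mc O_{L[\mf q]}/\mf q_{L[\mf q]})^\times/(m/p^{k(s-1)})$.

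Next I would apply Theorem~\ref{thm:B} with $M:=L[\mf q]$: here $L[\mf q]/L$ is totally tamely ramified at the primes above $\mf q$ and unramified elsewhere (as noted after Definition~\ref{defn:semispecial}, using that $\mf q$ splits completely in $L$ and is totally tamely ramified in $K[\mf q]/K$), and $[M:L]=m$. Theorem~\ref{thm:B} then tells us that if $a\in (\bb Z/(Q-1)\bb Z)[\Gamma]$ annihilates the cokernel of the reduction map from norm-$1$ units of $L[\mf q]$ to $(\mc O_{L[\mf q]}/\mf q_{L[\mf q]})^\times$, then $a$ lies in $w(\bb Z/(Q-1)\bb Z)[\Gamma]$ with $w=(Q-1)/m$, and $w^{-1}a$ annihilates the class of $\mf Q$ in $Cl(\mc O_L)/m$. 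The point now is to produce such an $a$ from $z(\varepsilon)$: the unit $\varepsilon_\mf q$ reduces to the same class as $\varepsilon$ modulo the relevant quotient, and via the map $\varphi$ from Theorem~\ref{thm:A}(v) we have $\varphi(\psi(\varepsilon))=z(\varepsilon)$; pushing $\varepsilon_\mf q$ through the reduction map and the induced map to $(\bb Z/m\bb Z)[\Gamma]$, we conclude that $z(\varepsilon)$ (or rather a lift of it, multiplied by $w$) is realized by a norm-$1$ unit and hence lies in the annihilator $\mc A$ of Theorem~\ref{thm:B} after reduction mod $m$.

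The bookkeeping of congruence moduli is where the factor $m/p^{k(s-1)}$ enters and is the step I expect to require the most care. Condition (ii) of Definition~\ref{defn:semispecial} only gives agreement of $\varepsilon$ and $\varepsilon_\mf q$ modulo $m/p^{k(s-1)}$, not modulo $m$, so the element of $\mc A$ we extract is only well-defined up to this loss of precision; correspondingly, $w^{-1}\mc A$ annihilates the class of $\mf Q$ only in $Cl(\mc O_L)/(m/p^{k(s-1)})$ rather than $Cl(\mc O_L)/m$. Tracking this through Theorem~\ref{thm:B}, and matching $(\bb Z/(Q-1)\bb Z)[\Gamma]$-coefficients against $(\bb Z/m\bb Z)[\Gamma]$-coefficients via $Q\equiv 1+m \bmod m^2$ (so that $(Q-1)/m$ is a unit mod $m$), shows that $z(\varepsilon)$ itself annihilates $[\mf Q]$ in $Cl(\mc O_L)_p/(m/p^{k(s-1)})$.

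Finally, since $\mf c\in Cl(\mc O_L)_p$ was arbitrary and $[\mf Q]$ can be chosen equal to $\mf c$ by Theorem~\ref{thm:A}(ii), it follows that $z(\varepsilon)$ annihilates all of $Cl(\mc O_L)_p/(m/p^{k(s-1)})$, which is the assertion. The main obstacle is thus not any single deep input but the careful reconciliation of the several $p$-power moduli ($m$, $m^2$, $Q-1$, and $m/p^{k(s-1)}$) so that the annihilator statement comes out with exactly the stated modulus; everything else is a direct transcription of the number-field argument, whose function-field analogues (\v{C}ebotarev, Rubin's Theorem~\ref{thm:B}, the structure of $\mc Q_m$) have already been established above.
```
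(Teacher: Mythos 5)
Your proposal is correct in approach and matches what the paper does: the paper's proof of this statement is simply a citation of \cite[Thm.~12]{greither2004annihilators}, and your outline is a faithful summary of that argument (fix $\mf c$, apply Theorem~\ref{thm:A} to select a suitable $\mf q\in\mc Q_m$ with $[\mf Q]=\mf c$ and a compatible $\varphi$, feed the norm-$1$ unit $\varepsilon_\mf q$ from semispeciality into Theorem~\ref{thm:B} with $M=L[\mf q]$, and reconcile moduli using $Q\equiv 1+m\bmod m^2$). One small imprecision in your wording: it is not that ``$w^{-1}\mc A$ annihilates the class of $\mf Q$ only in $Cl(\mc O_L)/(m/p^{k(s-1)})$'' --- Theorem~\ref{thm:B} as stated gives annihilation of $[\mf Q]$ in $Cl(\mc O_L)/m$ by all of $w^{-1}\mc A$. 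Rather, the $p^{k(s-1)}$ loss enters one step earlier: because $\varepsilon$ and $\varepsilon_\mf q$ agree only modulo $(m/p^{k(s-1)})$-th powers, you can only place $p^{k(s-1)}\cdot z(\varepsilon)$ (suitably scaled by $w$) into $\mc A$, so what you deduce is that $p^{k(s-1)} z(\varepsilon)$ kills $[\mf Q]$ in $Cl(\mc O_L)/m$, which is equivalent to $z(\varepsilon)$ killing it in $Cl(\mc O_L)/(m/p^{k(s-1)})$. With that correction, the bookkeeping comes out as you predict, and the proof is complete.
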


\begin{proof}
See \cite[Thm.~12]{greither2004annihilators}.
\end{proof}

The main result of this article is Theorem C:

\begin{thm}\label{thm:main}
Let $r$ be the highest jump less than $k$. Then we have
\begin{align*}
\Ann_{\bb Z[\Gamma]}((\mc O_L^\times/\overline{\mc C_L})_p)\ssq \Ann_{\bb Z[\Gamma]}((1-\sigma^{p^r})Cl(\mc O_L)_p)\ .
\end{align*}
The number $r$ is determined by $p^{k-r}=\max\{t_j\mid j\in J\}$, where $J=\{j\in \{1,...,s\}\mid n_j=n_s\}$.
\end{thm}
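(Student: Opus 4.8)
The plan is to deduce the inclusion from the annihilation theorem for semispecial numbers (Theorem~\ref{thm:C}), applied to the semispecial elliptic unit $\alpha=\alpha_k$ obtained from Theorem~\ref{thm:rootofunit}. Fix $\rho\in\Ann_{\bb Z[\Gamma]}((\mc O_L^\times/\overline{\mc C_L})_p)$; it suffices to prove that $(1-\sigma^{p^r})\rho$ annihilates $Cl(\mc O_L)_p$, that is, that $\rho$ annihilates $(1-\sigma^{p^r})Cl(\mc O_L)_p$. I would do this by exhibiting a finitely generated $\bb Z[\Gamma]$-module $V\ssq L^\times/m$ containing the class of $\alpha$ together with a $\bb Z[\Gamma]$-linear map $z\colon V\to(\bb Z/m\bb Z)[\Gamma]$ such that $z(V\cap K^\times)=0$ and $z(\alpha)\equiv(1-\sigma^{p^r})\rho\pmod m$; Theorem~\ref{thm:C} then gives the claim, provided $m$ is chosen large enough.

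First I would fix $m$: a power of $p$ divisible by $p^{ks}$, satisfying $m\mid f/d$ (after the normalisation preceding Lemma~\ref{lem:choiceofm}), and large enough that $m/p^{k(s-1)}$ kills $Cl(\mc O_L)_p$. By Theorem~\ref{thm:19} the element $\alpha:=\alpha_k\in\mc C_{F_I}\cap L\ssq\mc O_L^\times$ is $m$-semispecial. Next I would extract the module-theoretic input from Lemma~\ref{lem:Zbasis}: with $0=s_0<\dots<s_\kappa=k$ the sequence of jumps, so that $r=s_{\kappa-1}$, the subgroup $\overline{\mc C_{L_r}}/\mu(K)$ is a $\Gamma$-stable $\bb Z$-direct summand of $\overline{\mc C_L}/\mu(K)$, the complementary basis vectors being the $\Gamma$-translates $\alpha_k^{\sigma^i}$, $0\le i<p^k-p^r$. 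Hence $N:=\overline{\mc C_L}/\overline{\mc C_{L_r}}$ is $\bb Z$-free of rank $p^k-p^r$ and is generated over $\bb Z[\Gamma]$ by the class of $\alpha_k$; the lemma following Lemma~\ref{lem:Zbasis} says that the annihilator of that class is contained in $\Ann_{\bb Z[\Gamma]}(1-\sigma^{p^r})$, and a rank count forces equality. So there is a $\bb Z[\Gamma]$-isomorphism $N\otimes\bb Z_p\cong(1-\sigma^{p^r})\bb Z_p[\Gamma]$ carrying the class of $\alpha_k$ to $1-\sigma^{p^r}$.

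Now I would build $z$. Since $\overline{\mc C_L}$ has finite index in $\mc O_L^\times$ (Theorem~\ref{thm:enlarged}), the hypothesis on $\rho$ says exactly that multiplication by $\rho$ sends $\mc O_L^\times\otimes\bb Z_p$ into $\overline{\mc C_L}\otimes\bb Z_p$. Composing this with the canonical surjection onto $N\otimes\bb Z_p$, with the isomorphism $N\otimes\bb Z_p\cong(1-\sigma^{p^r})\bb Z_p[\Gamma]$ above, and reducing modulo $m$, I obtain a $\bb Z[\Gamma]$-linear map from the image of $\mc O_L^\times$ in $L^\times/m$ to $(\bb Z/m\bb Z)[\Gamma]$. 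Tracing the class of $\alpha_k$ through this composite and using that $1-\sigma^{p^r}$ annihilates the relevant ideal (so the value does not depend on the chosen lift) gives $z(\alpha_k)\equiv(1-\sigma^{p^r})\rho\pmod m$, and moreover $\im(z)\ssq(1-\sigma^{p^r})(\bb Z/m\bb Z)[\Gamma]$. Taking $V$ to be the image of $\mc O_L^\times$ in $L^\times/m$ (possibly enlarged by the element $\gamma$ of Theorem~\ref{thm:rootofunit}, which is harmless), one has $\alpha\in V$, and the vanishing $z(V\cap K^\times)=0$ is verified exactly as in \cite{greither2004annihilators}, using $p\nmid h$ and $p\nmid w_K$ (so that $(L^\times/m)^\Gamma=K^\times/m$ and the relevant classes are controlled by norms from $L$ to $K$) together with $\im(z)\ssq(1-\sigma^{p^r})(\bb Z/m\bb Z)[\Gamma]$. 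Then Theorem~\ref{thm:C} yields that $z(\alpha)=(1-\sigma^{p^r})\rho$ annihilates $Cl(\mc O_L)_p/(m/p^{k(s-1)})=Cl(\mc O_L)_p$, as desired. The explicit description $p^{k-r}=\max\{t_j\mid n_j=n_s\}$ of $r$ then follows by unwinding the definitions of the $M_i$, the $\mu_i$ and the jumps, as in \cite{chapdelaine2017annihilators}.

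The step I expect to be the main obstacle is the module-theoretic bookkeeping concentrated in the last two paragraphs: deducing rigorously from Lemma~\ref{lem:Zbasis} and the lemma after it that $\overline{\mc C_L}/\overline{\mc C_{L_r}}$ is cyclic over $\bb Z[\Gamma]$ with annihilator exactly that of $1-\sigma^{p^r}$ (which rests on the quotient being $\bb Z$-free of the right rank and on a surjection of free $\bb Z$-modules of equal finite rank being an isomorphism); choosing $V$ so that $z$ is simultaneously well defined, $\bb Z[\Gamma]$-linear, and zero on $V\cap K^\times$; and keeping careful track of the passages between $\bb Z[\Gamma]$-, $\bb Z_p[\Gamma]$- and $(\bb Z/m\bb Z)[\Gamma]$-coefficients so that $z(\alpha)$ comes out to be precisely $(1-\sigma^{p^r})\rho$ and not merely an associate of it. All the substantive inputs — the existence and semispecialness of $\alpha$, Lemma~\ref{lem:choiceofm}, and Theorems~\ref{thm:A}, \ref{thm:B} and \ref{thm:C} — are already available from the earlier parts of the paper, so the proof amounts to transporting the number-field arguments of \cite[Thm.~5.3]{greither2015annihilators} and \cite[Thm.~7.5]{chapdelaine2017annihilators} to the present setting.
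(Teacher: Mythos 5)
Your proposal is correct and follows essentially the same route as the paper's proof, which simply cites the argument of \cite[Thm.~7.5]{chapdelaine2017annihilators} (itself modelled on \cite[Thm.~5.3]{greither2015annihilators}) and observes that Theorems~\ref{thm:A}, \ref{thm:B} and \ref{thm:C} supply the necessary ingredients in the function-field setting; your reconstruction of that argument — identifying $\overline{\mc C_L}/\overline{\mc C_{L_r}}$ as cyclic over $\bb Z[\Gamma]$ with annihilator $\Ann(1-\sigma^{p^r})$ via Lemma~\ref{lem:Zbasis} and its successor, building $z$ from $\rho$ and this quotient, and feeding the semispecial unit $\alpha$ from Theorem~\ref{thm:19} into Theorem~\ref{thm:C} — is the intended one.
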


\begin{proof}
The proof of \cite[Thm.~7.5]{chapdelaine2017annihilators} can be used without any changes, since we have proved all the main ingredients in the Theorems \ref{thm:A}, \ref{thm:B} and \ref{thm:C}.
\end{proof}

\newpage

\bibliographystyle{abbrv}

\footnotesize

Pascal Stucky, \textsc{Department of Mathematics, Ludwig-Maximilians-Universit\"at M\"unchen}\par\nopagebreak
  \textit{E-mail address: }\texttt{stucky@math.lmu.de}

\end{document}